\tikzset{
every node/.style={circle, draw, inner sep=2pt}, 
every label/.style={rectangle, draw=none}
}
\newcommand{\bI}{I}
\newcommand{\bA}{A}
\newcommand{\calS}{\mathcal{S}}
\newcommand{\zr}[1]{Z^{(#1)}}
\newcommand{\zpr}[1]{Z_+^{(#1)}}
\newcommand{\nul}{\operatorname{null}}
\newcommand{\confl}{\operatorname{conf}_{\ell}}
\newcommand{\confe}{\operatorname{conf}_{\rm e}}
\newcommand{\Zhat}{\widehat{Z}}
\newcommand{\Zcheck}{\widecheck{Z}}
\newcommand{\Zpcheck}{\widecheck{Z}_+}
\newcommand{\trans}{^\top}
\newcommand{\oml}{{\bf m}}
\newcommand{\diam}{\operatorname{diam}}
\newcommand{\TT}{W} %%% notation of the 321-tree
\newcommand{\rank}{\operatorname{rank}}
\newcommand{\bzero}{{\bf 0}}
\newcommand{\ba}{{\bf a}}
\newcommand{\gloop}{G^{\rm loop}}
\newcommand{\tloop}{T^{\rm loop}}
\newcommand{\gO}{G^{\rm 0}}
\newtheorem{theorem}{Theorem}[section]
\newtheorem{lemma}[theorem]{Lemma}
\newtheorem{corollary}[theorem]{Corollary}
\newtheorem{proposition}[theorem]{Proposition}
\theoremstyle{definition}
\newtheorem{definition}[theorem]{Definition}
\newtheorem{example}[theorem]{Example}
\newtheorem{remark}[theorem]{Remark}
\newcommand{\ii}{{\rm i}}
\newcommand{\bx}{{\bf x}}
\newcommand{\calskew}{\mathcal{S}_-(G)}
\newcommand{\im}{\operatorname{Im}}
\begin{document}

\begin{frontmatter}

\title{A zero forcing technique for bounding sums of eigenvalue multiplicities}  
% \author{Franklin H.J. Kenter}

% \affiliation{organization={United States Naval Academy, Mathematics Department},%Department and Organization
%             addressline={kenter@usna.edu}, 
%             city={Annapolis},
%             postcode={21402}, 
%             state={MD},
%             country={USA}
%             }

% \author{Jephian C.-H. Lin}

% \affiliation{organization={United States Naval Academy, Department of Applied Mathematics},%Department and Organization
%             addressline={jephianlin@gmail.com}, 
%             city={Kaohsiung},
%             postcode={80424}, 
%             state={},
%             country={Taiwan}
%             }
            
% \author{Franklin Kenter \footnote{United States Naval Academy, Mathematics Deparment; Annapolis, MD, 21402; USA (kenter@usna.edu)} and  Jephian C.-H. Lin
% \footnote{Department of Applied Mathematics, National Sun Yat-sen University, Kaohsiung 80424, Taiwan; {\it Corresponding Author} (jephianlin@gmail.com)}}

\author[Kenteraddress]{Franklin H.J. Kenter}
\address[Kenteraddress]{United States Naval Academy, Mathematics Department, Annapolis, MD, 21401, U.S.A.}
\ead{kenter@usna.edu}

\author[Linaddress]{Jephian C.-H. Lin\corref{corres}}
\address[Linaddress]{Department of Applied Mathematics, National Sun Yat-sen University, Kaohsiung 80424, Taiwan}
\cortext[corres]{Corresponding author}
\ead{jephianlin@gmail.com}

\begin{abstract}
Given a graph $G$, one may ask: ``What sets of eigenvalues are possible over all weighted adjacency matrices of $G$?'' (The weight of an edge is positive or negative, while the diagonal entries can be any real numbers.)  This is known as the Inverse Eigenvalue Problem for graphs (IEP-$G$).  A mild relaxation of this question considers the multiplicity list instead of the exact eigenvalues themselves. That is, given a graph $G$ on $n$ vertices and an ordered partition $\oml= (m_1, \ldots, m_\ell)$ of $n$, is there a weighted adjacency matrix where the $i$-th distinct eigenvalue has multiplicity $m_i$?  This is known as the ordered multiplicity IEP-$G$.
Recent work solved the ordered multiplicity IEP-$G$ for all graphs on 6 vertices. 

In this work, we develop zero forcing methods for the ordered multiplicity IEP-$G$ in a multitude of different contexts. Namely, we utilize zero forcing parameters on powers of graphs to achieve bounds on consecutive multiplicities. We are able to provide general bounds on sums of multiplicities of eigenvalues for graphs. This includes new bounds on the the sums of multiplicities of consecutive eigenvalues as well as more specific bounds for trees. Using these results, we verify the previous results above regarding the IEP-$G$ on six vertices. In addition, applying our techniques to skew-symmetric matrices, we are able to determine all possible ordered multiplicity lists for skew-symmetric matrices for connected graphs on five vertices.
\end{abstract}

\begin{keyword}
 inverse eigenvalue problem for graphs (IEP-$G$),
 ordered multiplicity sequences, 
 powers of graphs, 
 skew-symmetric matrices, 
 zero forcing
\MSC{05C50, 05C57, 15A29, 15A42, 90C10}
\end{keyword}

\end{frontmatter}

\section{Introduction}
Given a graph $G$, the inverse eigenvalue problem asks the question: ``What (multi-)sets of eigenvalues are possible over all weighted adjacency matrices of $G$?''  Here, an edge weight is a nonzero value (positive or negative) and the diagonal entries can be any real number.

Zero forcing is a one-player game played on a graph whereby the player colors an initial set of vertices, then applies a propagation process during which colored  vertices may force uncolored vertices. The goal is to find minimum set of vertices such that eventually all of the vertices become colored. 
Many variations of zero forcing are used to bound the maximum nullity over certain classes of matrices associated with $G$. For instance, the original variation of the game was introduced as the result of an AIM workshop \cite{zf_aim} and helped determine the maximum nullity for symmetric graphs (allowing weighted diagonals) for all graphs of up to 7 vertices \cite{small}. Since then, a multitude of variations of zero forcing have been developed for other classes of graphs including but certainly not limited to skew-symmetric matrices \cite{zf_skew}, sign patterns \cite{zf_sign}, hypermatrices \cite{leslie_hyper}, positive semidefinite matrices \cite{zf_psd}, matrices with limited negative eigenvalues \cite{zf_q}, multigraphs \cite{cancun}, looped graphs \cite{square1, treewidth}, and of course, combinations of these cases \cite{zf_psd_multi}. Additionally, in some cases, refinements of these methods have been made by introducing additional rules such as odd-cycle conditions \cite{Zoc}. Indeed, zero forcing has proven such a popular topic in its own right that it has spawned variations that remove the linear-algebraic context altogether such as $k$-forcing \cite{kforcing}. Specific applications such as power domination \cite{powerdomination} have spawned their own lines of research as well.  Table \ref{tbl:summary} summarizes a few variations of zero forcing. 

Many of these variations live in isolation and only apply when the corresponding class of matrices arises or within a specific application. In this article, we demonstrate that many of these variations, when considered jointly, can help paint a much clearer, if not definitive picture, as to what eigenvalues are possible under a variety of different constraints.

The original motivation of this study was to study ordered eigenvalue multiplicity lists. An ordered eigenvalue multiplicity list for a matrix is a list $(m_1, \ldots, m_\ell)$ such that the $i$-th distinct eigenvalue has precisely multiplicity $m_i$. In \cite{ordered_iepg}, the authors tirelessly classify all allowable ordered eigenvalues multiplicity lists over all weighted adjacency matrices (with arbitrary diagonal) for all graphs up to 6 vertices.  (The cases for graphs up to $5$ vertices were done in \cite{BNSY14}, while cases for graphs up to $4$ vertices were done in \cite{minormult}.) Most of the cases therein are covered using a variety of parameters, including the zero forcing number, or specialized results. However, six exceptional graphs required significant additional analysis. In contrast, we develop a robust computational approach, using {\it just} the zero forcing numbers to validate the results in \cite{ordered_iepg}. Our results arrive at similar, but fewer, exceptional cases.

Our main approach is to 
apply a myriad of different, straight-forward, zero forcing parameters in order to exclude the possibility of certain multiplicity lists. In addition to using previously developed zero forcing parameters
%, we develop zero forcing parameters on multigraphs that arise from powers of graphs. 
we will repeatedly make use of 
combinations of zero forcing techniques not widely used before with a focus on powers of graphs. These parameters will provide an upper bound for sums of various elements in the multiplicity list. With these bounds, we construct a system of linear constraints in order to determine the region of feasibility which provides candidates for allowable ordered eigenvalues multiplicity lists. 

A zero forcing parameter, rigid linkage forcing, was recently introduced in \cite{rigid} to bound the total multiplicity of multiple eigenvalues. While our approach has the same goal of studying multiplicities of eigenvalues through zero forcing, they do not appear comparable or related. Though, one advantage of our approach is that it is more straight-forward to implement on a large scale as we do in Section \ref{subsec:six}.

Using these methods, we will be able recreate several previously known linear-algebraic results as well as new variations of these results solely using zero forcing parameters. Among these include:

\begin{itemize}
    \item Developing zero forcing processes on powers of graphs and relating these parameters to multiplicity lists (Section \ref{sec:power}, Theorems \ref{thm:main} and \ref{thm:mainzp}).
    \item Providing a uniform bound for multiplicities of eigenvalues for trees (Theorem \ref{thm:tree}).
    \item Developing an improved bound for the minimum number of distinct eigenvalues for a graph, $q(G)$, using zero forcing parameters (Theorem \ref{thm:qbound}).
     \item Providing a new argument, using only zero forcing parameters, that the tree in \cite[Fig~3.1]{barioli2004two} requires a number of distinct eigenvalues two more than its diameter (Theorem \ref{thm:bftree}).
    \item  Verifying that all ordered eigenvalues multiplicity lists for graphs with at most 6 vertices that are not listed in \cite{ordered_iepg} are indeed not possible (Subsection \ref{subsec:six}).
    \item Adapting the techniques from Section \ref{sec:power} to multiplicity lists for skew-symmetric matrices (Section \ref{sec:skew}).
    \item Determining all realizable multiplicity lists for connected graphs on 5 vertices and providing realizations for each  (\ref{sec:skewrealize}).
\end{itemize}

\section{Preliminaries}

We focus on studying finite, undirected graphs. However, in doing so, we may allow a graph to have multiple edges, a \emph{multigraph}; or have loops, a \emph{looped graph}; or both, \emph{a looped multigraph}. We will call a graph \emph{simple} if it is neither a multigraph nor a looped graph. A \emph{general graph} is a graph, looped graph, multigraph or looped multigraph.

For general graphs, we use the notation $i \sim j$ to denote that vertex $i$ is adjacent to vertex $j$. In the case of looped graphs, $i \sim i$ denotes a loop at $i$. For multigraphs, $i \sim_! j$ denotes that there is exactly one edge between $i$ and $j$; we call such an edge a \emph{singleton edge}. The \emph{underlying graph} of a general graph is a simple graph formed by removing all loops and/or removing all but one edge between every pair of adjacent vertices. 

Given a simple graph $G$, we define $\calS(G)$ to be the set of all $n \times n$ real symmetric matrices whereby the $ij$-entry, $i\neq j$, is nonzero whenever $i \sim j$ and zero otherwise.  The diagonal may be any combination of zero or nonzero entries. 

If $G$ is a looped graph, then $\calS(G)$ is defined to be the set of all $n \times n$ symmetric matrices whereby the $ij$-entry is nonzero whenever $i \sim j$ and the $ii$ diagonal entry must zero if there is no loop and must be nonzero if there is a loop at $i$. 

If $G$ is a multigraph, then $\calS(G)$ is defined to be the set of all $n \times n$ symmetric matrices whereby the $ij$-entry is nonzero whenever $i \sim_! j$ (that is, there is exactly one edge between $i$ and $j$), the $ij$-entry is zero whenever $i \not\sim j$ and $i\ne j$, and diagonal entries may be any combination of zero or nonzero entries. (Note that entries corresponding to multiedges that are not a singleton edge may be zero or nonzero.)

For looped multigraphs, $\calS(G)$ is the set of matrices meeting both conditions above. However, we will not discern between loops and ``multiloops'', so for all practical purposes, all loops are simple.

In reverse, for a matrix $A$, the {\it underlying graph} of $A$ is the simple graph $G$ for which $A \in \calS(G)$.

Given a simple graph $G$, a \emph{loop configuration} is a looped graph whose underlying graph is $G$. We will let $\gloop$ be the loop configuration with all possible loops; and we will let $\gO$ be the looped configuration with no loops.

Similarly, an \emph{edge configuration} of a multigraph $G$ is a simple graph $H$ obtained from $G$ such that for each pair of vertices connected by multiedges, either one or no edge is kept. 

We will use the notation $\confl(G)$ $\confe(G)$ to denote the all of the loop and edge configurations of $G$ respectively. 

For simple graphs, $\calS(G)$ is the disjoint union of $\calS(H)$ over all the loop configurations $H$ of $G$. Notably, $\calS(G) \supsetneq \calS(\gO)$; hence, going forward, we must be careful to specify whether $G$ is a simple graph or a looped graph. 

Similarly, for a multigraph $G$, $\calS(G)$ is the disjoint union of $\calS(H)$ over all edge configurations $H$ of $G$.

Given a graph, looped graph or multigraph, $G$, one may wish to understand the possible spectra (eigenvalues) of matrices in $\calS(G)$. This is a challenging task to say the least. However, a simpler problem is to determine the maximum nullity.  For a general graph $G$, we define the \emph{maximum} nullity as 
\[M(G) = \max_{\bA \in \calS(G)} \nul(\bA).\]

A slightly more challenging problem that we will focus on is to determine the possible multiplicities of eigenvalues given a prescribed order. For a real symmetric matrix $A$,  we say that $A$ has {\it ordered multiplicity list} $(m_1, m_2, \ldots, m_\ell)$ if $A$ has with distinct (necessarily real) eigenvalues $\lambda_1 < \lambda_ 2 <  \cdots < \lambda_\ell$ with corresponding multiplicities $m_1, m_2, \ldots, m_\ell$. Observe that for a general graph $G$ and any $A \in \calS(G)$, it must be the case that any ordered multiplicity list  $(m_1, m_2, \ldots, m_\ell)$ has $m_i \le M(G)$.

\subsection{Zero Forcing on Graphs, Multigraphs, and Looped Graphs}

The classical {\it zero forcing} process is a one-player game played on a simple graph $G$. The player selects some set of vertices $S$ to initially colored blue; all others are uncolored. After which, the color change rule is iteratively applied: If a blue vertex has exactly one uncolored neighbor, it ``{\it forces}'' (or colors) that neighbor to become blue. The rule is applied until no more forces can be made. A set $S$ is a {\it zero forcing set} if after iteratively applying the rule all the vertices of $G$ will eventually be colored blue.  The goal of the game is to find the smallest zero forcing set, the size of which is called the \emph{zero forcing number} of the graph $G$, denoted $Z(G)$.

For looped graphs, the game is similar; however, the color change rule slightly different. If $i$ has a loop, then if all but one vertex in the closed neighborhood of $i$ (including $i$ itself) is blue, then the neighborhood forces that vertex to be blue; the distinction from before is that $i$ can be forced by its own neighborhood. And if $i$ does not have a loop, then whenever all but one vertex in the open neighborhood of $i$ (i.e., excluding $i$) is colored, $i$ can force that vertex to be blue; the distinction from the color change rule for simple graphs is that if $i$ does not have a loop, then it can force without being colored.

In effect, the classical zero forcing rule for simple graphs only allows a force if a force would be possible over any possible loop configuration.

For multigraphs, we take the color change rule to be where $i$ can only force $j$ whenever $i\sim_! j$ (i.e., $i$ and $j$ have a singleton edge). 

\begin{remark}
Let $G$ be a simple graph and $H$ a loop configuration of $G$.  Then $Z(H)\leq Z(G)$ by definition.
\end{remark}

For simple graph $G$, we define the \emph{enhanced zero forcing number}  \[\Zhat(G) = \max_{H\in \confl (G)} Z(H).\]

\begin{remark}
Let $G$ be a multigraph and $H$ an edge configuration of $G$.  Then $Z(H)\leq Z(G)$ by definition.
\end{remark}

For a multigraph graph $G$, we have \[\Zcheck(G) = \max_{H\in \confe (G)} Z(H).\]

\begin{theorem}[Barioli et al.\ \cite{treewidth}]
\label{aimgroup} 
For a simple graph $G$, \[ M(G) \le \Zhat(G) \le Z(G). \] 
\end{theorem}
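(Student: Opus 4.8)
The plan is to prove the two inequalities separately, treating the right-hand one as essentially immediate and concentrating the real work on the left-hand one.

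For the upper bound $\Zhat(G) \le Z(G)$, I would simply invoke the Remark preceding the definition of $\Zhat$: every loop configuration $H \in \confl(G)$ satisfies $Z(H) \le Z(G)$, so taking the maximum over all such $H$ yields $\Zhat(G) = \max_{H \in \confl(G)} Z(H) \le Z(G)$. Nothing further is required here.

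For the lower bound $M(G) \le \Zhat(G)$, the first step is to exploit the fact recorded earlier that $\calS(G)$ is the disjoint union of $\calS(H)$ over the loop configurations $H$ of $G$. Thus any $A \in \calS(G)$ lies in exactly one $\calS(H)$, namely the $H$ whose loops match the nonzero diagonal entries of $A$. This reduces the claim to showing, for each looped graph $H$, that $M(H) \le Z(H)$; taking maxima over $H$ then gives $M(G) = \max_{H} M(H) \le \max_{H} Z(H) = \Zhat(G)$.

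The heart of the argument is the standard null-vector propagation lemma adapted to looped graphs: if $S$ is a zero forcing set of $H$ and $A \in \calS(H)$, then $\nul(A) \le |S|$. I would argue by contradiction. If $\nul(A) > |S|$, then the subspace of $\ker(A)$ vanishing on $S$ is nontrivial, so I may choose a nonzero $x \in \ker(A)$ with $x_i = 0$ for all $i \in S$. I then track the forcing process, maintaining the invariant that every blue vertex $i$ has $x_i = 0$, by matching each force to a row of the equation $Ax = 0$. When $i$ carries a loop, $A_{ii} \ne 0$ and the $i$-th row $A_{ii}x_i + \sum_{j \in N(i)} A_{ij} x_j = 0$ involves exactly the closed neighborhood of $i$; when $i$ carries no loop, $A_{ii} = 0$ and the row $\sum_{j \in N(i)} A_{ij} x_j = 0$ involves exactly the open neighborhood of $i$. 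In either case, if all but one entry among the relevant neighborhood is already zero, the lone remaining coefficient is nonzero (edges and present loops both force nonzero entries in $A$), so the forced vertex also receives $x$-value zero, preserving the invariant. Since $S$ is a zero forcing set, propagation eventually colors all of $H$, forcing $x = 0$ and contradicting $x \ne 0$. Hence $\nul(A) \le |S|$, and taking $S$ minimum gives $M(H) \le Z(H)$.

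The step I expect to demand the most care is precisely this matching of the looped color change rule to the two cases of the row equation: one must verify that the closed-versus-open-neighborhood distinction in the forcing rule corresponds exactly to whether $A_{ii}$ appears in the $i$-th equation, and that in every case the unique uncolored (forced) vertex has a nonzero coefficient so that its value is pinned to zero.
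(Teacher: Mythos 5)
Your proof is correct, but note that the paper itself offers no proof of this statement: it is quoted verbatim as an external result of Barioli et al.\ (the cited reference for looped graphs), so there is no in-paper argument to compare against. What you have reconstructed is essentially the standard argument from that literature. The right-hand inequality is exactly the paper's preceding Remark ($Z(H)\le Z(G)$ for every $H\in\confl(G)$, then take the maximum), and the left-hand inequality follows, as you say, from the decomposition of $\calS(G)$ as the disjoint union of $\calS(H)$ over loop configurations $H$, which gives $M(G)=\max_{H\in\confl(G)}M(H)$, reducing everything to the propagation lemma $\nul(A)\le |S|$ for $A\in\calS(H)$ and $S$ a zero forcing set of the looped graph $H$. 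Your matching of the two forcing rules to the two row types is the crux and is handled correctly: a loop at $i$ means $A_{ii}\ne 0$, so row $i$ of $Ax=\bzero$ is supported on the closed neighborhood of $i$ and mirrors the closed-neighborhood rule; no loop means $A_{ii}=0$, so the row is supported on the open neighborhood and mirrors the rule permitting $i$ to force while itself uncolored. In each case the lone surviving coefficient is nonzero (it corresponds to an edge or a present loop), so the invariant that blue vertices carry zero entries propagates to completion, forcing $x=\bzero$ and yielding the bound. The only cosmetic caution is to keep the quantifiers straight at the end: the lemma bounds $\nul(A)$ by $Z(H)$ for the particular $H$ containing $A$, and only after maximizing over $H$ on both sides do you get $M(G)\le\Zhat(G)$ --- which is exactly the order in which you argue it.
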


\begin{theorem}[see \cite{cancun}] 
For a  multigraph $G$, \[ M(G)  \le \Zcheck(G)  \le Z(G). \] 
\end{theorem}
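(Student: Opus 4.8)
The plan is to treat the two inequalities separately, reducing each to the already-established simple-graph case in Theorem~\ref{aimgroup} together with the structural facts about $\calS(G)$ recorded in the preliminaries.

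For the upper bound $\Zcheck(G) \le Z(G)$, I would simply invoke the remark stated immediately before the definition of $\Zcheck$: for every edge configuration $H \in \confe(G)$ one has $Z(H) \le Z(G)$. Taking the maximum over all $H \in \confe(G)$ on the left-hand side then gives $\Zcheck(G) = \max_{H \in \confe(G)} Z(H) \le Z(G)$ at once.

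For the lower bound $M(G) \le \Zcheck(G)$, the key is the disjoint-union decomposition $\calS(G) = \bigsqcup_{H \in \confe(G)} \calS(H)$ stated above. Since each matrix of $\calS(G)$ lies in exactly one $\calS(H)$ and the collection of edge configurations is finite and nonempty, the maximum nullity splits as
\[
M(G) = \max_{A \in \calS(G)} \nul(A) = \max_{H \in \confe(G)}\ \max_{A \in \calS(H)} \nul(A) = \max_{H \in \confe(G)} M(H).
\]
Each edge configuration $H$ is a \emph{simple} graph, so Theorem~\ref{aimgroup} applies and yields $M(H) \le \Zhat(H) \le Z(H)$, and in particular $M(H) \le Z(H)$. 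Substituting into the display gives $M(G) = \max_H M(H) \le \max_H Z(H) = \Zcheck(G)$, as desired.

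The only real subtlety—and the step I would verify most carefully—is that the decomposition $\calS(G) = \bigsqcup_H \calS(H)$ is genuinely a disjoint union compatible with matrix membership, because this is what licenses passing from the multigraph to its simple edge configurations and applying the known simple-graph inequality. Given $A \in \calS(G)$, the entries on multiedges are unconstrained (they may be zero or nonzero), so $A$ determines a unique edge configuration $H$ by keeping precisely those multiedges on which $A$ is nonzero; one then checks that $A$ satisfies the defining zero/nonzero pattern of $\calS(H)$ on singleton edges, on nonadjacent off-diagonal pairs, and on the diagonal. This matching is routine, and once it is in hand the whole statement reduces entirely to Theorem~\ref{aimgroup}.
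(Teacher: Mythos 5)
Your proposal is correct and matches the paper's intent: the paper disposes of this theorem in one sentence ("follows immediately from the definition of $\Zcheck(G)$"), and the details it is implicitly relying on are exactly the two facts you use — the remark that $Z(H)\le Z(G)$ for every edge configuration $H$, and the stated disjoint-union decomposition $\calS(G)=\bigsqcup_{H\in\confe(G)}\calS(H)$ combined with the simple-graph inequality $M(H)\le Z(H)$. Your extra verification that each $A\in\calS(G)$ lands in $\calS(H)$ for the unique configuration $H$ determined by its nonzero pattern is a faithful spelling-out of what the paper leaves implicit, not a different route.
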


Hogben showed $M(G)\leq Z(G)$ for multigraphs $G$  \cite{cancun}, but the fact that $M(G)\leq \Zcheck(G)\leq Z(G)$ follows immediately from the definition of $\Zcheck(G)$.

\subsection{Positive Semi-definite Forcing and other variants}

The variations of zero forcing mentioned previously focused on the type of graph. In contrast, there are zero forcing variants that are motivated by further restrictions on matrices in $\mathcal{S}(G).$ Let $\mathcal{S}_+(G)$ be the set of all (symmetric) positive semi-definite matrices within $\mathcal{S}(G)$.  The \emph{positive semidefinite maximum nullity} of $G$ is 
\[M_+(G) = \max_{\bA \in \mathcal{S}_+(G)} \nul(\bA).\]

 Barrioli et al.\ defined {\it positive semidefinite forcing} for simple graphs \cite{zf_psd}  which was extended by Ekstrand et al.\ for multigraphs \cite{zf_psd_multi}. This variant is the same as zero forcing for the different types of general graphs, except with a subtle change to the color change rule. Let $X$ be the set of colored vertices, then consider the induced subgraph on $V(G) - X$ with components $Y_1, \ldots, Y_\ell$. A vertex $u$ can force an uncolored vertex $v$ if it could do so within any of the induced subgraphs on $X \cup Y_1, X \cup Y_2$ $\ldots$ or $X \cup Y_\ell$. In other words, for $u$ to force a vertex $v$, $v$ needs only to be the only relevant uncolored neighbor of $u$ among the same uncolored component as $v$. For multigraphs, forces can only occur on singleton edges however multiedges are still considered for determining the connected components.

\begin{theorem}[Bariloli, et al.\ \cite{zf_psd} and Ekstrand, et al.\ \cite{zf_psd_multi}] 
For simple graphs and multigraphs $G$, 

\[ M_+(G) \le Z_+(G) \]
\end{theorem}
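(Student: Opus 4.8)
The plan is to reprove this standard inequality by a null-vector propagation argument, adapted to the positive semidefinite color change rule. Fix $\bA \in \mathcal{S}_+(G)$ attaining $\nul(\bA) = M_+(G)$, and let $Z$ be a minimum positive semidefinite zero forcing set, so that $|Z| = Z_+(G)$. I would show that the restriction map $\ker \bA \to \mathbb{R}^{Z}$ sending $\bx \mapsto \bx|_Z$ is injective; since its domain has dimension $\nul(\bA)$ and its codomain dimension $|Z|$, injectivity yields $M_+(G) = \nul(\bA) \le |Z| = Z_+(G)$. Injectivity in turn reduces to the invariant that any $\bx \in \ker \bA$ vanishing on the currently colored set continues to vanish on every vertex it forces.

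The crux is a lemma isolating exactly where positive semidefiniteness is used. Suppose $\bA\bx = \bzero$ and $\bx$ vanishes on the colored set $X$; let $Y_1,\dots,Y_\ell$ be the components of $G - X$ and write $\bx_k = \bx|_{Y_k}$. Because distinct $Y_i, Y_j$ lie in different components of $G-X$, there are no edges between them, so the off-diagonal blocks $\bA[Y_i,Y_j]$ vanish and $\bA$ is block-arrow shaped with respect to the partition $X, Y_1,\dots,Y_\ell$. Reading the rows of $\bA\bx=\bzero$ indexed by $Y_k$ (and using $\bx|_X = \bzero$) gives $\bA[Y_k,Y_k]\,\bx_k = \bzero$. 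Now let $\bx^{(k)}$ be the vector agreeing with $\bx$ on $Y_k$ and zero elsewhere. The previous identity forces $\bA\bx^{(k)}$ to be supported on $X$, whence $(\bx^{(k)})\trans \bA\, \bx^{(k)} = 0$ because $\bx^{(k)}$ is zero on $X$; positive semidefiniteness then upgrades this to $\bA\bx^{(k)} = \bzero$. In particular the cross-block term $\bA[X,Y_k]\,\bx_k = \bzero$, so each $\bx^{(k)}$ is itself a null vector of $\bA$.

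With $\bx^{(k)} \in \ker\bA$ in hand, the force is immediate. If a colored vertex $u$ forces $v \in Y_k$, then by the positive semidefinite rule $v$ is the unique neighbor of $u$ lying in $Y_k$. Examining entry $u$ of $\bA\bx^{(k)} = \bzero$ leaves only the term $\bA_{uv}\bx_v$, and $\bA_{uv}\neq 0$ since $u\sim v$, so $\bx_v = 0$. Thus the invariant is preserved at each force; since $Z$ is a positive semidefinite zero forcing set, the entire vertex set is eventually colored and $\bx = \bzero$, establishing injectivity and the bound.

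For multigraphs the argument is unchanged in structure: the components $Y_k$ are taken in the multigraph $G-X$, so that as before no edges (singleton or multiple) run between distinct components and the block-arrow structure persists; and since forces occur only across singleton edges, the forcing step again uses $\bA_{uv}\neq 0$. The one genuine obstacle is the middle paragraph: for a general symmetric matrix the cross term $\bA[X,Y_k]\,\bx_k$ need not vanish, so one could not localize the null vector to a single component, and the weaker positive semidefinite color change rule---which permits forcing even when $u$ retains uncolored neighbors in other components---would be unjustified. It is precisely the implication $\bx\trans\bA\bx = 0 \Rightarrow \bA\bx = \bzero$ for positive semidefinite $\bA$ that closes this gap.
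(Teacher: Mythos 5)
The paper itself offers no proof of this statement---it is imported verbatim as a known result of Barioli et al.\ and Ekstrand et al.---so there is no in-paper argument to compare against. Your proof is correct, and it is essentially the standard null-vector propagation argument from the cited literature. All the key steps check out: the absence of edges between distinct components of $G-X$ gives the block structure $\bA[Y_i,Y_j]=O$; the rows of $\bA\bx=\bzero$ indexed by $Y_k$ then give $\bA[Y_k,Y_k]\bx_k=\bzero$, so $\bA\bx^{(k)}$ is supported on $X$ and $(\bx^{(k)})\trans \bA\,\bx^{(k)}=0$; positive semidefiniteness (via $\bA=B\trans B$) upgrades this to $\bA\bx^{(k)}=\bzero$, which is exactly the localization that justifies the weaker PSD color change rule; and the forcing step then reads off $\bx_v=0$ from the single surviving term $\bA_{uv}\bx_v$. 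Your handling of the multigraph case is also right on the delicate point: multiedges still count as adjacencies for determining components and for blocking forces (so a multiedge neighbor of $u$ inside $Y_k$ prevents $v$ from being the unique neighbor there, which is needed since such an entry of $\bA$ may be nonzero), while the requirement that forces travel along singleton edges is precisely what guarantees $\bA_{uv}\neq 0$. Your closing remark correctly isolates the implication $\bx\trans\bA\bx=0 \Rightarrow \bA\bx=\bzero$ as the only place positive semidefiniteness enters, which is why the same scheme with the classical rule (components ignored) proves $M(G)\le Z(G)$ instead.
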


%We remark that for looped graphs, every positive corresponding semideinfinte matrix must have a nonzero diagonal entry for every nonisolated vertex. Hence, if $G$ is a looped graph that has a nonisolated vertex without a loop, $\mathcal{S}^+$ is empty.

Similarly, we may define 
\[\Zpcheck(G) = \max_{H\in \confe (G)} Z_+(H).\]
Thus, $M_+(G) \leq \Zpcheck(G) \leq Z_+(G)$.

It it worth remarking if $i \sim_! j$ in $G$, then for any matrix $\bA \in \mathcal{S}_+(G)$, it must be the case that $\bA_{ii}, \bA_{jj} \ne 0 $, as otherwise there is a  $2 \times 2$ principle submatrix with negative determinant. As a result, in the context of both for simple connected graph and positive semidefinite matrices and positive semidefinite forcing,  we can assume that every non-isolated vertex has loops.

Later in Section \ref{sec:skew}, we will consider {\it skew-forcing} where the matrices within $\calS(G)$ are restricted to skew-symmetric matrices. We will define the specific variations at that time.

\begin{table}
\begin{center}
    \begin{tabular}{| m{2.4cm} | m{1.5cm}| m{3.8cm} | m{3cm} |}
    \hline
    Name & Notation & Minimum Rank Problem & Color Change Rule  \\ \hline \hline
    
    Classical \cite{zf_aim} & $Z(G)$ & symmetric &  \\ \hline
    
    PSD forcing \cite{zf_psd} & $Z_+(G)$ & symmetric positive semi-definite  & forcing considers individual uncolored components \\ \hline
   
    Skew forcing \cite{zf_psd} & $Z_-(G)$ & skew-symmetric (or symmetric with 0 diagonal) & a vertex may force without being colored \\ \hline
    
    \end{tabular}
\end{center}
\caption{Summary of the different applications with their zero forcing variations.}
\label{tbl:summary}
\end{table}

\begin{proposition}
Let $G$ be a multigraph and $G'$ its (simple) underlying graph.  Suppose there is a minimum zero forcing set of $G'$, $S$ such that there are a sequence resulting forces to color all the vertices of $G$ that use only singleton edges in $G$.  Then, $S$ is also a zero forcing set of $G$, and $Z(G') = Z(G)$.
\end{proposition}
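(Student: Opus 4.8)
The plan is to establish the two conclusions separately: the inequality $Z(G')\le Z(G)$ comes essentially for free from the earlier remark on edge configurations, while the reverse inequality $Z(G)\le Z(G')$ is obtained by transporting the given forcing sequence from $G'$ to $G$.

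First I would record the key structural fact. Since $G'$ is the underlying graph of $G$, the two graphs have identical adjacency, so the open neighborhood $N_G(i)$ equals $N_{G'}(i)$ for every vertex $i$; multiedges do not change which vertices are adjacent. Consequently, at any stage of a forcing process with the same set $C$ of colored vertices, the condition ``$j$ is the only uncolored neighbor of $i$'' holds in $G$ if and only if it holds in $G'$. The \emph{only} way the two color change rules differ is that in $G$ a force $i\to j$ is additionally required to run along a singleton edge, i.e.\ $i\sim_! j$.

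Next I would show $S$ is a zero forcing set of $G$ by induction along the given chronology of forces. By hypothesis, the process started from $S$ in $G'$ colors all of $V(G)$ and every force in it uses a singleton edge of $G$; list these forces in order as $(i_1\to j_1),(i_2\to j_2),\ldots$. Inductively assume the first $t-1$ of them are legal in $G$ and yield the same colored set $C_t$ as in $G'$. Because $N_G(i_t)=N_{G'}(i_t)$ and the colored sets coincide, $j_t$ is the unique uncolored $G$-neighbor of $i_t$; since $i_t\sim_! j_t$ by hypothesis, the force $i_t\to j_t$ is legal in $G$ and produces the colored set $C_t\cup\{j_t\}$, matching $G'$. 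Hence the whole sequence runs in $G$ and colors every vertex, so $S$ is a zero forcing set of $G$ and $Z(G)\le |S| = Z(G')$.

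Finally, for the reverse inequality I would observe that $G'$ is itself an edge configuration of $G$, namely the one that keeps exactly one edge for each adjacent pair, so the earlier remark (with $H=G'$) gives $Z(G')\le Z(G)$. Combining the two inequalities yields $Z(G')=Z(G)$. I do not expect a genuine obstacle; the only point needing care is the bookkeeping in the induction, specifically verifying that the colored set stays synchronized between the $G$-process and the $G'$-process so that the neighborhood identity can be invoked at each step.
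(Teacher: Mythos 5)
Your proof is correct and takes essentially the same approach as the paper's: the singleton-edge forcing sequence transfers verbatim from $G'$ to $G$ (giving $Z(G)\le |S| = Z(G')$), and the reverse inequality comes from viewing $G'$ as an edge configuration of $G$ via the earlier remark. If anything, you are more careful than the paper, which compresses the transfer step to one line and states the second inequality as ``$Z(G)\leq Z(G')$ by definition'' (the direction there appears to be a typo, since what minimality of $S$ actually requires is exactly the $Z(G')\le Z(G)$ you derive).
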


\begin{proof}
Suppose there is a zero forcing process on $G'$ starting with a minimum zero forcing set $S$ and using only singleton edges for each force.  Then every force in this process is also a valid force in $G$, so $S$ is also a zero forcing set of $G$.  Since $Z(G)\leq Z(G')$ by definition, $S$ is a minimum zero forcing set for $G$ and $Z(G) = Z(G')$.
\end{proof}

\section{Power zero forcing} \label{sec:power}

One of our main approaches will be to consider {\it powers of graphs}.  

For a simple graph a {\it lazy walk} is a walk that may remain at a vertex at each step, and its length is the number of steps. Let $G$ be a simple graph and $r$ a positive integer.  We define the multigraph $\Gamma(G,r)$ on the vertex set $V(G)$ such that the number of edges between $i$ and $j$ is the number of lazy walks from $i$ to $j$ of length at most $r$. 

In some sense, $\Gamma(G,r)$ is a graph power of $G$ as a multigraph. However, we will not generally be concerned with the exact number of edges between two vertices. Rather, for each pair of vertices $i,j$, we only truly consider whether there is an edge between $i$ and $j$, whether $i$ and $j$ form a singleton edge, or whether they form a multiedge.

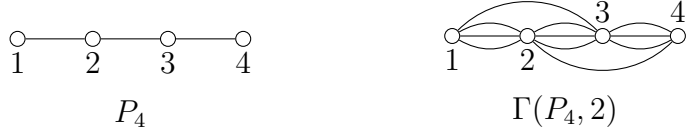
\begin{figure}
\begin{center}
\begin{tikzpicture}
\node[label={below:$1$}] (0) at (0,0) {};
\node[label={below:$2$}] (1) at (1,0) {};
\node[label={below:$3$}] (2) at (2,0) {};
\node[label={below:$4$}] (3) at (3,0) {};
\draw (0) -- (1) -- (2) -- (3);
\node[rectangle,draw=none] at (1.5,-1) {$P_4$};
\end{tikzpicture}
\hfil
\begin{tikzpicture}
\node[label={below:$1$}] (0) at (0,0) {};
\node[label={below:$2$}] (1) at (1,0) {};
\node[label={above:$3$}] (2) at (2,0) {};
\node[label={above:$4$}] (3) at (3,0) {};
\draw (0) -- (1);
\draw (0) to[bend left] (1);
\draw (0) to[bend right] (1);
\draw (1) -- (2);
\draw (1) to[bend left] (2);
\draw (1) to[bend right] (2);
\draw (2) -- (3);
\draw (2) to[bend left] (3);
\draw (2) to[bend right] (3);
\draw (0) to[bend left=45] (2);
\draw (1) to[bend right=45] (3);
\node[rectangle,draw=none] at (1.5,-1) {$\Gamma(P_4,2)$};
\end{tikzpicture}
\end{center}
\caption{An example of $G$ and $\Gamma(G,r)$, where $G=P_3$ and $r=2$.}
\label{fig:mulill}
\end{figure}

\begin{example}
Let $G = P_4$ be as shown in Figure~\ref{fig:mulill} and $r=2$.  Then there are three lazy walks of length at most $r$ from $1$ to $2$, namely, $(1,2)$, $(1,1,2)$, and $(1,2,2)$.  In contrast, there is only one lazy walk of length at most $r$ from $1$ to $3$, which is $(1,2,3)$, and there is no such lazy walk from $1$ to $4$.  Therefore, the graph $\Gamma(P_4,2)$ is as shown in Figure~\ref{fig:mulill}.  And the the edge configurations of $\Gamma(P_4,2)$ are the graph on the vertex set $\{1,2,3,4\}$ such that $\{1,3\}$ and $\{2,4\}$ are edges yet $\{1,4\}$ is not an edge.
\end{example}

\begin{remark}
Let $G$ be a graph and $\bA\in\calS(G)$.  Suppose $p(x)$ is a polynomial of degree $r$ with $r\geq 1$.  Then $p(\bA)$ is a matrix of $\calS(H)$ for some edge configuration  $H$ of $\Gamma(G,r)$. 
\end{remark}
     
\begin{definition}
Let $G$ be a simple graph.  Define \[\zr{r}(G) = \Zcheck(\Gamma(G,r)) \text{ and }
\zpr{r}(G) = \Zpcheck(\Gamma(G,r)).\]
\end{definition}

Note that $\zr{1}(G) = Z(G)$ and $\zpr{1}(G) = Z_+(G)$.

\begin{theorem}
\label{thm:main}
Let $G$ be a simple graph and $\bA\in\calS(G)$.  Suppose $m_1\geq \cdots \geq m_\ell$ are the eigenvalue multiplicities of $\bA$.  Then, for any $r=1,\ldots,\ell$,  
\[\sum_{i=1}^r m_i \leq \zr{r}(G).\]
\end{theorem}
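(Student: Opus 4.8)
The plan is to connect the sum $\sum_{i=1}^r m_i$ to the nullity of a matrix built from $\bA$, and then apply the maximum-nullity bound through $\Zcheck$ on the multigraph power $\Gamma(G,r)$. First I would observe that the top $r$ eigenvalue multiplicities correspond to $r$ distinct eigenvalues $\mu_1,\ldots,\mu_r$ of $\bA$ (the ones attaining the largest multiplicities, in some order). The key construction is to form the polynomial $p(x) = \prod_{k=1}^{r}(x - \mu_k)$, which has degree exactly $r$. Then $p(\bA)$ annihilates every eigenvector of $\bA$ whose eigenvalue lies in $\{\mu_1,\ldots,\mu_r\}$, so the nullity of $p(\bA)$ equals the sum of the multiplicities of those $r$ eigenvalues, namely $\sum_{i=1}^r m_i$. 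This is the heart of the argument: it converts a statement about summed multiplicities into a single nullity.

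Next I would invoke the \emph{Remark} preceding the definition of $\zr{r}$: since $p$ has degree $r \geq 1$ and $\bA \in \calS(G)$, the matrix $p(\bA)$ lies in $\calS(H)$ for some edge configuration $H$ of $\Gamma(G,r)$. Because $p(\bA)$ is symmetric (it is a polynomial in the symmetric matrix $\bA$), we have
\[
\sum_{i=1}^r m_i = \nul\bigl(p(\bA)\bigr) \leq M(H).
\]
From the multigraph bound $M(H) \le \Zcheck(\Gamma(G,r))$ — more precisely, $M(H) \le Z(H) \le \Zcheck(\Gamma(G,r))$ by the definition of $\Zcheck$ as the maximum of $Z$ over all edge configurations — I would conclude
\[
\sum_{i=1}^r m_i \leq \Zcheck(\Gamma(G,r)) = \zr{r}(G),
\]
which is exactly the claimed inequality.

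The main obstacle I anticipate is verifying carefully that $p(\bA) \in \calS(H)$ with $H$ an edge configuration of $\Gamma(G,r)$, and in particular matching the support pattern. One must check that the $ij$-entry of $p(\bA)$ is forced to be nonzero exactly where $\Gamma(G,r)$ has a \emph{singleton} edge (one lazy walk of length $\le r$), may be zero or nonzero where there is a multiedge, and is necessarily zero where $i \not\sim j$ in $\Gamma(G,r)$ — this is precisely the content of the cited Remark, so the burden is in confirming that the lazy-walk count governs which powers $\bA^s$ ($0 \le s \le r$) can contribute a nonzero entry and when cancellation is or is not possible. A secondary subtlety is the indexing: Theorem~\ref{thm:main} orders the multiplicities as $m_1 \ge \cdots \ge m_\ell$ (decreasing), so the "top $r$" really are the $r$ largest, and the polynomial $p$ should be chosen to vanish on exactly those eigenvalues; this choice is legitimate precisely because $r \le \ell$ guarantees there are at least $r$ distinct eigenvalues to select.
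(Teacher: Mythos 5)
Your proposal is correct and takes essentially the same approach as the paper's proof: the same annihilating polynomial $p(x)=\prod_{k=1}^{r}(x-\mu_k)$ built on the $r$ eigenvalues of largest multiplicity, the same appeal to the Remark placing $p(\bA)$ in $\calS(H)$ for some edge configuration $H$ of $\Gamma(G,r)$, and the same chain of inequalities $\nul(p(\bA)) \le Z(H) \le \Zcheck(\Gamma(G,r)) = \zr{r}(G)$. Your insertion of $M(H)$ as an intermediate bound is a harmless refinement, not a different argument.
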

\begin{proof}
Suppose $\bA$ has $q$ distinct eigenvalues $\lambda_1,\ldots,\lambda_q$ with multiplicities $m_1\geq \cdots \geq m_q$.  For a given $r=1,\ldots, n$, let $p(x)=(x-\lambda_1)\cdots(x-\lambda_r)$.  Thus, $p(\bA)$ is in $\calS(H)$ for some configuration $H$ of $\Gamma(G,r)$ and has nullity $\sum_{i=1}^rm_i$.  Therefore, 
\[\sum_{i=1}^r m_i = \nul(p(\bA)) \leq Z(H)\leq \zr{r}(G).\]
This completes the proof.
\end{proof}

\begin{example}
\label{ex:path}
Consider the path $P_n$ on vertices $\{v_1,\ldots,v_n\}$ in the path order.  Since $M(P_n)=1$, any matrix $A\in\calS(P_n)$ has $n$ distinct eigenvalue with multiplicities $m_1=\cdots =m_n=1$.  On the other hand, we claim that for any given $r\leq n$ and any edge configuration $H$ of $\Gamma(P_n,r)$, the set $B = \{1,\ldots,r\}$ is a zero forcing set of $H$.  To see this, first observe that $\{v_i,v_{i+r}\}$ is an edge in $H$ and $\{v_i,v_j\}$, $j\geq i+r+1$, is not an edge in $H$ regardless the choice of the edge configuration of $\Gamma(P_n,r)$.  Therefore, one may perform the forces $v_i\rightarrow v_{i+r}$ for $i=1,2,\ldots, n-r$ sequentially to color every vertex in the graph.  Therefore, 
\[r = \sum_{i=1}^r m_i \leq \zr{r}(P_n) \leq r\]
and the inequalities in Theorem~\ref{thm:main} are tight for any $r$.
\end{example}

% Example 2:

% For the cycle on $n$ vertices, $P_n$, for any pair of vertices exactly $d$ apart, for $1 \le d \le \lfloor (n-1)/2 \rfloor$ there is exactly one path of length $d$ between them. It follows that for $C_n^d$, $2d$ consecutive vertices suffice to force all any graph in $\mathcal{G}(C^d)$. Hence the $Z$-sequence of $C_n$ is $(2, \ldots, 2)$ (for even $n$) and $(2, \ldots, 2, 1)$ for odd $n$. (Ok, duh again!)

Suppose a matrix $\bA\in\calS(G)$ has eigenvalue multiplicities $m_1\geq \cdots \geq m_\ell$.  Now we have two upper bounds for $\sum_{i=1}^r m_i$.  One is the upper bound $\zr{r}(G)$ given by Theorem~\ref{thm:main}, and the other is the upper bound $rZ(G)$ given by the classical zero forcing number.  The following two examples shows that they are, in general, not comparable.

\begin{example}
\label{ex:star}
Let $K_{1,n-1}$ be the star on vertices $\{v_1,\ldots,v_n\}$ such that $v_n$ is the center.  The adjacency matrix of $K_{1,n-1}$ has multiplicities $m_1=n-2$ and $m_2=m_3=1$.  Since any configuration of $\Gamma(K_{1,n-1},2)$ contains at least an edge (e.g., $\{v_1,v_2\}$), $\zr{2}(G)\leq n-1$.  Thus, $m_1+m_2=n-1$ implies $\zr{2}(G) = n-1$.  In this case, the bound $\zr{2}(G)$ outperforms the bound $2Z(G)$.
\end{example}

\begin{figure}
\begin{center}
\begin{tikzpicture}
\node [label={below:$1$}] (1) at (0,0) {};
\node [label={below:$2$}] (2) at (1,0) {};
\node [label={below:$3$}] (3) at (2,-0.5) {};
\node [label={below:$4$}] (4) at (3,0) {};
\node [label={below:$5$}] (5) at (4,0) {};
\node [label={above:$6$}] (6) at (1,1) {};
\node [label={above:$7$}] (7) at (2,0.5) {};
\node [label={above:$8$}] (8) at (3,1) {};

\draw (1) -- (2) -- (3) -- (4) -- (5);
\draw (6) -- (7) -- (8);
\draw (2) -- (7) -- (4);
\draw (3) -- (7);

\node[rectangle,draw=none] at (2,-1.5) {$G$};
\end{tikzpicture}
\hfil
\begin{tikzpicture}
\node [label={below:$1$}] (1) at (0,0) {};
\node [label={[label distance=0.2cm]below:$2$}] (2) at (1,0) {};
\node [label={below:$3$}] (3) at (2,-0.5) {};
\node [label={[label distance=0.2cm]below:$4$}] (4) at (3,0) {};
\node [label={below:$5$}] (5) at (4,0) {};
\node [label={above:$6$}] (6) at (1.3,1.3) {};
\node [label={[label distance=0.2cm]above:$7$}] (7) at (2,0.5) {};
\node [label={above:$8$}] (8) at (2.7,1.3) {};

\draw[line width=2pt] (1) -- (2) -- (3) -- (4) -- (5);
\draw[line width=2pt] (6) -- (7) -- (8);
\draw[line width=2pt] (2) -- (7) -- (4);
\draw[line width=2pt] (3) -- (7);
\draw[line width=2pt] (2) -- (4);
\draw (1) -- (3) -- (5) -- (7) -- (1);
\draw (6) -- (2) -- (8) -- (4) -- (6) -- (3) -- (8) -- (6);

\node[rectangle,draw=none] at (2,-1.5) {$\Gamma(G,2)$};
\end{tikzpicture}
\end{center}
\caption{A graph $G$ and the corresponding $\Gamma(G,2)$, where a thick line means two or more multi-edges.}
\label{fig:diffdrop}
\end{figure}
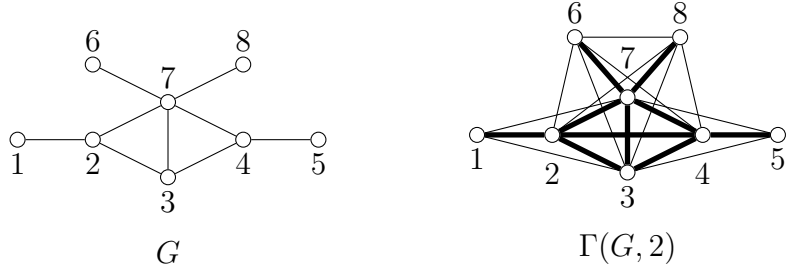

\begin{example}
Let $G$ be the graph shown in Figure~\ref{fig:diffdrop}.  Since $\{1,6\}$ is a zero forcing set of $G$, $Z(G)=2$.  On the other hand, one may calculate $\zr{2}(G) = 5$ since $Z(\Gamma(G,2))=5$, $Z(H)=5$, and 
\[Z(H)\leq \zr{2}(G)\leq Z(\Gamma(G,2)),\]
where $H$ is the configuration of $\Gamma(G,2)$ with all potential edges present.  Alternatively, the code \cite{mr_methematica} for computing $\zr{2}(G)$ is available.  Therefore, $2Z(G)\leq \zr{2}(G)$.
\end{example}

In general, Corollary~\ref{cor:partition} shows several upper bounds are available for the sum of multiplicities.

\begin{corollary}
\label{cor:partition}
Let $r\in [n]$ and ${r_1,\ldots, r_k}$ an integer partition of $r$.  Then 
\[\sum_{i=1}^r m_i\leq \sum_{i=1}^k \zr{r_i}(G).\]
\end{corollary}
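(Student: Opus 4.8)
The plan is to reduce the statement to $k$ separate applications of Theorem~\ref{thm:main}, one for each part of the partition, by exploiting the fact that the multiplicities $m_1 \geq \cdots \geq m_\ell$ are listed in nonincreasing order. First I would set $R_0 = 0$ and $R_t = r_1 + \cdots + r_t$ for $t = 1, \ldots, k$, so that $R_k = r$, and split the index set $\{1, \ldots, r\}$ into the $k$ consecutive blocks $\{R_{t-1}+1, \ldots, R_t\}$, the $t$-th block having exactly $r_t$ elements. The target sum then decomposes as $\sum_{i=1}^r m_i = \sum_{t=1}^k \sum_{i=R_{t-1}+1}^{R_t} m_i$.

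The key step is to bound each block by the corresponding ``top'' multiplicities. Because the sequence is nonincreasing, for every $j \geq 1$ we have $m_{R_{t-1}+j} \leq m_j$ (as $R_{t-1}+j \geq j$), and hence
\[\sum_{i=R_{t-1}+1}^{R_t} m_i = \sum_{j=1}^{r_t} m_{R_{t-1}+j} \leq \sum_{j=1}^{r_t} m_j.\]
Now Theorem~\ref{thm:main}, applied with $r = r_t$, gives $\sum_{j=1}^{r_t} m_j \leq \zr{r_t}(G)$. Summing these $k$ inequalities over $t = 1, \ldots, k$ yields the claimed bound $\sum_{i=1}^r m_i \leq \sum_{t=1}^k \zr{r_t}(G)$.

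The only mild subtlety to address is when $r$ exceeds the number of distinct eigenvalues $\ell$, in which case I would adopt the convention $m_i = 0$ for $i > \ell$; since each $\zr{r_t}(G) \geq 0$, the padded terms do not affect the inequality, and Theorem~\ref{thm:main} is invoked only on the genuine range. I do not anticipate any real obstacle here: the whole argument rests on the monotonicity of the sorted multiplicity list, which lets every block be dominated by a prefix of the list of the same length. The remainder is routine bookkeeping once that observation is in place.
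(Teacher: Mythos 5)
Your proof is correct and is exactly the intended argument: the paper states this corollary with only a $\qed$, and the natural proof it expects is precisely your decomposition of $\{1,\ldots,r\}$ into consecutive blocks of sizes $r_1,\ldots,r_k$, domination of each block by a prefix of the sorted list via monotonicity, and one application of Theorem~\ref{thm:main} per part. The only caveat is the boundary case $r>\ell$, where the padding convention alone is not quite enough (one would additionally need $\zr{\min(r_t,\ell)}(G)\leq \zr{r_t}(G)$, or to note that $\zr{r_t}(G)=n$ once $r_t$ exceeds the length of the longest unique shortest path), but this looseness is inherited from the paper's own statement ``$r\in[n]$'' rather than being a defect of your argument.
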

\hfill $\qed$

Up until now, we have focused solely on the arbitrary sum of multiplicities with no consideration of the order of the eigenvalues.
Let $\Lambda=\{\lambda_1,\ldots,\lambda_q\}$ be a set of distinct real numbers with $\lambda_1<\cdots<\lambda_q$.  Any subset $S$ of $\Lambda$ can be partitioned into maximal consecutive segments; that is, $S=\bigcup_{i}s_i$ such that each $s_i$ is of the form $\{\lambda_a, \lambda_{a+1}, \ldots, \lambda_b\}$ for some $a$ and $b$.  If a segment contains $\lambda_1$ or $\lambda_n$, then it is called a \emph{boundary} segment.  We define the \emph{evenly consecutive order} of a segment $s_i$ as
\[e_\Lambda(s_i) = \begin{cases}
|s_i| & \text{if }s_i\text{ is boundary}; \\
2\left\lceil \frac{|s_i|}{2}\right\rceil & \text{otherwise},
\end{cases}\]
and the \emph{evenly consecutive order} of $S$ as 
\[e_\Lambda(S) = \sum_{i=1}^q e_\Lambda(s_i).\]
Note that the formula $2\left\lceil \frac{k}{2}\right\rceil$ is simply the smallest even number greater than or equal to $k$.

\begin{example}
\label{ex:poly}
If $\Lambda = \{\lambda_1,\ldots,\lambda_{10}\}$ is a set of real numbers with $\lambda_1<\cdots<\lambda_{10}$ and $S=\{\lambda_1,\lambda_5,\lambda_6,\lambda_7\}$, then the maximal consecutive segments of $S$ are $s_1=\{\lambda_1\}$ and $s_2=\{\lambda_5,\lambda_6,\lambda_7\}$, where $s_1$ is boundary and $s_2$ is not.  Thus, we have $e_\Lambda(s_1) = 1$, $e_\Lambda(s_2) = 4$, and $e_\Lambda(S) = 5$.  Under this setting, one may construct a polynomial 
\[p(x) = (x-\lambda_1)(x-\lambda_5)(x-\lambda_6)(x-\lambda_7)^2\]
of degree $e_\Lambda(S) = 5$ such that $p(\lambda) = 0$ if $\lambda\in S$ and $p(\lambda)>0$ if $\lambda\in\Lambda\setminus S$.  The polynomial is shown in in Figure~\ref{fig:poly}.
\end{example}

\begin{figure}[h]
\begin{center}
\begin{tikzpicture}[scale=0.6]
\draw[-triangle 45] (-0,0) -- (11,0);
\foreach \i in {1,...,10} {
\pgfmathsetmacro{\x}{\i}
\draw (\x,-0.1) -- (\x,0.1);
\coordinate (\i) at (\x,0);
\node[rectangle,draw=none,below] at (\x,-0.2) {$\lambda_{\i}$};
}
\draw plot [samples=200,domain=0.5:5] function {-0.25*(x-1)*(x-5)};
\draw plot [samples=200,domain=5:6] function {1*(x-5)*(x-6)};
\draw plot [samples=200,domain=6:7.5] function {2*(x-6)*(x-7)*(x-7)};
% \draw (-3.5,-1) .. controls ([shift={(-0.3,-0.6)}]1) .. (1) 
% .. controls ++(1,2) and ([shift={(-1,2)}]5) .. (5)
% .. controls ++(0.1,-0.2) and ([shift={(-0.1,-0.2)}]6) .. (6)
% .. controls ++(0.1,0.5) and ([shift={(0.1,0.5)}]6) .. ([shift={(0.3,0.5)}]6)
% ;
\end{tikzpicture}    
\end{center}
\caption{An illustration of $p(x)$ in Example~\ref{ex:poly}}
\label{fig:poly}
\end{figure}
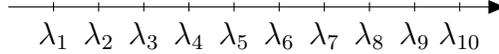

\begin{theorem}
\label{thm:mainzp}
Let $G$ be a graph and $\bA\in\calS(G)$ with distinct eigenvalues $\Lambda=\{\lambda_1,\ldots,\lambda_q\}$, $\lambda_1 < \cdots < \lambda_q$, and the corresponding multiplicities $m_1,\ldots, m_q$.  Then 
\[\sum_{\lambda_i\in S}m_i \leq \zpr{r}(G)\]
for any $S\subseteq\Lambda$ with $r = e_\Lambda(S)$.
\end{theorem}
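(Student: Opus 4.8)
The plan is to reduce the statement to the positive semidefinite forcing bound $M_+\le Z_+$ applied to a power of $G$, using the polynomial trick illustrated in Example~\ref{ex:poly} in place of the factor $(x-\lambda_1)\cdots(x-\lambda_r)$ used in Theorem~\ref{thm:main}. The target is a real polynomial $p(x)$ of degree exactly $r=e_\Lambda(S)$ that vanishes precisely on $S$ and is \emph{positive} on the remaining eigenvalues, i.e. $p(\lambda_i)=0$ for $\lambda_i\in S$ and $p(\lambda_i)>0$ for $\lambda_i\in\Lambda\setminus S$. Given such a $p$, the matrix $p(\bA)$ is symmetric with eigenvalues $p(\lambda_i)\ge 0$, hence positive semidefinite, and its nullity equals $\sum_{\lambda_i\in S}m_i$ because $p$ kills exactly the eigenvalues in $S$.

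First I would construct $p$ segment by segment. Decompose $S$ into its maximal consecutive segments $s_1,\ldots,s_k$ and attach to each a factor: for a boundary segment or an interior segment of even size, take the product of the simple linear factors $(x-\lambda_i)$ over $\lambda_i$ in the segment; for an interior segment of odd size, do the same but square one of the factors. By design the degree of the factor attached to $s_i$ equals $e_\Lambda(s_i)$ in every case, so after multiplying all factors together (and an overall sign $c=\pm1$ to be fixed below) we obtain $\deg p=\sum_i e_\Lambda(s_i)=e_\Lambda(S)=r$.

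The delicate point is the sign condition, which I would handle by a parity argument. At a non-$S$ eigenvalue $\lambda_j$ we have $\operatorname{sign}\big(p(\lambda_j)\big)=\operatorname{sign}(c)\cdot(-1)^{N_j}$, where $N_j$ is the number of odd-multiplicity roots of $p$ lying above $\lambda_j$. For two consecutive non-$S$ eigenvalues, the roots strictly between them constitute a single interior segment, and by construction each interior segment contributes an \emph{even} number of odd-multiplicity roots (all of them when the size is even, and all but the squared one when the size is odd). Hence the parities of all the $N_j$ agree, and a single choice of $\operatorname{sign}(c)$ makes $p(\lambda_j)>0$ simultaneously for every $\lambda_j\in\Lambda\setminus S$. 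Boundary segments carry no such constraint precisely because they do not sit between two non-$S$ eigenvalues, which is exactly why they are allotted the smaller value $|s_i|$ rather than $2\lceil|s_i|/2\rceil$.

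With $p$ in hand the conclusion is immediate. Since $\deg p=r$, the Remark preceding the definition of $\zr{r}(G)$ gives $p(\bA)\in\calS(H)$ for some edge configuration $H$ of $\Gamma(G,r)$, and positive semidefiniteness upgrades this to $p(\bA)\in\mathcal{S}_+(H)$. Therefore
\[\sum_{\lambda_i\in S}m_i=\nul\big(p(\bA)\big)\le M_+(H)\le Z_+(H)\le\Zpcheck(\Gamma(G,r))=\zpr{r}(G).\]
The only real obstacle is the polynomial construction itself, and within it the parity bookkeeping showing that the segment degrees $e_\Lambda(s_i)$ are enough to hold the sign of $p$ constant across all non-$S$ eigenvalues; once that is verified, the remaining steps parallel the proof of Theorem~\ref{thm:main} verbatim.
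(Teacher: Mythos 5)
Your proposal is correct and takes essentially the same route as the paper: the paper's proof also produces a polynomial $p$ of degree $e_\Lambda(S)$ vanishing on $S$ and positive on $\Lambda\setminus S$, applies it to $\bA$ to get a positive semidefinite matrix supported on an edge configuration $H$ of $\Gamma(G,r)$, and concludes via $\nul(p(\bA))\leq Z_+(H)\leq \zpr{r}(G)$. The only difference is that the paper merely asserts the existence of such a $p$ (with the construction illustrated in Example~\ref{ex:poly}), whereas you supply the segment-by-segment construction and the parity argument fixing the global sign; this fills in a detail the paper leaves implicit rather than departing from its method.
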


\begin{proof}
Let $\bA$ be a matrix in $\calS(G)$ with eigenvalues $\lambda_1<\cdots<\lambda_q$ and multiplicities $m_1,\ldots,m_q$, respectively.  For a given $r=1,\ldots,n$ and a subset $S$ of $\Lambda$ with $e_\Lambda(S) = r$, there is a polynomial $p(x)$ such that $p(\lambda)=0$ if $\lambda\in S$ and $p(\lambda)> 0$ if $\lambda\in\Lambda\setminus S$.  As a consequence, $p(\bA)$ is a positive semidefinite matrix in $\calS(\Gamma(G,r))$ and has nullity $\sum_{\lambda_i\in S} m_i$.  This means 
\[\sum_{\lambda_i\in S} m_i = \nul(p(\bA)) \leq Z_+(H) \leq \zpr{r}(G).\]
This completes the proof.
\end{proof}

\begin{example}
\label{ex:path+}
For similar reasons in Examples~\ref{ex:path}, $\zpr{r}(P_n) = r$ for any $r\leq n$.
\end{example}

Recall that $q(G)$ is the minimum number of distinct eigenvalues among $\bA\in\calS(G)$.

\begin{corollary}
\label{cor:qlbd}
Let $G$ be a graph on $n$ vertices.  Let $\widehat{r}$ be the smallest $r$ such that $\zr{r}(G)=n$.  Then $q(G)\geq\widehat{r}$.
\end{corollary}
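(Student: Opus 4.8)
The plan is to exhibit a single matrix witnessing that $q(G)$ is a member of the set $\{r : \zr{r}(G)=n\}$; minimality of $\widehat{r}$ then gives $q(G)\geq\widehat{r}$ immediately. The whole argument is a short deduction from Theorem~\ref{thm:main}.

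First I would fix a matrix $\bA\in\calS(G)$ realizing the minimum number of distinct eigenvalues, so that $\bA$ has exactly $q:=q(G)$ distinct eigenvalues with multiplicities $m_1\geq\cdots\geq m_q$. Since $\bA$ is an $n\times n$ real symmetric matrix, the multiplicities of its distinct eigenvalues sum to $n$, that is, $\sum_{i=1}^q m_i = n$. Next I would invoke Theorem~\ref{thm:main} with $r$ taken to be the number $q$ of distinct eigenvalues of $\bA$ (the largest admissible value of $r$ for this matrix), which yields
\[
n = \sum_{i=1}^q m_i \leq \zr{q}(G).
\]

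On the other hand, $\zr{q}(G)=\Zcheck(\Gamma(G,q))\leq Z(\Gamma(G,q))\leq n$, because a zero forcing set cannot exceed the entire vertex set and $\Gamma(G,q)$ has $n$ vertices. Combining the two inequalities forces $\zr{q}(G)=n$, so $q$ lies in $\{r : \zr{r}(G)=n\}$. Since $\widehat{r}$ is by definition the smallest element of this set, I conclude $q(G)=q\geq\widehat{r}$.

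I do not anticipate a genuine obstacle here: the result is essentially immediate once Theorem~\ref{thm:main} is available. The only two points that warrant care are recording that the multiplicities sum to exactly $n$, and noting the trivial bound $\zr{r}(G)\leq n$ so that equality (not merely $\zr{q}(G)\geq n$) is obtained. In particular, no monotonicity of $r\mapsto\zr{r}(G)$ is needed, since the proof only exhibits one valid value of $r$ rather than comparing the quantity across all $r$.
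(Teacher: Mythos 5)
Your proof is correct and is exactly the argument the paper leaves implicit (the corollary is stated with an immediate $\qed$, being a direct consequence of Theorem~\ref{thm:main}): apply Theorem~\ref{thm:main} with $r=q(G)$ to get $n=\sum_{i=1}^q m_i\leq \zr{q}(G)$, combine with the trivial bound $\zr{q}(G)\leq n$ to conclude $\zr{q}(G)=n$, and invoke minimality of $\widehat{r}$. Your explicit remarks that monotonicity in $r$ is not needed and that equality (not just $\geq n$) must be recorded are exactly the right points of care.
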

\hfill $\qed$

On a simple graph $G$, a path from vertex $i$ to vertex $j$ is called a \emph{unique shortest path}.  Let $\widehat{k}$ be the number of vertices on the longest unique shortest path between arbitrary two vertices of $G$.  It was known \cite{AACFMN13} that $q(G)\geq\widehat{k}$.  The next proposition explains that the $\widehat{r}$ in Corollary~\ref{cor:qlbd} is the same as $\widehat{k}$.

\begin{proposition}
Let $G$ be a graph on $n$ vertices and $r>0$ an integer.  Then the following are equivalent.
\begin{enumerate}[label={\rm(\arabic*)}]
    \item $\zr{r}(G) = n$.
    \item $\zpr{r}(G) = n$.
    \item The edge configurations of\/ $\Gamma(G,r)$ contain the empty graph.
    \item Any unique shortest path on $G$ contains at most $r$ vertices.
\end{enumerate}
\end{proposition}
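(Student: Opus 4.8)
The plan is to prove the four statements equivalent by establishing two separate blocks: $(1)\Leftrightarrow(3)\Leftrightarrow(2)$, which concerns when a forcing parameter attains its maximum possible value $n$, and $(3)\Leftrightarrow(4)$, which is a combinatorial fact about lazy walks in $\Gamma(G,r)$. Throughout I read a \emph{unique shortest path} from $i$ to $j$ as a geodesic that is the only shortest path between its endpoints, so that such a path has $d(i,j)+1$ vertices, where $d$ is the graph distance in $G$.

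For the first block, the fact I would isolate is that a simple graph $H$ on $n$ vertices satisfies $Z(H)=n$ if and only if $Z_+(H)=n$ if and only if $H$ has no edges. Indeed, if $H$ has an edge, pick a non-isolated vertex $v$; coloring all other $n-1$ vertices leaves some neighbor of $v$ with $v$ as its only uncolored neighbor, which forces $v$, so $Z(H)\le n-1$ and hence $Z_+(H)\le Z(H)\le n-1$. Conversely, the empty graph permits no force, so both numbers equal $n$. Since $\zr{r}(G)=\max_{H\in\confe(\Gamma(G,r))}Z(H)$ and every $Z(H)\le n$, the equality $\zr{r}(G)=n$ holds exactly when some edge configuration $H$ is the empty graph, which is $(3)$; the identical reasoning with $Z_+$ and $\zpr{r}$ yields $(2)\Leftrightarrow(3)$.

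The substance of the proposition is $(3)\Leftrightarrow(4)$. First I would note that, because an edge configuration must retain every singleton edge but may delete any multiedge, the empty graph is an edge configuration of $\Gamma(G,r)$ if and only if $\Gamma(G,r)$ has no singleton edge; and by definition $\{i,j\}$ is a singleton edge precisely when there is exactly one lazy walk from $i$ to $j$ of length at most $r$. I would then prove the trichotomy: the number of such walks is $0$ when $d(i,j)>r$; equals the number of shortest paths from $i$ to $j$ when $d(i,j)=r$ (a walk of length $r$ spanning distance $r$ admits no lazy step, hence is a genuine geodesic); and is at least $2$ when $d(i,j)<r$ (the geodesic and its lazification obtained by inserting a single stay at $i$ are already two distinct walks of length at most $r$). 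Consequently a singleton edge occurs exactly for pairs with $d(i,j)=r$ that are joined by a unique shortest path.

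It then remains to show that such a pair exists if and only if some unique shortest path has more than $r$ vertices, i.e.\ realizes distance at least $r$. One direction is immediate, since distance $r$ gives $r+1>r$ vertices. For the converse, given a unique shortest path $v_0,\ldots,v_d$ with $d\ge r$, I would check that its initial segment $v_0,\ldots,v_r$ is the unique shortest path between $v_0$ and $v_r$: any other geodesic from $v_0$ to $v_r$, concatenated with $v_r,\ldots,v_d$, would be a walk of length $d=d(v_0,v_d)$, hence a path, and thus a second shortest path from $v_0$ to $v_d$, contradicting uniqueness. This produces a pair at distance exactly $r$ and closes $(3)\Leftrightarrow(4)$. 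I expect the main obstacle to be the lazy-walk count, especially the rigorous verification that distance $r$ cannot be realized in $r$ steps except by a stay-free geodesic; the concatenation argument is the secondary point of care, where one must confirm that the new walk is a genuinely distinct path rather than a mere walk.
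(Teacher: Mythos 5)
Your proof is correct and takes essentially the same route as the paper's: (1)$\Leftrightarrow$(2)$\Leftrightarrow$(3) follows from the observation that $Z(H)=|V(H)|$ iff $Z_+(H)=|V(H)|$ iff $H$ is empty, and (3)$\Leftrightarrow$(4) follows by identifying the singleton edges of $\Gamma(G,r)$ with pairs of vertices at distance exactly $r$ joined by a unique shortest path. Your write-up is in fact more careful than the paper's, which asserts the lazy-walk counts and the sub-path property of unique shortest paths (your concatenation argument) without proof.
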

\begin{proof}
Note that $Z(H) = |V(H)|$ if and only if $Z_+(H) = |V(H)|$, and these two conditions are equivalent to $H$ is an empty graph.  Therefore, (1), (2), and (3) are equivalent.  

Suppose the longest unique shortest path on $G$ is on $\ell$ vertices, namely, $v_1,\ldots,v_\ell$.  For any $k\leq \ell-1$, the number of lazy walks from $v_1$ to $v_{k+1}$ of length at most $k$ is $1$, so there is a singleton edge between $v_1$ and $v_{k+1}$ in $\Gamma(G,k)$.  If $k\geq \ell$, then between any $i,j\in V(G)$, there are at least two lazy walks of length at most $k$, so the empty graph is a configuration of $\Gamma(G,k)$.  Conversely, if $\Gamma(G,r)$ contains the empty graph, then by definition, there is no unique shortest path of length $r$ or less. 
\end{proof}

\begin{theorem}
\label{thm:tree}
Let $T$ be a tree. Let $L_1$ be set of leaves of $T$ and $\ell_1=|L_1|$. Let $L_j$ be the set of vertices whose shortest distance to a leaf is $j-1$ and denote $\ell_i=|L_i|$.  Then,
\[\zr{k}(T)\leq \sum_{i=1}^k \ell_i - 1\]
for which $L_k$ is nonempty.
\end{theorem}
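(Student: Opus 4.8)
The plan is to prove the statement in its purely combinatorial form. Since $\zr{k}(T)=\Zcheck(\Gamma(T,k))=\max_{H}Z(H)$, where $H$ ranges over the edge configurations of $\Gamma(T,k)$, it suffices to produce a single vertex set $B$ with $|B|=\sum_{i=1}^{k}\ell_i-1$ that is a zero forcing set of \emph{every} such $H$; then $Z(H)\le |B|$ for all $H$ and the bound follows. Write $V_k=L_1\cup\cdots\cup L_k$, the set of vertices whose distance to the nearest leaf is at most $k-1$, so that $|V_k|=\sum_{i=1}^{k}\ell_i$ and the target size is $|V_k|-1$. The structural feature I would exploit throughout is that in a tree any two vertices $i,j$ with $d(i,j)=k$ are joined by a unique lazy walk of length at most $k$, hence by a \emph{singleton} edge of $\Gamma(T,k)$ that survives in every configuration $H$; pairs at distance $<k$ give multiedges (optional in $H$), and pairs at distance $>k$ give no edge. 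The hypothesis $L_k\neq\emptyset$ simply delimits the range of $k$ for which the layered count $\sum_{i=1}^{k}\ell_i$ is meaningful.

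First I would set $B=V_k\setminus\{w\}$ for a suitably chosen leaf $w$; when $k=1$ this is exactly the classical ``all leaves but one'' zero forcing set of a tree. The intended propagation is a sweep from the leaves inward along the guaranteed singleton edges. For a vertex $z$ of depth $d\ge k$ with nearest leaf $\ell_z$, consider the geodesic $\ell_z=p_0,p_1,\ldots,p_d=z$; each $p_i$ with $i\le k-1$ has depth at most $i\le k-1$, so the shallow initial segment $p_0,\ldots,p_{k-1}$ lies in $V_k$ and (apart from $w$) is already blue, while $d(p_{j-k},p_j)=k$ for every $j\ge k$. One then colors $p_k,p_{k+1},\ldots$ in turn via the singleton forces $p_{j-k}\to p_j$. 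Carrying this out for all geodesics, in order of increasing depth, should color all of $V(T)\setminus V_k$ together with the single omitted vertex $w$, in direct analogy with the length-$k$ sweep that fills the path in Example~\ref{ex:path}.

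The two points requiring care, and where I expect the main obstacle to lie, are robustness against the optional short edges and the treatment of branch vertices. A force $p_{j-k}\to p_j$ along a guaranteed singleton edge is legal in a configuration $H$ only if $p_j$ is the unique uncolored $H$-neighbor of $p_{j-k}$, so any optional edge joining $p_{j-k}$ to an uncolored vertex within distance $k$ would block it. I would control this by processing vertices in order of increasing depth and arguing that, because the whole shallow collar $V_k$ is seeded, every vertex lying within distance $k$ of the forcing vertex is already blue at the moment of the force; this keeps the force valid whatever optional edges $H$ contains, at the price of a forcing order that may depend on $H$. Branch vertices are the crux: where several geodesics meet, the forcing vertex has distance-$k$ reach into more than one subtree, so the forces must be ordered so that all but one of its uncolored close neighbors are resolved first, and one must verify that the seeded collar always furnishes enough blue vertices to do so. Finally, one must confirm the ``one free vertex'' phenomenon, namely that dropping a single leaf $w$ never stalls the sweep because $w$ is recovered by a later force; establishing this uniformly over all configurations is the part I would expect to demand the most careful bookkeeping.
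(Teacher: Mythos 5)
Your reduction and structural groundwork are exactly right and match the paper's: one universal set $B$ suffices because $\zr{k}(T)=\Zcheck(\Gamma(T,k))$, and in a tree every pair at distance exactly $k$ is joined by a singleton edge present in \emph{every} configuration, while pairs at distance less than $k$ give only optional multiedges. But the one propagation claim you actually assert is false as stated, and the points you defer are the entire content of the theorem. You claim that, sweeping in order of increasing depth, ``every vertex lying within distance $k$ of the forcing vertex is already blue at the moment of the force.'' This fails at ties of equal depth: take $k=2$ and the tree with center $c$ adjacent to a leaf $y$ and to $x,x'$, with pendant paths $x\hbox{--}s_1\hbox{--}s_2$ and $x'\hbox{--}t_1\hbox{--}t_2$. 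Here $x$ and $x'$ both have depth $2$, and both are at distance exactly $2$ from the seeded leaf $y$, so $\{y,x\}$ and $\{y,x'\}$ are guaranteed singleton edges in every $H$ and $y$ can never be the first to force either of them. Whether such ties can always be broken by other forcers (here $s_2\rightarrow x$ and $t_2\rightarrow x'$), and whether the omitted leaf $w$ never stalls the sweep before it is recovered, are precisely what you leave as ``careful bookkeeping'' to be verified; without them there is no proof, only a plan.

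The paper supplies the missing mechanism by making a different choice of omitted set: instead of a single leaf, it deletes an entire $k$-vertex path $P=v_k,\ldots,v_1$ from a vertex $v_k\in L_k$ to its nearest leaf $v_1$ (so it in fact proves the stronger bound $\sum_{i=1}^k \ell_i - k$). The components of $T-P$ are colored layer by layer much as in your sweep, and then $P$ itself is colored \emph{last}, in the order $v_k,v_{k-1},\ldots,v_1$, by vertices $u_k,u_{k-1},\ldots,u_1$ lying on a path on the far side of $v_k$, each $u_i$ at distance exactly $k$ from $v_i$; the existence of $u_k$ (and hence the whole far-side path) is exactly where the hypothesis $v_k\in L_k$ is used, so $L_k\neq\emptyset$ does more work than ``delimiting the range of $k$.'' This ``color the omitted set last, from the far side'' step is what simultaneously resolves your branch-vertex ordering problem and the recovery of the omitted vertex. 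Incidentally, once the paper's set $\left(\bigcup_{i=1}^k L_i\right)\setminus V(P)$ is known to force every configuration, your set with $w=v_1$ is a superset of it and therefore forces as well, so your statement is true; but as a self-contained argument your proposal has a genuine gap at the two points above.
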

% , for any $\bA \in \mathcal{S}(G)$ with eigenvalue multiplicities, $m_1 \ge \cdots \ge m_q$ and $k$ for which $L_k$ is nonempty,  \[ \sum_{i=1}^k m_i \leq \sum_{i=1}^k (\ell_i-1).\]

\begin{proof}
We show that $\zr{k}(T)\leq \sum_{i=1}^k \ell_i - 1$ by carefully choosing a path $P$ with $k$ vertices and showing $ \left( \bigcup_{i=1}^k L_i \right)  \setminus V(P)$  is a zero forcing set for any edge configuration of $\Gamma(T,k)$.

Pick a vertex $v_k$ in $L_k$.  By the definition of $L_k$, there is a path $P$ between $v_k$ and a leaf $v_1$ of distance $k-1$.  Label $V(P)$ by $v_k, \ldots, v_1$ following the path order.

Now, pick an arbitrary edge configuration $H$ of $\Gamma(G,k)$ and color every vertices in $\left ( \bigcup_{i=1}^k L_i \right) \setminus V(P)$ blue.

\medskip

{Claim:} All components of $T - P$ can be colored in $H$.

\medskip

{\it Proof of Claim:}
Choose a component of $T-P$, and call it $T'$.  Inductively, for any $j\geq k$, if vertices in $\bigcup_{i=1}^j L_i \cap T'$  are blue, then $L_{j+1} \cap T'$ may turn blue in the next step.  Let $x\in L_{j+1} \cap T'$ be a white vertex.  Then $x$ has a neighbor $y\in L_{j+1-k} \cap T'$ in $H$.  By the inductive hypothesis, $y$ is blue and every neighbor of $y$ is blue except for $x$, so $y$ may force $x$ to blue.  Therefore, $L_{k+1} \cap T',L_{k+2} \cap T',\ldots$ will be blue and eventually every vertex in $T'$ is blue. The completes the proof of the claim. \hfill $\triangle$

\medskip

Since all vertices except for $v_1, \ldots, v_k$ can necessarily be colored, it remains to show that $P$ itself can be colored. 

For $v_k$, there must be a vertex, $u_k$, such that $d(v_k,u_k)= k$ and $d(v_{k-1},u_k) = k+1$ and $u_k$ is colored (if not, then $v_k$ is less than distance $k$ away from a leaf, and hence not in $L_k$.) Therefore, all vertices of within distance $k$ of $u_k$ are colored, except for $v_k$ and $u_k$ forces $v_k$. From there, consider the path, $u_k, u_{k-1}, u_{k-2}, \ldots u_{1}, v_{k}$ in $T$. Necessarily, since $T$ is a tree, $u_i$ is exactly distance $k$ from $v_i$ and all other vertices within distance $k$ of $u_i$ are colored (otherwise, $u_k$ would not be distance k from $v_k$ in $T$). Hence, inductively, we have that, starting with $i=k$ as above and decrementing $i$, $u_i$ forces $v_i$. This completes the proof.
\end{proof}

We remark that for $i=1$, Theorem \ref{thm:tree} says that the maximum nullity of a tree is at most the number of leaves minus one. In contrast, it is known that the maximum nullity is exactly the path-cover number of the tree. In which case, Theorem \ref{thm:tree} can be off by a factor of 2. Hence, for most trees, it is also likely that for higher values of $k$, Theorem \ref{thm:tree} does not achieve equality for most trees.

\begin{proposition}
Let $G$ be a graph on $n$ vertices.  The following are equivalent.
\begin{enumerate}[label={\rm(\arabic*)}]
\item $\zr{r}(G) = r$ for some $r$.
\item $\zpr{r}(G) = r$ for some $r$.
\item $q(G) = n$
\item $M(G) = 1$
\item $G$ is a path.
\end{enumerate}
\end{proposition}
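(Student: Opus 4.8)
The plan is to close the cycle $(5)\Rightarrow(1)\Rightarrow(3)\Leftrightarrow(4)\Rightarrow(5)$ and to splice $(2)$ into the same equivalence class through $(5)\Rightarrow(2)\Rightarrow(3)$. The two implications out of $(5)$ are essentially free: Example~\ref{ex:path} gives $\zr{r}(P_n)=r$ and Example~\ref{ex:path+} gives $\zpr{r}(P_n)=r$ for every admissible $r$, so a path satisfies both $(1)$ and $(2)$.

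For $(1)\Rightarrow(3)$ I would argue one matrix at a time. Fix $\bA\in\calS(G)$ and let $q$ be its number of distinct eigenvalues, with multiplicities $m_1\ge\cdots\ge m_q$ whose sum is $n$. Applying Theorem~\ref{thm:main} at $r=q$ and using the hypothesis $\zr{q}(G)=q$ gives $n=\sum_{i=1}^{q}m_i\le\zr{q}(G)=q$; since always $q\le n$, we get $q=n$. As $\bA$ is arbitrary, $q(G)=n$. The proof of $(2)\Rightarrow(3)$ is the exact analogue: take $S=\Lambda$, which is a single boundary segment with $e_\Lambda(\Lambda)=q$, and feed it to Theorem~\ref{thm:mainzp} to obtain $n\le\zpr{q}(G)=q$, again forcing $q=n$ for every matrix.

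The equivalence $(3)\Leftrightarrow(4)$ rests on a single observation: for any $\bA\in\calS(G)$ and any eigenvalue $\lambda$, the matrix $\bA-\lambda\bI$ still lies in $\calS(G)$ because the diagonal is unconstrained, and its nullity equals the multiplicity of $\lambda$ in $\bA$. Hence the largest eigenvalue multiplicity occurring anywhere in $\calS(G)$ is exactly $M(G)$. If $M(G)=1$, then every eigenvalue of every matrix is simple, so every matrix has $n$ distinct eigenvalues and $q(G)=n$; conversely $q(G)=n$ forces every matrix (the minimizer included, and hence all, since $q$ is bounded above by $n$) to have $n$ distinct eigenvalues, so all multiplicities equal $1$ and $M(G)=1$ (note $M(G)\ge 1$ always, as a diagonal shift produces a singular matrix in $\calS(G)$).

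The remaining step $(4)\Rightarrow(5)$ is where the real work lies. First, $M(G)=1$ forces $G$ to be connected, since a graph with components $G_1,\dots,G_c$ has $M(G)=\sum_i M(G_i)\ge c$. It then remains to prove that a connected graph with $M(G)=1$ — equivalently, with minimum rank $n-1$ — is a path, which is the classical characterization of graphs of maximum nullity one; I would invoke this directly. I expect this implication to be the main obstacle: all the forward directions are immediate from Theorems~\ref{thm:main} and~\ref{thm:mainzp} together with the diagonal-shift observation, whereas excluding every connected non-path requires either that external minimum-rank result or a separate case analysis (a vertex of degree at least $3$, and separately a cycle, each producing a matrix of nullity $2$).
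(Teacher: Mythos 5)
Most of your architecture coincides with the paper's proof: the paper also gets $(5)\Rightarrow(1),(2)$ from Examples~\ref{ex:path} and~\ref{ex:path+}, treats $(3)\Leftrightarrow(4)$ as essentially definitional (your diagonal-shift argument is a fine expansion of this), and invokes the known characterization of graphs with maximum nullity one (citing \cite{gSAP}) for $(4)\Leftrightarrow(5)$. However, your implications $(1)\Rightarrow(3)$ and $(2)\Rightarrow(3)$ contain a genuine logical gap. Conditions (1) and (2) are \emph{existential}: they hand you one particular value $r_0$ with $\zr{r_0}(G)=r_0$ (resp.\ $\zpr{r_0}(G)=r_0$). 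In your argument you fix an arbitrary $\bA\in\calS(G)$, let $q$ be \emph{its} number of distinct eigenvalues, and then write ``using the hypothesis $\zr{q}(G)=q$.'' The hypothesis says nothing about $\zr{q}(G)$ for that $q$, which varies with $\bA$ and need not equal $r_0$; no monotonicity in $r$ is proved (or available in the paper) that would transfer $\zr{r_0}(G)=r_0$ to other indices. The identical quantifier misuse occurs in your $(2)\Rightarrow(3)$ step with $\zpr{q}(G)=q$. As written, these two implications are not proved.

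The paper avoids this by working at the \emph{fixed} $r$ from the hypothesis and by targeting (4) rather than (3): for a matrix $\bA\in\calS(G)$ having at least $r$ distinct eigenvalues, Theorem~\ref{thm:main} applied at that fixed $r$ gives $r\le m_1+\cdots+m_r\le \zr{r}(G)=r$ for the $r$ largest multiplicities (each $m_i\ge 1$), forcing $m_1=\cdots=m_r=1$; in particular the largest multiplicity of $\bA$ is $1$, and since $\bA$ is arbitrary this yields $M(G)=1$ directly, i.e.\ condition (4), with no need for the hypothesis at $r=q$. This is how your step should be repaired. Be aware, though, of a residual subtlety that the paper's own write-up also glosses over: a matrix with \emph{fewer} than $r$ distinct eigenvalues cannot be fed into Theorem~\ref{thm:main} at $r$, so that case needs separate treatment; indeed, for $r=n$ every graph satisfies $\zr{n}(G)=n$ (in $\Gamma(G,n)$ no pair of vertices is joined by a singleton edge, so the empty graph is an edge configuration), which shows the hypothesis ``for some $r$'' is only meaningful when $r$ is suitably small. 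Your quantifier error is independent of, and in addition to, that shared issue.
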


\begin{proof}
By definition, (3) and (4) are equivalent.  It is known that (4) and (5) are equivalent; see, e.g., \cite{gSAP}.  If $G$ is a path, then (1) and (2) are true by Examples~\ref{ex:path} and \ref{ex:path+}.  

Suppose $\zpr{r}(G) = r$ for some r.  Let $m_1, \ldots, m_r$ be any $r$ eigenvalue multiplicities of a matrix $A\in\calS(G)$.  Since eigenvalue multiplicities are at least one, 
\[r\leq \sum_{i=1}^r m_i \leq \zpr{r}(G)\leq \zr{r}(G) = r\]
and thus $\zpr{r}(G) = r$.  Moreover, since the choices of $m_1,\ldots,m_r$ and $A\in\calS(G)$ are arbitrary, $M(G)=1$.
\end{proof}

\begin{theorem}
Let $G$ be a connected graph on $n$ vertices.  Then the following are equivalent.
\begin{enumerate}[label={\rm(\arabic*)}]
\item $\zr{2}(G)\leq 3$.
\item $q(G) \geq n-1$.
\item $G$ is either a path with an extra edge joining two vertices of distance two, a path with a leaf on an internal vertex, or a path.
\end{enumerate}
\end{theorem}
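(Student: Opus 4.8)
The plan is to prove the three implications (3) $\Rightarrow$ (1), (1) $\Rightarrow$ (2), and (2) $\Rightarrow$ (3), which together close the equivalence. The implication (3) $\Rightarrow$ (1) is a finite, explicit zero forcing computation. For each family I would analyze $\Gamma(G,2)$, recalling that two vertices adjacent in $G$ give a multiedge, two vertices at distance two with a single common neighbor give a singleton edge, and every other pair is either a multiedge (two or more common neighbors) or a non-edge. For the path this is exactly Example~\ref{ex:path}, giving $\zr{2}(P_n)=2$. For the triangle-with-tails graph and for the spider $S(1,a,b)$ (a path with a leaf on an internal vertex) I would exhibit, for every edge configuration $H$ of $\Gamma(G,2)$, a zero forcing set of size three — a suitable choice of the branch or triangle vertices together with a path endpoint — and verify that the forces propagate no matter which optional edges are present, yielding $\zr{2}(G)\le 3$.

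The implication (1) $\Rightarrow$ (2) is immediate from Theorem~\ref{thm:main}. Let $\bA\in\calS(G)$ have multiplicities $m_1\ge m_2\ge\cdots$; then $m_1+m_2\le\zr{2}(G)\le 3$. If $G$ has an edge there are at least two distinct eigenvalues, so $m_2\ge 1$ and hence $m_1\le 2$; thus at most one eigenvalue has multiplicity two and none has multiplicity three or more. Consequently $\sum_i(m_i-1)\le 1$, so $\bA$ has at least $n-1$ distinct eigenvalues, and since $\bA$ was arbitrary, $q(G)\ge n-1$.

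The implication (2) $\Rightarrow$ (3) is the crux, and I would argue the contrapositive: assuming $G$ is connected but is none of the three listed graphs, I would construct a single $\bA\in\calS(G)$ with at most $n-2$ distinct eigenvalues, equivalently with $\sum_i(m_i-1)\ge 2$. Here zero forcing is of no help, since it only bounds multiplicities from above; every step must instead produce an actual realization, either of one eigenvalue of multiplicity at least three or of two distinct eigenvalues each of multiplicity two. I would organize the excluded graphs by the structural feature to be exploited: a vertex of degree at least four or two vertices of degree at least three (supplying enough independent null directions), a cycle of length at least four (which contributes a multiplicity-two eigenvalue, to be paired with a second one produced elsewhere), a triangle carrying more branching than two tails (detectable by the failure of a spanning path), and — most delicately — a three-legged spider $S(a,b,c)$ with all legs of length at least two.

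The last family is where I expect the genuine difficulty. When the three legs have equal length, the leg-permutation symmetry supplies a two-dimensional irreducible representation appearing with multiplicity two, so a symmetric weighting immediately realizes an eigenvalue of multiplicity two (indeed two such eigenvalues once the legs are long enough), forcing $q(G)\le n-2$. For unequal legs no such symmetry is available, and one must instead build an explicit weighting realizing the fewest possible distinct eigenvalues; the point is that this minimum drops to at most $n-2$ precisely when every leg has length at least two. Producing such a realization uniformly — and confirming that it breaks down exactly when some leg has length one, so that $S(1,a,b)$ remains allowed — is the main obstacle, since it is the one place where the clean zero forcing bookkeeping used everywhere else must give way to a hands-on eigenvalue construction.
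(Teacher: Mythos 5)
Your implications (3) $\Rightarrow$ (1) and (1) $\Rightarrow$ (2) coincide with the paper's proof: the paper exhibits the size-three forcing set $\{v_1,v_2,x\}$ for every edge configuration of $\Gamma(G,2)$ in the two non-path families (and invokes Example~\ref{ex:path} for the path), and then derives (2) from $m_1+m_2\leq \zr{2}(G)\leq 3$, $m_2\geq 1$, exactly as you do. The divergence, and the genuine gap, is in (2) $\Rightarrow$ (3). The paper never proves this direction; it cites the known classification of connected graphs with $q(G)\geq n-1$, namely \cite[Theorem~51]{gSAP}, for the equivalence of (2) and (3). You instead propose to prove the classification from scratch: for every connected $G$ outside the three families, construct $\bA\in\calS(G)$ with at most $n-2$ distinct eigenvalues. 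What you have written is a plan, not a proof. The case analysis (high-degree vertices, two branch vertices, long cycles, triangles with extra branching, spiders) is never carried out, and you yourself identify the spider with all three legs of length at least two and unequal leg lengths as an unresolved obstacle. That obstacle is real: this contrapositive is the hard, constructive direction of a nontrivial theorem in the literature, and no amount of zero forcing will produce the required realizations. Note also that your symmetric-spider argument as stated needs care: a single eigenvalue of multiplicity two only gives $q\leq n-1$; you need two multiplicity-two eigenvalues (or one of multiplicity three), which the symmetry argument supplies for equal legs of length at least two but which you have no mechanism to produce in general.

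One structural remark that would have saved you: since you prove (3) $\Rightarrow$ (1) and (1) $\Rightarrow$ (2), the implication (3) $\Rightarrow$ (2) is already free, so the only missing link in your cycle is exactly (2) $\Rightarrow$ (3) --- the statement that $q(G)\geq n-1$ forces the structural description. That is precisely the point at which the paper defers to \cite[Theorem~51]{gSAP}; absent that citation (or a complete construction covering every excluded graph, spiders included), your argument does not close.
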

%{\color{red} Is there a theorem \# for the reference below?}
\begin{proof}
According to \cite[Theorem~51]{gSAP}, (2) and (3) are equivalent.  Suppose $\zpr{2}(G) \leq 3$.  Let $\bA\in\calS(G)$ with eigenvalue multiplicities $m_1\geq m_2\geq\cdots\geq m_q$.  Since $m_1+m_2\leq 3$ and $m_2\geq 1$, we have $m_1\leq 2$ and $1\geq m_2\geq \cdots \geq m_q$.  Therefore, $q(G)\geq n-1$.  

Let $G$ be a path with an extra edge joining two vertices of distance two.  Thus, $G$ can also be obtained from a $P_{n-1}$, labeled by $v_1,\ldots,v_{n-1}$, by adding a new vertex $x$ joining two consecutive vertices.  Under this setting, the $\{v_1,v_2,x\}$ is a zero forcing set for any edge configuration of $\Gamma(G,2)$.  Hence $\zr{2}(G)\leq 3$.

Let $G$ be a path with a leaf $x$ on an internal vertex.  Similarly, $\{v_1,v_2,x\}$ is a zero forcing set for any edge configuration of $\Gamma(G,2)$.  Hence $\zr{2}(G)\leq 3$.

Finally, Example~\ref{ex:path} has that $\zr{2}(P_n)=2\leq 3$.
\end{proof}

\begin{theorem}
Let $G$ be a connected graph on $n$ vertices.  Then the following are equivalent.
\begin{enumerate}[label={\rm(\arabic*)}]
\item $\zpr{2}(G)\leq 3$.
\item $M(G)\leq 2$ and any matrix in $\calS(G)$ does not have consecutive multiple eigenvalue.
\item $G$ is either a generalized $3$-star, a generalized $3$-sun, a path with an extra edge joining two vertices of distance two, or a path.
\end{enumerate}
\end{theorem}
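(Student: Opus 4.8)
The plan is to establish the cycle $(3)\Rightarrow(1)\Rightarrow(2)$ together with the structural equivalence $(2)\Leftrightarrow(3)$, exactly paralleling the proof of the preceding theorem for $\zr{2}$. The analytic half comes from Theorem~\ref{thm:mainzp}, which turns the hypothesis $\zpr{2}(G)\leq 3$ into upper bounds on sums of multiplicities of suitably chosen eigenvalue sets; the combinatorial half is the classification $(2)\Leftrightarrow(3)$, which I expect to be the main obstacle.

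For $(1)\Rightarrow(2)$ I would argue directly from Theorem~\ref{thm:mainzp}. Fix $\bA\in\calS(G)$ with distinct eigenvalues $\lambda_1<\cdots<\lambda_q$ and multiplicities $m_1,\ldots,m_q$; since $G$ is connected on $n\geq 2$ vertices, $\bA$ is not a scalar matrix, so $q\geq 2$. The key observation is that for any two consecutive eigenvalues the set $S=\{\lambda_j,\lambda_{j+1}\}$ is a single consecutive segment with $e_\Lambda(S)=2$ (if $S$ is boundary then $e_\Lambda(S)=|S|=2$, and otherwise $e_\Lambda(S)=2\lceil 2/2\rceil=2$), whence $m_j+m_{j+1}\leq\zpr{2}(G)\leq 3$. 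Applying this to a hypothetical pair of consecutive multiple eigenvalues ($m_j,m_{j+1}\geq 2$, so $m_j+m_{j+1}\geq 4$) is a contradiction, so no matrix in $\calS(G)$ has consecutive multiple eigenvalues. Applying it to a hypothetical eigenvalue of multiplicity at least $3$ together with a neighbouring eigenvalue (which exists since $q\geq 2$ and has multiplicity at least $1$) again forces $m_j+m_{j+1}\geq 4$, a contradiction; hence $M(G)\leq 2$. This yields $(2)$.

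For $(3)\Rightarrow(1)$ I would verify $\zpr{2}(G)\leq 3$ for each family by exhibiting, for every edge configuration $H$ of $\Gamma(G,2)$, a positive semidefinite zero forcing set of size at most $3$. For a path this is Example~\ref{ex:path+}. For a path with an extra edge joining two vertices of distance two, the set $\{v_1,v_2,x\}$ from the preceding theorem is already a classical (hence positive semidefinite) zero forcing set of every configuration, and since $Z_+(H)\leq Z(H)$ for each $H$ we get $\zpr{2}(G)\leq\zr{2}(G)\leq 3$. The two genuinely new cases are the generalized $3$-star and the generalized $3$-sun. Here I would first record the shape of $\Gamma(G,2)$ (for a tree, distance-$2$ pairs always give singleton edges while distance-$1$ pairs give multiedges, so the singleton edges are present in every configuration and only the distance-$1$ edges vary), then pick three vertices near the branch vertex, one per arm, and run the positive semidefinite color change rule component by component outward along the three arms. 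The component-wise nature of positive semidefinite forcing is precisely what keeps the number at $3$ where classical forcing would need more.

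The remaining and hardest step is $(2)\Leftrightarrow(3)$: classifying the connected graphs with $M(G)\leq 2$ for which no matrix in $\calS(G)$ has a pair of consecutive multiple eigenvalues. The direction $(3)\Rightarrow(2)$ is a per-family argument — paths have $M(G)=1$, and the remaining families are trees or near-trees of path-cover number $2$, so $M(G)\leq 2$, while the absence of consecutive multiple eigenvalues follows from the multiplicity theory for these structures (e.g.\ the Parter--Wiener theorem for the tree cases). The direction $(2)\Rightarrow(3)$ is the structural heart: starting from the strong restrictions imposed by $M(G)\leq 2$ on the branch-vertex and cycle structure of $G$, one must rule out every connected graph outside the four families, using the no-consecutive-multiplicity hypothesis to dispose of the borderline graphs. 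I would carry this out either by invoking the established classification of graphs of small maximum nullity in the IEP-$G$ literature (the positive semidefinite analogue of the result \cite[Theorem~51]{gSAP} cited in the preceding theorem) or, absent a direct citation, by an explicit case analysis on the degree sequence and induced cycles of $G$; pinning down this classification cleanly is where I expect the real work to lie.
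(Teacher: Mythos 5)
Your high-level architecture --- Theorem~\ref{thm:mainzp} for $(1)\Rightarrow(2)$, explicit positive semidefinite forcing sets per family for $(3)\Rightarrow(1)$, and a known classification for the structural step --- is the same as the paper's, and your $(1)\Rightarrow(2)$ argument is correct and complete (the segment computation $e_\Lambda(\{\lambda_j,\lambda_{j+1}\})=2$ plus the pairing trick for $M(G)\leq 2$ is exactly what underlies the paper's one-line version). However, your $(3)\Rightarrow(1)$ step fails for precisely the two families you call ``genuinely new.'' For a generalized $3$-star with center $c$, the set consisting of one vertex per arm next to $c$ does \emph{not} PSD-force every edge configuration of $\Gamma(G,2)$: consider the configuration in which every multiedge (every distance-$1$ pair) is dropped, so only the distance-$2$ singleton edges remain. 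Since the graph is a tree, hence bipartite, every distance-$2$ edge joins two vertices whose distances to $c$ have the same parity, so this configuration is disconnected into the even-distance class (which contains $c$) and the odd-distance class (which contains all three of your chosen vertices). No colored vertex is ever adjacent to the even class, so $c$ and every even-distance vertex stay white. Symmetrically, for the generalized $3$-sun the choice of one vertex per pendant path fails on the configuration with \emph{all} multiedges present: there the uncolored vertices (the triangle together with the path tails) form a single component, and each of your three colored vertices has at least two uncolored neighbors in that component, so no force can ever be made. The paper's sets are qualitatively different, and this is what makes the proof work: for the generalized $3$-star it takes the center together with two of its three neighbors (hitting both parity classes), and for the generalized $3$-sun and the path-plus-edge it takes the three vertices of the unique cycle. (Your path and path-plus-edge cases are fine.)

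The second gap is $(2)\Rightarrow(3)$, which you correctly identify as the heart but do not supply. The needed ingredient is not a positive semidefinite analogue of \cite[Theorem~51]{gSAP} (that result concerns $q(G)\geq n-1$); it is the Barioli--Fallat classification \cite[Corollary~5.5]{minormult} of connected graphs that do not allow two consecutive multiple eigenvalues: any such graph is a generalized star, a generalized $3$-sun, a path with an extra edge joining two vertices of distance two, or a path. Given that list, the hypothesis $M(G)\leq 2$ only has to eliminate generalized $k$-stars with $k\geq 4$, whose maximum nullity is $k-1\geq 3$, and $(3)$ follows. Without this citation, your fallback of an ad hoc case analysis on degree sequences and induced cycles is not a proof step but a research project in its own right. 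A minor remark: once you have the cycle $(1)\Rightarrow(2)\Rightarrow(3)\Rightarrow(1)$, the separate direction $(3)\Rightarrow(2)$ you sketch (via Parter--Wiener, etc.) is redundant.
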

\begin{proof}
Suppose $\zpr{2}(G)\leq 3$.  Then $m_i+m_j\leq 3$ for any two consecutive eigenvalues, so $M(G)\leq 2$ and there is no consecutive multiple eigenvalues.  

Suppose $G$ is a graph not allowing two consecutive multiple eigenvalues.  Then by \cite[Corollary~5.5]{minormult}, $G$ is either a generalized star, a generalized $3$-sun, a path with an extra edge joining two vertices of distance $2$, or a path.  By examining the maximum nullities of these graphs, (2) implies (3).

If $G$ is a generalized $3$-star, let $v$ be the center vertex, and $x,y$ any two of the three neighbors of $v$.  If $G$ is a generalized $3$-sun or a a path with an extra edge joining two vertices of distance two, let $v,x,y$ be the three vertices on the unique cycle.  Thus, $\{v,x,y\}$ is a PSD zero forcing set of any edge configuration of $\Gamma(G,2)$.  Along with Example~\ref{ex:path+}, (3) implies (1).
\end{proof}

%%% K23
\subsection{Restrictions caused by \texorpdfstring{$K_{2,3}$}{K23}}

Sometimes not all configurations in $\Gamma(G,k)$ is a graph of $A^k$ for some $A\in\calS(G)$.  Here we will see some examples where $K_{2,3}$ and $K_{2,3}+e$ limits the achievable configurations in $\Gamma(G,2)$ and provides a detailed description on the multiplicity lists.

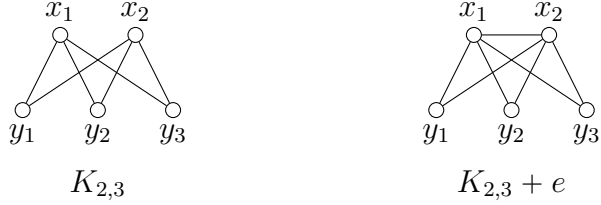
\begin{figure}[h]
    \centering
    \begin{tikzpicture}
    \node[label={below:$y_1$}] (y1) at (0,0) {};
    \node[label={below:$y_2$}] (y2) at (1,0) {};
    \node[label={below:$y_3$}] (y3) at (2,0) {};
    \node[label={above:$x_1$}] (x1) at (0.5,1) {};
    \node[label={above:$x_2$}] (x2) at (1.5,1) {};
    \foreach \i in {1,2,3} { 
        \foreach \j in {1,2} {
            \draw (y\i) -- (x\j);
        }
    }
    \node[rectangle,draw=none] at (1,-1) {$K_{2,3}$};
    \end{tikzpicture}
    \hfil
    \begin{tikzpicture}
    \node[label={below:$y_1$}] (y1) at (0,0) {};
    \node[label={below:$y_2$}] (y2) at (1,0) {};
    \node[label={below:$y_3$}] (y3) at (2,0) {};
    \node[label={above:$x_1$}] (x1) at (0.5,1) {};
    \node[label={above:$x_2$}] (x2) at (1.5,1) {};
    \foreach \i in {1,2,3} { 
        \foreach \j in {1,2} {
            \draw (y\i) -- (x\j);
        }
    }
    \draw (x1) -- (x2);
    \node[rectangle,draw=none] at (1,-1) {$K_{2,3}+e$};
    \end{tikzpicture}
    \caption{Labeled $K_{2,3}$ and $K_{2,3} + e$}
    \label{fig:k23e}
\end{figure}

\begin{lemma}
\label{lem:k23e}
Let $G$ be a graph with an induced $K_{2,3}$ or $K_{2,3}+e$ whose two parts are $X = \{x_1,x_2\}$ and $Y=\{y_1,y_2,y_3\}$.  
Suppose the only paths of length two from $y_i$ to $y_j$, $i\neq j$, are through $x_1$ and $x_2$.  
Then for any $A\in\calS(G)$, the graph of $A^2$ has at least one edge on $Y$, so the sum of any two eigenvalue multiplicities of $A$ is bounded above by 
\[\max_{\substack{H \in \confe(\Gamma(G,2))\\ E(H[Y])\neq\emptyset}} Z(H). \] 
\end{lemma}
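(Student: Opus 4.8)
The plan is to reduce the whole statement to a single computation: the off-diagonal entries of $A^2$ restricted to $Y$, and then to transfer the conclusion from $A^2$ to $p(A)$ for a degree-two polynomial $p$. Fix $A\in\calS(G)$ and write $a_i=A_{y_ix_1}$ and $b_i=A_{y_ix_2}$ for $i=1,2,3$. Since each $y_i$ is adjacent to both $x_1$ and $x_2$ in the induced $K_{2,3}$ (or $K_{2,3}+e$), all six scalars $a_i,b_i$ are nonzero. For $i\neq j$, the entry $(A^2)_{y_iy_j}=\sum_k A_{y_ik}A_{ky_j}$ collects exactly the common neighbors $k$ of $y_i$ and $y_j$ (the terms $k=y_i,y_j$ drop out because $y_i\not\sim y_j$ forces $A_{y_iy_j}=0$). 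The hypothesis that the only length-two paths from $y_i$ to $y_j$ pass through $x_1$ and $x_2$ says precisely that $x_1,x_2$ are the only such common neighbors, so
\[(A^2)_{y_iy_j}=a_ia_j+b_ib_j.\]
In the $K_{2,3}+e$ case the extra edge $x_1x_2$ adds no common neighbor of a pair in $Y$, so the same formula holds verbatim.

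First I would show these three quantities cannot all vanish. Suppose $a_ia_j+b_ib_j=0$ for every pair $\{i,j\}\subseteq\{1,2,3\}$. Setting $r_i=a_i/b_i$ (well defined and nonzero since $b_i,a_i\neq 0$) and dividing by $b_ib_j\neq 0$ gives $r_ir_j=-1$ for all three pairs. From $r_1r_2=r_1r_3$ and $r_1\neq 0$ we get $r_2=r_3$, whence $r_2r_3=r_2^2=-1$, which is impossible for a real number $r_2$. Hence at least one of $(A^2)_{y_1y_2},(A^2)_{y_1y_3},(A^2)_{y_2y_3}$ is nonzero, i.e., the graph of $A^2$ has at least one edge on $Y$. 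This is the crux of the argument, and it is where both the size three of the part $Y$ and the reality of the symmetric matrix are essential: with only two vertices in $Y$ there is a single equation and no contradiction, and over $\mathbb{C}$ the relation $r_2^2=-1$ is satisfiable.

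Finally I would transfer this to an arbitrary pair of eigenvalues. Given distinct eigenvalues $\lambda_i,\lambda_j$ of $A$, put $p(x)=(x-\lambda_i)(x-\lambda_j)$, so $\nul(p(A))=m_i+m_j$. Because $Y$ is independent in $G$, the off-diagonal entries of $p(A)$ on $Y$ coincide with those of $A^2$ (the linear term $-(\lambda_i+\lambda_j)A$ contributes nothing there), so the graph of $p(A)$ also carries an edge on $Y$. By the Remark preceding the definition of $\zr{r}$, $p(A)\in\calS(H)$ for the edge configuration $H\in\confe(\Gamma(G,2))$ determined by the support of $p(A)$, and this $H$ satisfies $E(H[Y])\neq\emptyset$. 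Applying $\nul(p(A))\le M(H)\le Z(H)$ (Theorem~\ref{aimgroup}) then yields
\[m_i+m_j=\nul(p(A))\le Z(H)\le \max_{\substack{H\in\confe(\Gamma(G,2))\\ E(H[Y])\neq\emptyset}} Z(H),\]
as claimed. I expect no further obstacle here; the only care needed is to confirm that the nonzero off-diagonal pattern of $p(A)$ genuinely defines a valid edge configuration of $\Gamma(G,2)$, which follows since singleton edges of $\Gamma(G,2)$ are forced nonzero while multiedges are free to vanish or not.
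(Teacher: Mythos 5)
Your proof is correct and takes essentially the same route as the paper: both reduce to showing that the three entries $(A^2)_{y_iy_j}=a_ia_j+b_ib_j$ cannot all vanish (the paper via pigeonhole on the signs of the three nonzero products $a_ib_i$, you via the equivalent contradiction $r_2^2=-1$ with $r_i=a_i/b_i$), and then transfer the nonzero entry to $(A-\lambda_iI)(A-\lambda_jI)$ and apply the zero forcing bound. Your explicit check that the linear term of $p(A)$ contributes nothing to the off-diagonal entries on the independent set $Y$ is a detail the paper leaves implicit, but it is the same argument.
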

\begin{proof}
Let $A = \begin{bmatrix}a_{uv}\end{bmatrix}$.
Consider the three products $a_{x_1y_1}a_{y_1x_2}$, $a_{x_1y_2}a_{y_2x_2}$, $a_{x_1y_3}a_{y_3x_2}$.  Since $A\in\calS(G)$, these products are nonzero.  By the pigeonhole principle, two of them have the same sign, say $(a_{x_1y_i}a_{y_ix_2})(a_{x_1y_j}a_{y_jx_2})>0$.
Equivalently, this means
$(a_{y_ix_1}a_{x_1y_j})(a_{y_ix_2}a_{x_1y_j})>0$.  Therefore, the $y_iy_j$-entry of $A^2$ is nonzero.

Let $\lambda_1$ and $\lambda_2$ be two eigenvalues of $A$ with multiplicities $m_1$ and $m_2$.  Then the matrix $(A-\lambda_1I)(A-\lambda_2I)$ has nullity $m_1+m_2$, and its $y_iy_j$-entry is nonzero, so its graph is some graph $H\in\confe(\Gamma(G,2))$ with $E(H[Y])\neq\emptyset$.
\end{proof}

\begin{example}
\label{ex:k23e}
Let $G$ be $K_{2,3}$ or $K_{2,3}+e$.  The configurations of $\Gamma(G,2)$ can be any graph on $5$ vertices, so $\zr{2}(G)=5$.  Meanwhile, the longest unique shortest path on $K_{2,3}$ are on $2$ vertices, so it seems that $q(G)$ can possibly be $2$.  However, if we only focus on the configurations $H$ of $\Gamma(G,2)$ with at least an edge, then its zero forcing number is at most $4$ since one may color every vertex except for one of the two endpoints of the edge.  By Lemma~\ref{lem:k23e} the sum of any two multiplicities is bounded above by $4$ and $q(G)\geq 3$.
\end{example}

\begin{figure}[h]
    \centering
    \begin{tikzpicture}
    \node[label={below:$y_1$}] (y1) at (0,0) {};
    \node[label={below:$y_2$}] (y2) at (1,0) {};
    \node[label={below:$y_3$}] (y3) at (2,0) {};
    \node[label={above:$x_1$}] (x1) at (0.5,1) {};
    \node[label={above:$x_2$}] (x2) at (1.5,1) {};
    \foreach \i in {1,2,3} { 
        \foreach \j in {1,2} {
            \draw (y\i) -- (x\j);
        }
    }
    \node[label={above:$z$}] (z) at (2.5,2) {};
    \draw (z) -- (x1);
    \draw (z) -- (x2);
    \draw (z) -- (y3);
    \node[rectangle,draw=none] at (1,-1) {\texttt{G170}};
    \end{tikzpicture}
    \hfil
    \begin{tikzpicture}
    \node[label={below:$y_1$}] (y1) at (0,0) {};
    \node[label={below:$y_2$}] (y2) at (1,0) {};
    \node[label={below:$y_3$}] (y3) at (2,0) {};
    \node[label={above:$x_1$}] (x1) at (0.5,1) {};
    \node[label={above:$x_2$}] (x2) at (1.5,1) {};
    \foreach \i in {1,2,3} { 
        \foreach \j in {1,2} {
            \draw (y\i) -- (x\j);
        }
    }
    \draw (x1) -- (x2);
    \node[label={above:$z$}] (z) at (2.5,2) {};
    \draw (z) -- (x1);
    \draw (z) -- (x2);
    \draw (z) -- (y3);
    \node[rectangle,draw=none] at (1,-1) {\texttt{G179}};
    \end{tikzpicture}
    \caption{Labeled \texttt{G170} and \texttt{G179}}
    \label{fig:g170g179}
\end{figure}

\begin{example}
Following the same arguments as in Example~\ref{ex:k23e}, the two graphs $G$ in Figure~\ref{fig:g170g179} have $q(G)\geq 3$ but they do not have any unique shortest path on $3$ vertices.
\end{example}

Let $G_\ell$ be obtained from $P_\ell$ and $K_{2,3}$ by joining the $y_1$ in $K_{2,3}$ with an endpoint of $P_\ell$.  Let $G_\ell+e$ be the graph obtained from $G_\ell$ by adding the edge $\{x_1,x_2\}$.  See Figure~\ref{fig:pathk23e}.

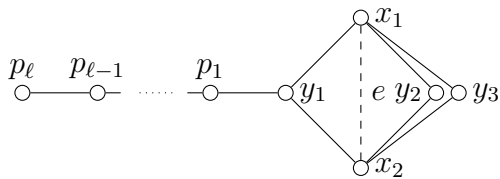
\begin{figure}[h]
    \centering
    \begin{tikzpicture}
    \node[label={right:$y_1$}] (y1) at (0,0) {};
    \node[label={left:$y_2$}] (y2) at (2,0) {};
    \node[label={right:$y_3$}] (y3) at (2.3,0) {};
    \node[label={right:$x_1$}] (x1) at (1,1) {};
    \node[label={right:$x_2$}] (x2) at (1,-1) {};
    \foreach \i in {1,2,3} {
        \foreach \j in {1,2} {
            \draw (y\i) -- (x\j);
        }
    }
    \draw[dashed] (x1) to node[midway,right,draw=none]{$e$} (x2);
    
    \node[label={above:$p_1$}] (p1) at (-1,0) {};
    \node[label={above:$p_{\ell-1}$}] (plm) at (-2.5,0) {};
    \node[label={above:$p_\ell$}] (pl) at (-3.5,0) {};

    \draw (y1) -- (p1);
    \draw (pl) -- (plm);
    \draw (p1.west) -- ++(-0.2,0);
    \draw (plm.east) -- ++(0.2,0);
    \draw[dotted] ([xshift=-0.4cm]p1.west) -- ([xshift=0.4cm]plm.east);
    \end{tikzpicture}
    \caption{The graph $G_\ell$ and the optional edge $e$}
    \label{fig:pathk23e}
\end{figure}

\begin{theorem}
\label{thm:pathk23e}
Let $G$ be $G_\ell$ or $G_\ell+e$ as shown in Figure~\ref{fig:pathk23e} and $A\in\calS(G)$.  Then the sum of any two eigenvalue multiplicities of $A$ is at most $4$.
\end{theorem}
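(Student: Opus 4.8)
The plan is to deduce the bound from Lemma~\ref{lem:k23e}, turning the statement into a finite claim about zero forcing numbers of configurations of $\Gamma(G,2)$, and then to exhibit an explicit zero forcing set of size $4$ for every relevant configuration. First I would check the hypotheses of Lemma~\ref{lem:k23e}. In both $G_\ell$ and $G_\ell+e$ the five vertices $\{x_1,x_2,y_1,y_2,y_3\}$ induce a $K_{2,3}$ (respectively $K_{2,3}+e$) with parts $X=\{x_1,x_2\}$ and $Y=\{y_1,y_2,y_3\}$, and the only common neighbours of any two $y_i,y_j$ are $x_1$ and $x_2$ (the extra neighbour $p_1$ of $y_1$ is not shared by any other $y$-vertex). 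Hence Lemma~\ref{lem:k23e} applies, and it suffices to prove
\[\max_{\substack{H\in\confe(\Gamma(G,2))\\ E(H[Y])\neq\emptyset}} Z(H)\leq 4.\]

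Next I would determine $\Gamma(G,2)$ via the lazy-walk count. Adjacent pairs of $G$ become multiedges; a distance-two pair becomes a singleton edge if it has a unique common neighbour and a multiedge otherwise. Thus the \emph{singleton} (hence always present) edges are the skip-two path edges $p_ip_{i+2}$ together with $p_2y_1$, $p_1x_1$, and $p_1x_2$, while the \emph{optional} multiedges are the path edges $p_ip_{i+1}$, the edge $p_1y_1$, all six edges $y_ix_j$, the triangle $y_1y_2,y_1y_3,y_2y_3$ on $Y$, and $x_1x_2$. Since the extra edge $e$ merely turns the distance-two pair $x_1x_2$ (which already had common neighbours $y_1,y_2,y_3$) into an adjacent pair, it changes neither which pairs are singleton, multi, or non-edges; so $G_\ell$ and $G_\ell+e$ share the same family of edge configurations and may be handled simultaneously.

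For $\ell\geq 2$ and any configuration $H$ with $E(H[Y])\neq\emptyset$, I would use $B=\{p_\ell,p_{\ell-1},x_1,y^\ast\}$, where $y^\ast\in\{y_2,y_3\}$ is taken to be an isolated vertex of $H$ if one exists (note at most one of $y_2,y_3$ can be isolated, as $E(H[Y])\neq\emptyset$). The forcing proceeds in four stages: (i) using the always-present skip-two singleton edges, $\{p_\ell,p_{\ell-1}\}$ colours the entire path by repeatedly forcing $p_i\to p_{i-2}$ in decreasing order of $i$, which is valid in every configuration because deleting an optional multiedge only removes neighbours; (ii) $p_2\to y_1$ along the singleton edge $p_2y_1$, since after (i) the vertex $y_1$ is the unique uncoloured neighbour of $p_2$; (iii) $p_1\to x_2$ along the singleton edge $p_1x_2$, since $x_1$ is coloured and $p_1$ is adjacent to no $y$-vertex other than $y_1$, so $x_2$ is its unique uncoloured neighbour; (iv) now every vertex except the remaining $y$-vertex $y^c\in\{y_2,y_3\}\setminus\{y^\ast\}$ is blue, and $y^c$ is non-isolated by the choice of $y^\ast$, so any blue neighbour forces it. The boundary case $\ell=1$ (where $G$ has only six vertices) is checked directly, essentially as in Example~\ref{ex:k23e}.

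The main obstacle is the exhaustive verification that stage (iv) always closes over \emph{every} edge configuration, i.e.\ handling the sparse configurations in which many of the optional multiedges $y_iy_j$, $y_ix_j$, $x_1x_2$ are absent, and in particular the possibility that $y_2$ or $y_3$ is isolated. Two structural facts keep this analysis short and make me confident it goes through: the symmetries $y_2\leftrightarrow y_3$ and $x_1\leftrightarrow x_2$ collapse the cases, and $x_1,x_2,y_1$ can never be isolated because each is joined to $p_1$ or $p_2$ by a singleton edge that is present in every configuration. This is the only genuinely delicate step; everything else is the routine bookkeeping of the forcing order along the path.
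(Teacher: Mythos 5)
Your proposal is correct in substance and rests on the same reduction as the paper: both invoke Lemma~\ref{lem:k23e} to guarantee an edge inside $Y$ in the graph $H$ of $(A-\lambda_1 I)(A-\lambda_2 I)$, and then exhibit an explicit zero forcing set of size $4$ for every such configuration. Where you differ is the certificate. The paper colors $\{x_1,x_2\}\cup Y$ minus one endpoint $v$ of the guaranteed $Y$-edge and forces outward from the $K_{2,3}$: $u\rightarrow v$, then $x_1\rightarrow p_1$, $y_1\rightarrow p_2$, $p_1\rightarrow p_3$, \dots, $p_{\ell-2}\rightarrow p_\ell$, using the singleton edges exactly as you identified them. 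You instead start from the far end of the path with $\{p_\ell,p_{\ell-1},x_1,y^\ast\}$, force inward along the skip-two singleton edges, and close with the non-isolation argument for the last $y$-vertex. Both strategies work for $\ell\geq 2$, and your structural analysis of $\Gamma(G,2)$ (which pairs are singleton, multi, or non-edges, and why $G_\ell$ and $G_\ell+e$ have the same configurations) is accurate. But the paper's choice buys uniformity in $\ell$: its forcing sequence is valid verbatim for every $\ell\geq 1$, and it never needs your isolated-vertex case analysis, since all but one vertex of $Y$ is blue from the start.

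The one genuine weak point in your write-up is the case $\ell=1$, which is not a throwaway boundary case: $G_1$ and $G_1+e$ are \texttt{G125} and \texttt{G138}, precisely the graphs the theorem is aimed at (see the remark following it in the paper). Your set $\{p_\ell,p_{\ell-1},x_1,y^\ast\}$ does not exist there, and the mechanism of Example~\ref{ex:k23e} that you cite (color everything except one endpoint of the $Y$-edge) gives only $Z(H)\leq |V(H)|-1=5$ on this six-vertex graph, not the needed $4$. The fix is immediate with your own machinery --- take $B=\{p_1,y_1,x_1,y^\ast\}$, run your stage (iii) force $p_1\rightarrow x_2$, then stage (iv) --- but as literally written, the pointer you give for this case does not prove the bound.
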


\begin{proof}
Let $A\in\calS(G_\ell)$ and $\lambda_1,\lambda_2$ two eigenvalues of $A$ with multiplicities $m_1,m_2$.  Let  $H\in\confe(\Gamma(G,2))$ be the graph of $(A-\lambda_1I)(A-\lambda_2I)$.  Then $H$ contains no edges between vertices in $\{y_2,y_3\}$ and vertices in $\{p_1,\ldots,p_\ell\}$.

According to Lemma~\ref{lem:k23e}, $H$ contains at least one of $\{y_1,y_2\}$, $\{y_1,y_3\}$, and $\{y_2,y_3\}$ as an edge.
\begin{itemize}
    \item If $\{y_1,y_2\}\in E(H)$, let $u=y_2$ and $v=y_1$.  
    \item If $\{y_1,y_3\}\in E(H)$, let $u=y_3$ and $v=y_1$.  
    \item If $\{y_2,y_3\}\in E(H)$, let $u=y_3$ and $v=y_2$.   
\end{itemize}
Let $U = \{x_1,x_2,x_3,y_1,y_2\}$.  Now $U\setminus\{v\}$ is a zero forcing set of $H$ by the process $u\rightarrow v$, $x_1\rightarrow p_1$, $y_1\rightarrow p_2$, $p_1\rightarrow p_3$, $\ldots$, $p_{\ell-2}\rightarrow p_\ell$.  In either case, $Z(H)\leq 4$, so $m_1+m_2\leq 4$.
\end{proof}

\begin{remark}
The graphs $G_1$ and $G_1+e$ are \texttt{G125} and \texttt{G138} in \emph{An Atlas of Graphs}\cite{atlas}.  In a previous study, \cite[Section~4.3]{ahn2017ordered}, these two graphs are the ``remaining case'' that need additional efforts to rule out the ordered multiplicity lists %\texttt{132} and \texttt{231}.  
(1,3,2) and (2,3,1).
\end{remark}

The next example shows the similar techniques in Lemma~\ref{lem:k23e} can be applied to other graphs.  

\begin{figure}[h]
    \centering
    \begin{tikzpicture}
    \node[label={below:$c$}] (c) at (0,0) {};
    \foreach \i in {1,2,3,4,5} {
        \pgfmathsetmacro{\ang}{90 + 72*(\i-1)}
        \node[label={\ang:$w_\i$}] (w\i) at (\ang:1) {};
        \draw (c) -- (w\i);
    }
    \draw (w1) -- (w2) -- (w3) -- (w4) -- (w5) -- (w1);
    \end{tikzpicture}
    \caption{A wheel graph $W_6$.}
    \label{fig:wheel6}
\end{figure}
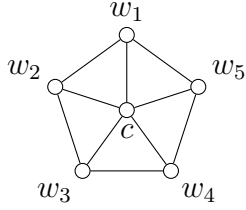

\begin{example} \label{ex:wheel}
Let $G=W_6$ be the wheel graph as shown in Figure~\ref{fig:wheel6} and $A=\begin{bmatrix}a_{uv}\end{bmatrix}\in\calS(G)$.  By replacing $A$ with $DAD$, where $D$ is a diagonal matrix whose diagonal entries are $1$ or $-1$, we may assume $a_{cw_i}>0$ for each $i=1,\ldots,5$.  

Let $H$ be the graph of $A^2$.  By writing $+$ or $-$ on the $5$-cycle induced on $\{w_1,\ldots,w_5\}$, there must be two consecutive edges $\{w_i,w_{i-1}\}$ and $\{w_i,w_{i+1}\}$ with the same signs, where the index are modulo $5$.  That is,  $a_{w_{i-1}w_i}a_{w_iw_{i+1}}>0$.  Since $a_{w_{i-1}c}a_{cw_{i+1}}>0$, the $(w_{i-1},w_{i+1})$-entry of $A^2$ is nonzero.  Therefore, $(A-\lambda_1I)(A-\lambda_2I)$ is not a zero matrix for any eigenvalues $\lambda_1,\lambda_2$ of $A$, so $q(G)\geq 3$.  Note that $W_6$ is the graph \texttt{G187} in the atlas \cite{atlas}, and this provides an alternative proof of \cite[Lemma~6.14]{BHPRT18}.
\end{example}

%%% q bound
\subsection{A bound for \texorpdfstring{$q(G)$}{q(G)}}

 We now show that our techniques with some extra analysis are able to show that $q(\TT)>\diam(\TT)+1$ for the $\TT$ shown in Figure~\ref{fig.bftree}.  This example was provided by Barioli and Fallat~\cite[Fig 3.1]{barioli2004two}.

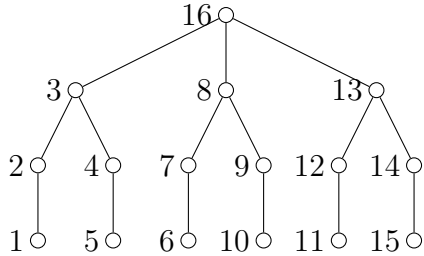
\begin{figure}[h]
    \centering
    \begin{tikzpicture}
    \node[label={left:$1$}] (1) at (0,0) {};
    \node[label={left:$5$}] (5) at (1,0) {};
    \node[label={left:$6$}] (6) at (2,0) {};
    \node[label={left:$10$}] (10) at (3,0) {};
    \node[label={left:$11$}] (11) at (4,0) {};
    \node[label={left:$15$}] (15) at (5,0) {};
    \node[label={left:$2$}] (2) at (0,1) {};
    \node[label={left:$4$}] (4) at (1,1) {};
    \node[label={left:$7$}] (7) at (2,1) {};
    \node[label={left:$9$}] (9) at (3,1) {};
    \node[label={left:$12$}] (12) at (4,1) {};
    \node[label={left:$14$}] (14) at (5,1) {};
    \node[label={left:$3$}] (3) at (0.5,2) {};
    \node[label={left:$8$}] (8) at (2.5,2) {};
    \node[label={left:$13$}] (13) at (4.5,2) {};
    \node[label={left:$16$}] (16) at (2.5,3) {};
    
    \draw (1) -- (2) -- (3) -- (4) -- (5);
    \draw (6) -- (7) -- (8) -- (9) -- (10);
    \draw (11) -- (12) -- (13) -- (14) -- (15);
    \draw (3) -- (16) -- (13);
    \draw (16) -- (8);
    \end{tikzpicture}
    \caption{A tree $\TT$ with $q(\TT)=8$ and $\diam(\TT)+1=7$}
    \label{fig.bftree}
\end{figure}

\begin{theorem}\label{thm:bftree} 
{\rm \cite{barioli2004two}}
Let $\TT$ be the graph in Figure~\ref{fig.bftree}.  Then $q(\TT) = 8$ while $\diam(\TT) + 1 = 7$.
\end{theorem}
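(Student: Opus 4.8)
The claim has two parts: the upper bound $\diam(\TT)+1 = 7$ (which is an arithmetic fact about the tree, since the longest geodesic runs between two of the degree-one leaves and passes through the central vertex $16$), and the genuinely interesting part, the lower bound $q(\TT) \ge 8$. I would dispatch the diameter computation first by simply inspecting Figure~\ref{fig.bftree}: the three arms $(1,2,3,4,5)$, $(6,7,8,9,10)$, $(11,12,13,14,15)$ each have length $4$ (five vertices), and they attach to the hub $16$ via the vertices $3$, $8$, $13$; the longest path therefore goes leaf-to-hub-to-leaf, say $1 \to 2 \to 3 \to 16 \to 13 \to 14 \to 15$, which has $6$ edges, so $\diam(\TT)=6$ and $\diam(\TT)+1=7$.

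For the lower bound I would invoke the power zero forcing machinery developed in the excerpt. The goal is to show that no matrix $\bA\in\calS(\TT)$ can have only $7$ distinct eigenvalues. Since $\TT$ is a tree on $16$ vertices with $M(\TT)$ known to be small, the obstruction is not about a single large multiplicity but about the \emph{sum} of consecutive multiplicities being forced too low. The natural tool is Theorem~\ref{thm:mainzp}: for any $S\subseteq\Lambda$ with $r=e_\Lambda(S)$, we have $\sum_{\lambda_i\in S} m_i \le \zpr{r}(\TT)$. The strategy is to compute (or bound) the parameters $\zpr{r}(\TT)=\Zpcheck(\Gamma(\TT,r))$ for the relevant small $r$ — most importantly $\zpr{2}(\TT)$, $\zpr{3}(\TT)$, and perhaps $\zpr{4}(\TT)$ — and feed the resulting inequalities, together with $\sum_i m_i = 16$, into a linear feasibility argument. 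Concretely, assuming $q(\TT)=7$ gives a multiplicity list $(m_1,\dots,m_7)$ summing to $16$ with each $m_i\le M(\TT)$; I would then show that the bounds on evenly-consecutive sums (e.g. $m_i+m_{i+1}\le \zpr{2}(\TT)$ for each interior-respecting pair, and three-term bounds via $\zpr{3}$) cannot simultaneously be satisfied, exactly the linear-constraint method advertised in the introduction. The structure of $\TT$ as a tree built from three $P_5$-arms glued at a hub should make $\Gamma(\TT,2)$ analyzable by hand, since the hub $16$ has degree $3$ and the arms contribute only local multi-edge structure.

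The main obstacle will be computing $\zpr{r}(\TT)$ sharply enough. Because $\zpr{r}(\TT)$ is a maximum of $Z_+$ over \emph{all} edge configurations of $\Gamma(\TT,r)$, a naive bound may be too weak to force a contradiction; I expect the symmetry-breaking restrictions of the kind used in Lemma~\ref{lem:k23e} — where a sign/pigeonhole argument rules out certain configurations of $\Gamma(G,2)$ actually arising from a real $\bA^2$ — to be essential here as well. That is, the hub $16$ together with the three attachment vertices $3,8,13$ plays a role analogous to the $K_{2,3}$ obstruction: a parity argument should guarantee that the graph of $(\bA-\lambda_i I)(\bA-\lambda_j I)$ always contains at least one forced edge among the arm-tips, which lowers the effective zero forcing number below what the generic configuration bound would give. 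So the plan is: (i) settle the diameter by inspection; (ii) establish the needed sharp values of $\zpr{2},\zpr{3},\zpr{4}$ for $\TT$, using configuration-restriction arguments where the generic bound is insufficient; and (iii) run the linear-programming contradiction against any hypothetical $7$-term list summing to $16$. I anticipate step (ii) to be where essentially all the work lies.
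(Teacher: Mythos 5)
Your diameter computation is fine, and ``power zero forcing plus linear feasibility'' is indeed the right family of tools, but the proposal has two genuine gaps, one small and one fatal. The small one: the theorem asserts an \emph{equality}, so you must also exhibit a matrix in $\calS(\TT)$ attaining $8$ distinct eigenvalues; the paper does this by taking the adjacency matrix of $\TT$ with the diagonal entry at vertex $16$ set to $1$. Your plan never addresses this direction. The fatal one: the entire substance of $q(\TT)\geq 8$ is deferred to your step (ii), and the parameters you propose to compute there are provably too weak to make step (iii) close. Concretely, consider the edge configuration of $\Gamma(\TT,5)$ in which the six leaves $\{1,5,6,10,11,15\}$ form an independent set and every other pair of vertices is joined; this is a legitimate configuration, since the singleton (forced) edges of $\Gamma(\TT,5)$ are exactly the distance-$5$ pairs, each of which joins a leaf to a non-leaf, while the forbidden pairs are the distance-$6$ leaf--leaf pairs. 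The $0$--$1$ matrix whose zero entries are exactly the leaf--leaf positions has only two distinct rows, hence rank $2$ and nullity $14$, so this configuration has $Z\geq 14$ and therefore $\zr{5}(\TT)\geq 14$. Dually, in the configuration with \emph{all} potential edges present, sending both leaves of each arm to a distinct standard basis vector of $\mathbb{R}^3$ and every non-leaf to $(1,1,1)$ produces a PSD Gram matrix of rank $3$ whose graph is exactly that configuration, so $Z_+\geq 13$ there and $\zpr{5}(\TT)\geq 13$. Similar explicit configurations give $\zpr{2}(\TT)\geq 6$ (keep only the distance-two edges: the result is a triangle on $\{3,8,13\}$ with pendant leaf-pairs, disjoint from three triangles sharing the hub $16$), $\zpr{3}(\TT)\geq 7$, and $\zpr{4}(\TT)\geq 12$. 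Consequently the hypothetical list $(1,3,3,3,3,2,1)$ --- which has $m_1=m_7=1$, all entries at most $Z(\TT)=4$, all pairs at most $\zr{2}(\TT)=7$, consecutive pairs at most $6$, boundary triples equal to $7$, and boundary $5$-segments equal to $13$ --- satisfies \emph{every} inequality that Theorems~\ref{thm:main} and~\ref{thm:mainzp} and Corollary~\ref{cor:partition} can produce. No linear program built from the parameters $\zr{r}$, $\zpr{r}$ alone can rule out $q(\TT)=7$.

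What the paper actually proves is not a sharper value of any single parameter but a configuration-sensitive dichotomy (Lemma~\ref{lem:dtree}): every edge configuration $H$ of $\Gamma(\TT,5)$ satisfies $Z(H)\leq 13$ \emph{or} $Z_+(H)\leq 11$, according to how many of the optional distance-four edges $\{1,5\},\{6,10\},\{11,15\}$ it retains. Note that the two extremal configurations above each violate one of these bounds, but --- as the dichotomy asserts --- never both. The proof then plays the two branches against each other: $Z_+(\TT)=1$ pins $m_1=m_7=1$, so $m_2+\cdots+m_6=14>13$ eliminates the $Z$-branch; the $Z_+$-branch applied to the boundary segments yields $m_1+\cdots+m_5\leq 11$ and $m_3+\cdots+m_7\leq 11$, hence $m_2\geq 4$ and $m_6\geq 4$; and this contradicts $\zr{2}(\TT)\leq 7$ from Lemma~\ref{lem:bfzs}. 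Nothing of this mixed kind appears in your plan, and the fallback you anticipate --- a $K_{2,3}$-style sign/pigeonhole argument as in Lemma~\ref{lem:k23e} forcing some edge of $(\bA-\lambda_iI)(\bA-\lambda_jI)$ to be present --- cannot supply it: $\TT$ is a tree, so no two vertices are joined by two internally disjoint walks of the same length, the only entries of $p(\bA)$ that are forced to be nonzero correspond exactly to the singleton edges already built into the definition of $\Gamma(\TT,r)$, and every individual optional edge genuinely can vanish for suitable diagonal entries. So the mechanism you are counting on to sharpen the bounds does not exist for this graph, and the heart of the proof is missing.
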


Here, we provide an alternative proof of Theorem~\ref{thm:bftree} using just $Z$, $Z_+$, $Z^{(r)}$, and $Z^{(r)}$ (for $r= 2$ and $5$). This effectively abstracts all of the linear algebra in place of zero forcing parameters.

\begin{lemma} 
\label{lem.t321basic}
Let $\TT$ be the graph shown in Figure~\ref{fig.bftree}, we have
\begin{itemize}
\item $Z_+(\TT) = 1$,
\item $Z(\TT) = 4$, and
% \item $Z_+^{(2)}(\TT) = 6$, 
\item $Z^{(2)}(\TT) = 7$.
\end{itemize}
\label{lem:bfzs}
\end{lemma}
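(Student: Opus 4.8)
The plan is to establish each of the three zero forcing values for the tree $\TT$ in Figure~\ref{fig.bftree} separately, using the definitions of $Z_+$, $Z$, and $Z^{(2)}=\Zcheck(\Gamma(\TT,2))$ directly. I would first fix the vertex labeling from the figure so that the three ``arms'' are the paths $1\!-\!2\!-\!3\!-\!4\!-\!5$, $6\!-\!7\!-\!8\!-\!9\!-\!10$, and $11\!-\!12\!-\!13\!-\!14\!-\!15$, with the central vertex $16$ joined to $3$, $8$, and $13$. This structure is a tree, so I will exploit tree-specific forcing behavior throughout.

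For $Z_+(\TT)=1$, the key fact is that $\TT$ is a tree, and it is known that $Z_+(T)=1$ for every tree $T$ (positive semidefinite forcing on a forest behaves like coloring from a single vertex, since in PSD forcing a blue vertex forces into each uncolored component separately, and a tree breaks into branches at each vertex). I would verify this concretely: start with a single leaf colored blue, and check that the component-wise PSD rule lets a blue vertex force along each branch independently, so propagation never stalls. Since $Z_+\ge 1$ trivially, this gives $Z_+(\TT)=1$.

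For $Z(\TT)=4$, the natural route is via the path-cover / standard tree zero forcing theory: for a tree, $Z(T)=M(T)=P(T)$, the path cover number. The tree $\TT$ has three degree-three branch vertices ($3$, $8$, $13$) plus the central branching at $16$, and decomposing $\TT$ into the minimum number of vertex-disjoint induced paths should yield four paths, giving $Z(\TT)=4$. Concretely I would exhibit an explicit zero forcing set of size $4$ (e.g.\ one endpoint of each of four covering paths, such as leaves $\{1,6,11,16\}$ or a similar choice) and carry out the forcing chain, then argue no set of size $3$ can force by a standard counting/branch-vertex obstruction (each of the independent branch directions at $16$ after the arms meet requires its own ``token''). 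The main subtlety is being careful that the forcing order actually terminates with all vertices blue and that the lower bound $Z(\TT)\ge 4$ is justified rather than merely the upper bound.

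The hardest and most technical part will be $Z^{(2)}(\TT)=7$, since this requires reasoning about $\Gamma(\TT,2)$ and its edge configurations. By definition $Z^{(2)}(\TT)=\Zcheck(\Gamma(\TT,2))=\max_{H\in\confe(\Gamma(\TT,2))}Z(H)$, so I must both (i) produce a zero forcing set of size $7$ that works for \emph{every} edge configuration $H$, giving $Z^{(2)}(\TT)\le 7$, and (ii) exhibit a particular configuration $H$ requiring $7$, giving the matching lower bound. For the upper bound I would adapt the argument of Theorem~\ref{thm:tree}: with $k=2$, vertices of distance $0$ or $1$ from a leaf are colored outside a chosen length-$2$ path, and I'd track how the squared-graph adjacency (a vertex is adjacent in $\Gamma(\TT,2)$ to everything within distance $2$) lets forces propagate two steps at a time along each arm while the singleton edges $\{v_i,v_{i+2}\}$ remain forced to be present. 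The delicate point is that in $\Gamma(\TT,2)$ the branch vertices $3$, $8$, $13$, and especially $16$ acquire many new neighbors (all vertices within distance $2$), so I must verify the chosen set of $7$ still forces uniformly over all configurations and confirm via the path-example reasoning (as in Example~\ref{ex:path}) that no configuration escapes; the lower bound $7$ would come from selecting the configuration in which the relevant induced subgraph on the high-degree central region has no available singleton-edge force until $7$ vertices are pre-colored. I expect verifying uniformity over all $H\in\confe(\Gamma(\TT,2))$ to be the main obstacle, and I would lean on the computational check referenced via \cite{mr_methematica} to corroborate the combinatorial argument.
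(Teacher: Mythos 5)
Your handling of the first two items is sound and in fact more principled than the paper's, which simply appeals to direct computation (or a known algorithm): you invoke the standard facts that $Z_+(T)=1$ for every tree and that $Z(T)=M(T)=P(T)$ for trees, and your candidate set $\{1,6,11,16\}$ does force $\TT$, while the six leaves together with the branching structure at $3,8,13,16$ give $P(\TT)=4$. Those arguments can be completed without difficulty.

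The genuine gap is in the third item, which is the only part of the lemma the paper actually needs for Theorem~\ref{thm:bftree}. Your concrete plan for the upper bound is to adapt Theorem~\ref{thm:tree} with $k=2$, but for this tree that theorem colors $(L_1\cup L_2)\setminus V(P)$ and yields only $\zr{2}(\TT)\leq \ell_1+\ell_2-1=6+6-1=11$, nowhere near $7$; no amount of tracking the squared adjacency rescues that count, because the set it produces (all six leaves and all six of their neighbors, minus a short path) is simply the wrong set. The missing idea is a different initial set: the paper uses $\{1,2,6,7,11,12,16\}$, i.e.\ the leaf and its neighbor at the tip of each arm \emph{plus the center}. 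The verification is then uniform over all $H\in\confe(\Gamma(\TT,2))$: in $\Gamma(\TT,2)$ every distance-$2$ pair of $\TT$ is a singleton edge (exactly one lazy walk), hence present in every configuration and available for forces, while every distance-$1$ pair is a multiedge (three lazy walks), hence optional and never forceable; so the forces $1\to3$, $6\to8$, $11\to13$, then $2\to4$, $7\to9$, $12\to14$, then $3\to5$, $8\to10$, $13\to15$ succeed regardless of $H$. Note that the center $16$ cannot be omitted from such a set: starting from $\{1,2,6,7,11,12\}$ in the configuration realizing all multiedges, after $3,8,13$ turn blue each of $2,7,12$ still has two white neighbors (e.g.\ $4$ and $16$) and each of $3,8,13$ has three (e.g.\ $4,5,16$), so the process stalls. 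As for the matching lower bound $\zr{2}(\TT)\geq 7$, your plan is only a sketch, but this is the less serious defect: the paper itself defers that direction to computer exhaustion and explicitly notes that only the inequality $\zr{2}(\TT)\leq 7$ is used in the application.
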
 

\begin{proof}
The fact that $Z_+(\TT) = 1$ and $Z(\TT) = 4$ come from direct computation (or the algorithm in \cite{FH07}).  Using the labels in Figure \ref{fig.bftree}, $\{1, 2, 6, 7, 11, 12, 16\}$ is a zero forcing set for any edge configuration of $\Gamma(\TT,2)$, so $\zr{2}(\TT) \leq 7$.  (This direction of inequality is all we need for proving Theorem~\ref{thm:bftree}.)  The optimality of this zero forcing sets can be verified by exhaustion via computer, see \cite{mr_software}.
%%% ZFS for Z+2 is $\{1, 2, 4, 6, 7, 12\}$
\end{proof}

\begin{lemma}
\label{lem:dtree}
For any edge configuration $H$ of $\Gamma(\TT,5)$, either $Z(H) \le 13$ or $Z_+(H) \le 11$.  
\end{lemma}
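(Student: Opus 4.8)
The plan is to first pin down the structure of $\Gamma(\TT,5)$ and then reduce both bounds to combinatorial statements about a single, highly symmetric backbone graph. Using the labeling of Figure~\ref{fig.bftree}, I would compute all pairwise distances in $\TT$. Writing $16$ for the central vertex and grouping the others by distance to $16$, set $D_1=\{3,8,13\}$, $D_2=\{2,4,7,9,12,14\}$, and $D_3=\{1,5,6,10,11,15\}$, where the three legs are $\{1,\dots,5\}$, $\{6,\dots,10\}$, $\{11,\dots,15\}$. In a tree the number of lazy walks of length at most $5$ between two vertices is $1$ exactly when their distance is $5$, is $0$ when it exceeds $5$, and is at least $2$ otherwise. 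Hence in every $H\in\confe(\Gamma(\TT,5))$: (i) the distance-$5$ pairs, which are exactly those joining a $D_3$ vertex to a $D_2$ vertex in a \emph{different} leg, are forced edges; (ii) the distance-$6$ pairs, exactly the cross-leg pairs inside $D_3$, are forced non-edges; and (iii) every remaining pair (distance at most $4$, including all pairs meeting $D_1\cup\{16\}$) is free. Thus the forced edges form a fixed bipartite graph $F$ between $D_3$ and $D_2$, namely $K_{6,6}$ minus the three leg blocks $K_{2,2}$, while $3,8,13,16$ carry no forced edge, and the only possible edges inside $D_3$ are the leg edges $\{1,5\},\{6,10\},\{11,15\}$, so $H[D_3]$ is always a subgraph of $3K_2$.

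Next I would record two robust forcing strategies valid for every configuration. For the classical bound I would use the fort duality, namely $Z(H)\le 13$ iff some $3$-set contains no nonempty fort, and exhibit the symmetric escape family $\{1,6,2\}$ together with its images under the leg symmetry: in sparse configurations this is fort-free because the forced edge $4\sim 6$ lets $4$ force, and then $5\to 1$ and $6\to 2$ complete the forcing, giving $Z(H)\le 13$. For the positive semidefinite bound I would exploit that $F$ makes each $D_3$ vertex adjacent to four $D_2$ vertices; coloring $3,8,13,16$ together with a suitable $7$-element seed inside $D_3\cup D_2$ (for instance both ends of one leg edge and one further vertex from each remaining part), I would verify that when the leg edges of $D_3$ are isolated the uncolored remainder is always PSD-forced, since after a few forces it splits into singletons and short paths, each admitting a vertex with a unique uncolored neighbor in its component. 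This produces a PSD forcing set of size $11$, so $Z_+(H)\le 11$.

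The dichotomy then amounts to showing at least one strategy always applies. The key observation is that the only way to destroy all classical escape sets, and thereby force $Z(H)\ge 14$, is to switch on enough free edges incident to $D_1\cup D_2$ to turn each escape $3$-set into a fort; but precisely those edges connect the otherwise isolated leg edges of $D_3$ to the rest of the graph, which is exactly what the PSD strategy needs to keep its seed at $11$. Conversely, if few free edges are present then $H$ is close to $F$ plus four isolated vertices, a fort-free $3$-set survives, and $Z(H)\le 13$. Using the $3$-fold leg symmetry and the fact that $H[D_3]\subseteq 3K_2$, this reduces to a finite case analysis over the intra-$D_2$ edges and the free edges meeting $D_1\cup\{16\}$.

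The main obstacle is that neither $Z$ nor $Z_+$ is monotone under edge addition, so one cannot simply settle the sparsest and densest configurations and interpolate; the free edges genuinely interact with both parameters and must be tracked throughout. Consequently the heart of the argument is the bookkeeping establishing that ``every classical escape is blocked'' implies ``the PSD seed drops to $11$,'' and I expect this implication to require the most care. As elsewhere in the paper (e.g.\ the optimality claim in Lemma~\ref{lem:bfzs}), the finitely many residual cases can be confirmed by the same exhaustive computation.
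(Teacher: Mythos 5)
Your structural analysis of $\Gamma(\TT,5)$ is correct and matches the paper's setup: the forced (singleton) edges are exactly the distance-$5$ pairs, i.e.\ each leaf joined to the four $D_2$ vertices of the other legs; the distance-$6$ pairs (cross-leg leaf pairs) are never edges; everything else is free. But the heart of the lemma --- a criterion, covering \emph{every} configuration $H$, that decides which of the two bounds holds --- is never established in your proposal. You explicitly defer it (``the bookkeeping establishing that every classical escape is blocked implies the PSD seed drops to $11$,'' to be settled by ``finite case analysis'' or ``exhaustive computation''), and that deferred implication is precisely the content of the lemma, not a residual detail. Worse, the criterion you gesture at is the wrong one: you propose to split on the density of free edges incident to $D_1\cup D_2\cup\{16\}$ and claim these edges ``genuinely interact with both parameters and must be tracked throughout.'' They do not. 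Both strategies can be made completely insensitive to those edges, and the non-monotonicity of $Z$ and $Z_+$ under edge addition is a red herring here.

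The clean dichotomy (the one the paper uses) depends only on the three optional intra-leg leaf edges $\{1,5\}$, $\{6,10\}$, $\{11,15\}$. If at least two of them are present, then $V(H)\setminus\{5,10,15\}$ is a classical zero forcing set: $1\rightarrow 5$ and $6\rightarrow 10$ (say), and the possibly missing third edge is replaced by the forced distance-$5$ edge $\{2,15\}$, giving $2\rightarrow 15$ after $5$ and $10$ are blue; this works no matter which free edges are present, because the forcing vertices' optional neighbors are all blue, and the white vertices $5,10,15$ are pairwise at distance $\geq 4$ in $\TT$ with no other white vertex adjacent to the forcers. If instead at least two of the three are absent, then $\{5,6,10,11,15\}$ (relabeling legs so the two absent edges avoid the leg of $5$'s partner) is an independent set in $H$, since cross-leg leaf pairs are at distance $6$; hence its complement, $11$ vertices, is a PSD forcing set: each white vertex is its own component of $H$ minus the blue set and has a blue neighbor via a forced edge, so it is forced immediately. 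Again no other free edge matters. Your proposal misses this split entirely --- your classical escape set $\{1,2,6\}$ is analyzed only ``in sparse configurations,'' and even there the claimed force $5\rightarrow 1$ fails when $\{1,5\}$ is absent (the force must come from a forced edge such as $7\rightarrow 1$). So what you have is a correct description of the multigraph plus two strategy sketches, but no proof that one of them applies to every configuration.
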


\begin{proof}
Let $H$ be a edge configuration of $\Gamma(\TT,5)$.  We will use the labels in Figure~\ref{fig.bftree}.

Suppose $\{1,5\}$, $\{6,10\}$, and $\{11,15\}$ are edges in $H$.  Then the subset $S_1 = V(H)\setminus\{5,10,15\}$ is a zero forcing set of $H$ since $1,6,11$ will force $5,10,15$ to be blue, respectively.  Therefore, $Z(H)\leq 13$ in this case.

In fact, $S_1$ is still a zero forcing set of $H$ even if one of $\{1,5\}$, $\{6,10\}$, and $\{11,15\}$ is not an edge, say $\{11,15\}$.  One may perform the forces $1\rightarrow 5$ and $6\rightarrow 10$, then $2\rightarrow 15$ to color every vertex.  Similarly, $Z(H)\leq 13$ in this case. 

Suppose at least two of $\{1,5\}$, $\{6,10\}$, and $\{11,15\}$ are not an edge in $H$, say $\{6,10\}$, and $\{11,15\}$.  (Here $\{1,5\}$ might or might not be an edge of $H$.)  By the assumption, $\{5,6,10,11,15\}$ is an independent set since two leaves from different branches, e.g., $5$ and $6$, are of distance $6$ and are not adjacent to each other in $H$.  Let $S_2 = V(H)\setminus\{5,6,10,11,15\}$.  Then $S_2$ is a PSD zero forcing set of $H$ since each of $\{5,6,10,11,15\}$ is adjacent to a blue vertex in $H$ (of distance $5$ in $\TT$) and this vertex can force it to be blue.  Therefore, $Z_+(H)\leq 11$ for the remaining cases.
\end{proof}

\begin{corollary}
\label{cor.t321}
Let $A\in\calS(\TT)$.  Then one of the following holds.
\begin{itemize}
    \item The sum of any five eigenvalue multiplicities of $A$ is at most $13$.
    \item The sum of any five consecutive eigenvalue multiplicities of $A$ is at most $11$.
\end{itemize}
\end{corollary}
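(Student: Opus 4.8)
The plan is to combine Corollary~\ref{cor.t321} (which is the statement I am proving) directly from Lemma~\ref{lem:dtree}, translating the purely combinatorial zero forcing statement into the linear-algebraic one via Theorems~\ref{thm:main} and \ref{thm:mainzp}. The key observation is that a sum of five (not necessarily consecutive) eigenvalue multiplicities is controlled by $\zr{5}(\TT)$ through Theorem~\ref{thm:main}, while a sum of five \emph{consecutive} multiplicities can be controlled by the PSD variant $\zpr{5}(\TT)$ through Theorem~\ref{thm:mainzp}, since five consecutive eigenvalues form a single (non-boundary, at worst) segment whose evenly consecutive order is at most $5$ rounded up to an even number---but here I must be careful, and this is exactly the subtlety I will address below.

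First I would fix $A\in\calS(\TT)$ and an arbitrary choice of five eigenvalues. For the first bullet, I take the five multiplicities $m_{i_1},\ldots,m_{i_5}$ corresponding to any five distinct eigenvalues and form $p(x)=\prod_{j=1}^5(x-\lambda_{i_j})$, a degree-$5$ polynomial. Then $p(\bA)\in\calS(H)$ for some $H\in\confe(\Gamma(\TT,5))$ with $\nul(p(\bA))=\sum_{j=1}^5 m_{i_j}$. By Lemma~\ref{lem:dtree}, this particular $H$ satisfies either $Z(H)\le 13$ or $Z_+(H)\le 11$, and since $\nul(p(\bA))\le M(H)\le Z_+(H)\le Z(H)$ (using Theorem~\ref{aimgroup} and $M\le Z_+\le Z$), in the first case we immediately get $\sum m_{i_j}\le Z(H)\le 13$. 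For the second bullet I need the polynomial to be positive-semidefinite-producing, so I specialize to five \emph{consecutive} eigenvalues $\lambda_a,\lambda_{a+1},\ldots,\lambda_{a+4}$ and again form $p(x)=\prod(x-\lambda_{a+k})$; the nullity of $p(\bA)$ is the sum of the five consecutive multiplicities, and for the $H$ given by Lemma~\ref{lem:dtree}'s second case we get $\sum m\le Z_+(H)\le 11$.

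The main obstacle, and the reason the statement is phrased as a disjunction rather than a clean single bound, is that Lemma~\ref{lem:dtree} does \emph{not} guarantee that a single fixed configuration $H$ simultaneously has small $Z$ and small $Z_+$; rather it guarantees a dichotomy depending on which edges of $\Gamma(\TT,5)$ are present in $H$. So the crux is to check that the \emph{same} polynomial $p$ produces the configuration to which the relevant branch of Lemma~\ref{lem:dtree} applies. For the first bullet, any product of distinct linear factors $(x-\lambda_{i_j})$ gives a matrix whose off-diagonal sign pattern realizes \emph{some} edge configuration $H$ of $\Gamma(\TT,5)$, and Lemma~\ref{lem:dtree} applies to \emph{every} such $H$, so the bound $\max\{13, \text{something}\}$ reduces to: if the $H$ falls in the $Z$-branch we use $13$, and if it falls in the $Z_+$-branch we note $Z(H)\ge Z_+(H)\ge M(H)\ge \nul(p(\bA))$ is still bounded by $Z(H)$, which we must then bound by $13$ as well---here I would observe that in the $Z_+$-branch we actually have the stronger $\nul(p(\bA))\le 11\le 13$, so in \emph{all} cases the sum of any five multiplicities is at most $13$, giving the first bullet unconditionally. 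For the second bullet, the consecutive-eigenvalue hypothesis lets me invoke Theorem~\ref{thm:mainzp} with a genuinely PSD-producing polynomial, so that $\nul(p(\bA))\le Z_+(H)$; then whichever branch $H$ lands in, $Z_+(H)\le Z(H)$ and we use $\min$ of the two available bounds. I would conclude by noting that the two bullets together simply restate Lemma~\ref{lem:dtree} at the level of multiplicity sums: whichever of the two inequalities $Z(H)\le 13$ or $Z_+(H)\le 11$ holds for the configuration realized by $A$'s spectrum, the corresponding multiplicity-sum bound follows. $\qed$
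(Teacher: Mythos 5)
There is a genuine gap, and it sits exactly at the point you yourself called ``the crux.'' In the branch where Lemma~\ref{lem:dtree} only guarantees $Z_+(H)\le 11$, you bound the nullity by the chain $\nul(p(\bA))\le M(H)\le Z_+(H)\le Z(H)$, citing ``$M\le Z_+\le Z$.'' The inequality $M(H)\le Z_+(H)$ is false in general: the paper only provides $M_+(H)\le Z_+(H)$, and, for example, a star has $M(K_{1,n-1})=n-2$ while $Z_+(K_{1,n-1})=1$. The PSD forcing number bounds nullities of \emph{positive semidefinite} matrices only, and $p(\bA)=\prod_j(\bA-\lambda_{i_j}I)$ built from five arbitrary distinct eigenvalues is indefinite ($p$ has odd degree and changes sign between its roots, so $p(\lambda)<0$ at some eigenvalues of $\bA$ whenever the chosen five are not a left-boundary block). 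Consequently your conclusion that the first bullet holds \emph{unconditionally} with bound $13$ is unjustified: when the configuration realized by $p(\bA)$ is missing two of $\{1,5\},\{6,10\},\{11,15\}$, Lemma~\ref{lem:dtree} gives no control on $Z(H)$ at all, and such configurations really can have $Z(H)>13$ --- the configuration keeping every potential edge except those three is $K_{16}$ minus the edges of a $K_6$ on the six leaves, a complete split graph with $Z=14$. The corollary is phrased as a dichotomy precisely because neither bound can be established for all choices; indeed, in the proof of Theorem~\ref{thm:bftree} the hypothetical matrix has $m_2+\cdots+m_6=14$, i.e.\ the first bullet \emph{fails} there, and only then is the second bullet invoked. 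If your unconditional $13$-bound were derivable from Lemma~\ref{lem:dtree}, that proof would end after one line and the dichotomy would be pointless.

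The second bullet has the same defect: for five consecutive eigenvalues $\lambda_a,\ldots,\lambda_{a+4}$ forming an \emph{interior} segment ($a>1$ and $a+4<q$), the degree-$5$ polynomial $\prod_k(x-\lambda_{a+k})$ is negative at every eigenvalue below $\lambda_a$, so neither $p(\bA)$ nor $-p(\bA)$ is PSD. This is exactly why Theorem~\ref{thm:mainzp} charges an interior $5$-term segment an evenly consecutive order of $6$, which would require $\zpr{6}(\TT)$ and configurations of $\Gamma(\TT,6)$ --- outside the scope of Lemma~\ref{lem:dtree}. You flagged this subtlety in your opening paragraph but then asserted PSD-ness anyway; up to sign, the degree-$5$ polynomial is PSD-producing only for boundary segments, which is all that the application in Theorem~\ref{thm:bftree} actually uses. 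Finally, note that the statement's disjunction is quantified over the matrix $A$ (either the $13$-bound holds for \emph{every} quintuple, or the $11$-bound holds for \emph{every} consecutive quintuple); your argument reasons one quintuple at a time and, since each choice of five eigenvalues produces its own configuration which may land in a different branch of Lemma~\ref{lem:dtree}, per-choice reasoning of this kind cannot by itself assemble the matrix-level dichotomy that the proof of Theorem~\ref{thm:bftree} consumes.
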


We are now ready to prove Theorem~\ref{thm:bftree}

\begin{proof}[Proof of Theorem~\ref{thm:bftree}]
It is obvious that $\diam(\TT) + 1 = 7$, and it is known that $7$ is a lower bound for $q(\TT)$.  Suppose, for the purpose of yielding a contradiction, that $q(\TT) = 7$.

Let $A$ be a matrix in $\calS(\TT)$ with $7$ distinct eigenvalue 
\[\lambda_1 \leq \lambda_2 \leq \cdots \leq \lambda_7.\]
Let $m_i$ be the multiplicity of $\lambda_i$ for $i=1,\ldots, 7$.  Since $Z_+(\TT)=1$ Lemma~\ref{lem:bfzs}, $m_1=m_7=1$.  

According to Corollary~\ref{cor.t321}, one of the two cases must holds.  Since 
\[m_2 + \cdots + m_6 = 16 - m_1 - m_7 = 14,\]
the first case in Corollary~\ref{cor.t321} does not hold.  Therefore the sum of any five consecutive eigenvalue multiplicities of $A$ is at most $11$.  However, this means \[m_1 + \cdots + m_5 \leq 11 \implies m_6 \geq 4\]
and 
\[m_3 + \cdots + m_7 \leq 11 \implies m_2 \geq 4.\]
Consequently, $m_2 + m_6 \geq 8$, violating the fact $\zr{2}(\TT)\leq 7$ by Lemma~\ref{lem.t321basic}.  Therefore, $q(\TT) = 8$.  (The adjacency matrix of $\TT$ with the diagonal entry of $16$ set to $1$ has $8$ distinct eigenvalues.)

\end{proof}

\begin{remark}
Kim and Shader~\cite{kim2009smith} generalized $\TT$ into the family of $(k,\ell)$-whirl graphs, where $k$ is the degree of the center vertices and $\ell$ is the number of vertices of the pending paths starting from the third level.  Thus, $\TT$ is the $(3,2)$-whirl.  Similar arguments in this subsection show that $q(W')>\diam(W) + 1$ for all $(3,\ell)$-whirl.  However, Kim and Shader~\cite{kim2009smith} showed using more technical methods that $q(W')\geq \frac{9}{8}\diam(W') + \frac{1}{2}$ for all $(3,\ell)$-whirl.
\end{remark}

We also have a more general bound for $q(G)$:

\begin{theorem}\label{thm:qbound}
Let $G$ be a connected graph on $n$ vertices. Choose a positive even integer $k \le \diam(G)$. Then, the minimum number of distinct eigenvalues for any matrix $\bA \in \mathcal{S}(G)$, $q(G)$, obeys

\[ q(G) \ge \frac{k n - k^2 Z(G)} {Z^{(k)}_+(G)}. \] 
\end{theorem}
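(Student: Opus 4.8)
The plan is to let $q = q(G)$ and let $\bA \in \calS(G)$ be a matrix achieving this minimum, so $\bA$ has exactly $q$ distinct eigenvalues $\lambda_1 < \cdots < \lambda_q$ with multiplicities $m_1, \ldots, m_q$ summing to $n$. The strategy is to bound $n = \sum_{i=1}^q m_i$ from above in terms of $q$, $k$, $Z(G)$, and $\zpr{k}(G)$, and then solve the resulting inequality for $q$. The key engine is Theorem~\ref{thm:mainzp}, which bounds $\sum_{\lambda_i \in S} m_i$ by $\zpr{r}(G)$ whenever $r = e_\Lambda(S)$. Since $k$ is even, if I take $S$ to be a block of $k$ \emph{consecutive} eigenvalues lying strictly in the interior (so $S$ is a non-boundary segment), then $e_\Lambda(S) = 2\lceil k/2\rceil = k$, and Theorem~\ref{thm:mainzp} gives $\sum_{\lambda_i \in S} m_i \le \zpr{k}(G)$.

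The core idea is to cover the eigenvalues by consecutive blocks of length $k$ and sum the per-block bounds. First I would handle the boundary: the extreme eigenvalues are simple because $Z_+(G)$ controls boundary multiplicities — more precisely, for the smallest and largest eigenvalue the relevant evenly-consecutive order is $1$ (a singleton boundary segment), so $m_1, m_q \le \zpr{1}(G) = Z_+(G)$, and more generally any boundary segment of length $j$ contributes at most $\zpr{j}(G)$. I would then partition the $q$ eigenvalues into roughly $q/k$ interior windows of size $k$ (being careful at the two ends where boundary segments occur), apply the bound $\zpr{k}(G)$ to each interior window, and thereby obtain an estimate of the form $n \le (\text{number of windows}) \cdot \zpr{k}(G)$ together with the separate handling of the $k^2 Z(G)$ correction term. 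The appearance of $k^2 Z(G)$ suggests that the double-counting incurred by overlapping windows (each eigenvalue being covered by up to $k$ shifted windows, each of total multiplicity at most $\zpr{k}(G)$, against the crude global bound $\sum_i m_i \le q \cdot Z(G)$ from $m_i \le M(G) \le Z(G)$) is what produces the $-k^2 Z(G)$ in the numerator after clearing denominators.

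Concretely, I would use a sliding-window / averaging argument: for each starting index, the window of $k$ consecutive multiplicities has sum at most $\zpr{k}(G)$, and summing these overlapping windows over all valid starting positions counts each interior multiplicity exactly $k$ times. This yields $k \cdot (n - \text{boundary terms}) \le (q - k + 1)\zpr{k}(G)$ or a similar relation; bounding the boundary terms by $M(G) \le Z(G)$ and the edge-effect terms by multiples of $k Z(G)$ gives $kn \le q\,\zpr{k}(G) + k^2 Z(G)$, which rearranges to exactly the claimed $q \ge (kn - k^2 Z(G))/\zpr{k}(G)$. The main obstacle I anticipate is getting the boundary bookkeeping exactly right: the windows near the two ends of the spectrum either fall off the edge or become boundary segments (whose $e_\Lambda$-value drops, weakening the bound), and one must verify that the total error introduced at both ends is controlled by precisely $k^2 Z(G)$ rather than something larger. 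Confirming that the condition $k \le \diam(G)$ guarantees enough eigenvalues ($q \ge k$, since $q \ge \widehat{k} \ge \diam(G)+1 > k$) so that at least one interior window exists, and that the counting constant works out cleanly, is where the delicate part of the argument lies.
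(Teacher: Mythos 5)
Your proposal is correct, and it reaches the theorem by a genuinely different counting argument than the paper's. The paper partitions the spectrum into $\lfloor q/k\rfloor$ \emph{disjoint} blocks of $k$ consecutive eigenvalues, each bounded by $\zpr{k}(G)$ via Theorem~\ref{thm:mainzp} (the evenness of $k$ guarantees $e_\Lambda(S)=k$ whether or not a block touches the boundary), and bounds each of the $q \bmod k$ leftover multiplicities by $M(G)\le Z(G)$; this gives $n \le \frac{q}{k}\zpr{k}(G) + kZ(G)$, which rearranges to the stated bound. Your overlapping sliding-window scheme also closes, and the "delicate bookkeeping" you flagged works out exactly: with $q-k+1\ge 1$ windows (guaranteed, as you note, by $q \ge \diam(G)+1 > k$), the multiplicity $m_j$ lies in $c_j\le k$ windows, and $\sum_j (k-c_j) = qk-(q-k+1)k = k(k-1)$, so bounding each window sum by $\zpr{k}(G)$ and each deficit term by $m_j\le Z(G)$ yields $kn \le (q-k+1)\zpr{k}(G)+k(k-1)Z(G) \le q\,\zpr{k}(G)+k^2Z(G)$, which is precisely what is needed; indeed your route gives the marginally sharper $q \ge k-1+\bigl(kn-k(k-1)Z(G)\bigr)/\zpr{k}(G)$ before weakening. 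The paper's disjoint partition buys brevity (no double counting to track); your averaging buys an exact edge-effect constant ($k(k-1)$ rather than the crude $k^2$). Two small inaccuracies in your write-up, neither load-bearing: the extreme eigenvalues need not be \emph{simple} (only $m_1,m_q\le Z_+(G)$), and your worry that windows touching the boundary weaken the bound is backwards --- a boundary segment of $k$ consecutive eigenvalues has $e_\Lambda(S)=|S|=k$, so every window of length $k$, boundary or not, enjoys the same $\zpr{k}(G)$ bound, and a drop in $e_\Lambda$ could only strengthen, never weaken, the estimate.
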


\begin{proof}
Choose  $\bA \in \mathcal{S}(G)$ with the minimum number of distinct eigenvalues, $q = q(G)$, and let  $m_1, \ldots, m_q$ be the multiplicities. Necessarily, $n = m_1+ \cdots+ m_q$. On the other hand, by Theorem \ref{thm:mainzp}, the sum $m_i + m_{i+1} + \ldots + m_{i+k-1} \le {Z^{(k+1)}_+}$ for any applicable $i$, and any remaining $m_j \le Z(G)$. Therefore we have,

\begin{eqnarray*}
n &=& m_1 +  \cdots+ m_q \\
&\le&\left  \lfloor \frac{q}{k} \right \rfloor {Z^{(k)}_+(G)} +  (q \% k) Z(G) \\
&=& \frac{q}{k} {Z^{(k)}_+(G)} - \frac{(q \% k)}{k}  {Z^{(k)}_+(G)} +  (q \% k) Z(G) \\
&\le & \frac{q}{k} {Z^{(k)}_+(G)}  +  (q \% k) Z(G) \\
&\le & \frac{q}{k} {Z^{(k)}_+(G)}  +  k Z(G) \\
\end{eqnarray*}
where $(q \% k)$ denotes the remainder of $q$ divided by $k$. Solving for $q$ completes the proof.
\end{proof}

%%% six
\subsection{\texorpdfstring{IEP-$G$}{IEPG} for graphs on six vertices}
\label{subsec:six}

One of the original motivations of this study was to verify the results from \cite{ordered_iepg} using {\it only} zero forcing parameters. In contrast, \cite{ordered_iepg} utilizes an array of different techniques to narrow down the realizable lists.

As it turns out, using zero forcing parameters on powers of graphs is$\ldots$ powerful$\ldots$ as we are able to remove almost all unobtainable multiplicity lists for graphs on connected 6 vertices. To achieve this, we apply the following results
\begin{itemize}
\item Theorem \ref{thm:main} for $r=1,2,3$, and
\item Theorem \ref{thm:mainzp} for $r=1,2$ using Theorem \ref{thm:pathk23e} regarding $K_{2,3}$ as an induced subgraph where appropriate.
\end{itemize}

Upon implementing this, we discovered that for connected graphs on six vertices, there is no distinction between computing and applying $\Zcheck(\Gamma(G,r))$ %(the maximum zero forcing number over {\it all} loop and edge configurations of the power graph) 
and $Z(\Gamma(G,r))$  %(the zero forcing number of the power graph as a multigraph) 
or even $\Zpcheck(\Gamma(G,r))$ versus $Z_+(\Gamma(G,r))$. However, conceivably, there may be a case, for larger graphs, where  $\Zcheck(\Gamma(G,r)) < Z(\Gamma(G,r))$. However, the computation times for $Z$ and $Z_+$ are substantially faster than $\Zcheck$ and $\Zpcheck$ respectively.

The end result is that the zero forcing parameters are able to narrow down the realizable lists for all but 13 graphs on 6 vertices. The lists that remain from our method are listed in Table \ref{tab:failedsix}. In most all cases, these remaining cases are the same graphs and lists requiring additional analysis or auxiliary results within \cite{ordered_iepg}. In that previous study, the authors utilize previous known results on the minimum number of distinct eigenvalue (e.g., ``$q$'') as well as specialized results which reduces to six exception cases. In contrast, we utilize no prior knowledge on the number of distinct eigenvalues and simply compute zero forcing parameters. In many cases (thought not all), the zero forcing parameters are able to accurately imply the minimum number of distinct eigenvalues correctly. The specialized cases include unicyclic graphs with an odd cycle \cite{minormult}, eigenvalues of trees \cite{JDS03}, the cycle \cite{cycleinverse} or other exceptional cases \cite{ordered_iepg} and are summarized in Table \ref{tab:failedsix}.

\begin{table}[!h]
\begin{tabular}{|l|l|l|} \hline
Graph & Failed Multiplicity Lists & Reason\\ \hline
\texttt{G77}& 1221 & Parter--Wiener Theorem \cite{JDS03}  \\
\texttt{G78}& 1221 & Parter--Wiener Theorem \cite{JDS03}  \\
\texttt{G92}& 2112 & Odd-Unicyclic \cite{minormult} \\
\texttt{G95}& 2112 & Odd-Unicyclic \cite{minormult}  \\
\texttt{G100}& 2112 & Odd-Unicyclic \cite{minormult} \\
\texttt{G104}& 2112 & Odd-Unicyclic \cite{minormult} \\
\texttt{G105}& (2,1,2)1, 1(2,1,2) & Cycle \cite{Ferguson80} \\ 
\texttt{G117}& 132, 213, 231, 312  & Exceptional in \cite{ordered_iepg} \\
\texttt{G121}& 132, 231 & Exceptional in \cite{ordered_iepg}  \\ 
%\texttt{G125}& 132, 231 & Exceptional in \cite{ordered_iepg}  \\ 
\texttt{G133}& 132, 231 & Exceptional in \cite{ordered_iepg}  \\ 
%\texttt{G138}& 132, 231 & Exceptional in \cite{ordered_iepg}  \\ 
\texttt{G153}& 312, 213 & Exceptional in \cite{ordered_iepg}  \\
\texttt{G187}& 33 & Wheel, Example \ref{ex:wheel} \\ 
\texttt{G189}& 33 & Previous Results on $q(G)$ (see \cite{BHPRT18}) \\
\hline
\end{tabular}
\caption{A table of the multiplicity lists that Theorems \ref{thm:main}, \ref{thm:mainzp}, and \ref{thm:pathk23e} are unable to rule out. These  multiplicity lists can be ruled out by other methods as cited on the right.}
\label{tab:failedsix}
\end{table}

We remark that the method induced by Theorems~\ref{thm:main}, \ref{thm:mainzp}, and \ref{thm:pathk23e} are able to provide more streamlined certificates for the viable multiplicity lists for \texttt{G125} and \texttt{G138} (as opposed to  \cite{ordered_iepg}) as well as  \texttt{G170}, \texttt{G179} and \texttt{G187} (as opposed to \cite{BHPRT18}).

% 125 138: remaining cases in Steve's ordered_iepg
% 170, 179, 187: q results from Leslie's BHPRT18

\section{Skew-Symmetric Matrices}\label{sec:skew}

A skew-symmetric matrix with real entries has $\bA = -\bA\trans$. One basic fact that follows is that all of the eigenvalues of a skew-symmetric matrix are purely imaginary, and in particular, the eigenvalues of $\ii \bA$ are necessarily real. We denote the eigenvalues of  skew-symmetric matrix $\bA$ as $\lambda_i$ with $\im(\lambda_1) \le \cdots \le \im(\lambda_n)$. Note that since the eigenvalues of a matrix with real entries must come in conjugate pairs, we have that $\im( \lambda_{k}) = -\im( \lambda_{n+1-k})$. Since the eigenvalues of $\bA$ can be ordered along the imaginary axis, we can study the ordered eigenvalue multiplicity list problem for skew-symmetric matrices. We will let $m_1, \ldots, m_\ell$ denote the multiplicities of the eigenvalues $\lambda_1,\ldots,\lambda_n$ of $\bA$, and the list $(m_1,\ldots,m_\ell)$ is called the \emph{ordered multiplicity list} of $A$. Indeed, the skew-symmetry leads to additional rules and constraints not present in other cases of the eigenvalue multiplicity list problem.  

For this section regarding skew-symmetric matrices, we will let $Z_-(G)$ denote the skew-forcing number of $G$. As it turns out for a general graph, $Z_-(G) = Z(\gO)$ where $\gO$ is a looped graph (perhaps a multigraph) with no loops.  We will let $\calskew$ denote all $n \times n$ skew-symmetric matrices whose underlying graph is $G$. Lastly, we will generalize the notation $\Gamma(G,r)$ for some integer $r$ into $\Gamma(G,L)$ for some set $L$ of integers.  We define $\Gamma(G, L)$ as a multigraph on the vertex set $V(G)$ such that the number of edges between $i$ and $j $ is the number of (non-lazy) walks from $i$ to $j$ with length in the set $L$.  Therefore, $\Gamma(G,r) = \Gamma(G,\{0,\ldots,r\})$.

To illustrate the difference between the general case and the skew-symmetric case, we have the following.

\begin{lemma}\label{lem.listskew}
Let $G$ be a graph on $n$ vertices, let $\bA \in \calskew$ and let $m_1, \ldots, m_\ell$ be the ordered eigenvalue multiplicity list of $\bA$.

Then,
\begin{enumerate}
\item \label{pal} the list $m_1, m_2 \ldots, m_\ell$ must be palindromic (i.e., the same as its reverse).
\item \label{noncenterskew} for $k \ne \frac{\ell+1}{2}$ ($\ell$ is odd; or any $k$ for $\ell$ even), $m_{k} \le Z(\gloop)$
\item \label{psdskew} for $k=1 \text{ or } \ell$, $m_{k} \le Z_+(G)$.
\item \label{centerskew} if $\ell$ is odd (which is necessarily true if $n$ is odd), then $m_{\frac{\ell+1}{2}} \le Z_-(G)$
\item \label{mirrortwo} for any $k \ne \frac{\ell+1}{2}$,  $m_k + m_{\ell+1-k} \le  Z(\Gamma(G,\{2\})$
\end{enumerate}
\end{lemma}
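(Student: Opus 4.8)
The strategy throughout is to replace $\bA$ by the Hermitian matrix $\ii\bA$, whose eigenvalues are the real numbers $\mu_1\le\cdots\le\mu_n$ determined by $\lambda_j=\ii\mu_j$, and whose zero/nonzero pattern coincides with that of $\bA$. Shifts of $\ii\bA$, and the real square $\bA^2$, then produce matrices whose nullity is exactly the relevant multiplicity (or multiplicity sum) and whose graph is a prescribed loop configuration or a power of $G$; the matching zero forcing bound closes each part. I would handle part (1) first since the remaining parts combine with it.

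For palindromicity (part (1)), $\bA$ is real, so its characteristic polynomial has real coefficients and nonreal eigenvalues occur in conjugate pairs of equal multiplicity. Since every eigenvalue is purely imaginary, $\overline{\ii\mu}=-\ii\mu$, so the spectrum is symmetric under $\lambda\mapsto-\lambda$, a map that reverses the ordering by imaginary part. Hence the $k$-th and $(\ell+1-k)$-th distinct eigenvalues are negatives of one another with the same multiplicity, giving $m_k=m_{\ell+1-k}$. For the single-multiplicity bounds, fix a non-central index $k$, so $\mu_k\ne 0$ and $m_k=\nul(\ii\bA-\mu_k I)$. This is Hermitian with off-diagonal support exactly $G$ and constant nonzero diagonal $-\mu_k$, so its graph is the full loop configuration $\gloop$, and the looped zero forcing bound gives $m_k\le Z(\gloop)$, which is part (2). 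At the spectral extremes $k\in\{1,\ell\}$ the shift is one-sided: $\ii\bA-\mu_1 I\succeq 0$ and $\mu_n I-\ii\bA\succeq 0$ are Hermitian positive semidefinite with graph $G$, so the PSD forcing bound yields $m_k\le Z_+(G)$, which is part (3). Finally, the central index occurs exactly when $\ell$ (equivalently $n$) is odd, where the eigenvalue is $0$ and $m_{(\ell+1)/2}=\nul(\bA)$; here $\bA$ is real with zero diagonal, every vertex is loopless, and skew forcing gives $\nul(\bA)\le Z_-(G)=Z(\gO)$, which is part (4).

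For the paired bound in part (5) I would stay over the reals and use $\bA^2+\mu_k^2 I$, a real symmetric matrix. Its kernel is spanned by the $\bA$-eigenspaces for eigenvalues $\ii\mu$ with $\mu^2=\mu_k^2$; for $\mu_k\ne 0$ these are precisely $\pm\ii\mu_k$, the distinct eigenvalues indexed by $k$ and $\ell+1-k$, so $\nul(\bA^2+\mu_k^2 I)=m_k+m_{\ell+1-k}$. Because $\bA$ has zero diagonal, the off-diagonal support of $\bA^2$ records length-$2$ walks, so the graph of $\bA^2+\mu_k^2 I$ is an edge configuration $H\in\confe(\Gamma(G,\{2\}))$. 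Therefore $m_k+m_{\ell+1-k}=\nul(\bA^2+\mu_k^2 I)\le Z(H)\le Z(\Gamma(G,\{2\}))$, proving part (5).

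The main obstacle is the single linear-algebraic point recurring in parts (2)--(4): the classical, PSD, and skew zero forcing inequalities are stated for real symmetric matrices, whereas $\ii\bA$ is genuinely Hermitian. I would resolve this by noting that each bound rests on the same combinatorial lemma --- a kernel vector vanishing on an (appropriately typed) zero forcing set must vanish everywhere --- and that this lemma uses only the symmetric zero pattern of the matrix together with the nonvanishing of entries on edges, so it transfers verbatim to Hermitian matrices over $\mathbb{C}$. A secondary bookkeeping issue is matching the loop type to the diagonal: a nonzero constant diagonal forces $\gloop$ in part (2), a zero diagonal forces $\gO$ and skew forcing in part (4), and positive semidefiniteness at the spectral extremes unlocks the stronger $Z_+(G)$ bound in part (3); keeping these case distinctions aligned with the hypotheses on $k$ is where care is required.
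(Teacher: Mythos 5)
Your proposal is correct and follows essentially the same route as the paper's proof: conjugate pairs for palindromicity, a spectral shift with nonzero diagonal bounded by $Z(\gloop)$, the PSD Hermitian matrices $\pm\ii(\bA-\lambda I)$ bounded by $Z_+(G)$, $\nul(\bA)\le Z_-(G)$ for the central multiplicity, and $\bA^2-\lambda_k^2 I$ viewed as an edge configuration of $\Gamma(G,\{2\})$ for the paired bound. Your explicit remark that the forcing bounds transfer from real symmetric to Hermitian matrices (since they depend only on the zero/nonzero pattern) makes precise a point the paper applies implicitly, but it is the same argument.
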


\begin{proof}
Item \ref{pal} follows from the fact that the eigenvalues of a real skew-symmetric matrix are purely imaginary and must come in conjugate pairs.

From the previous item, $\bA \in \calskew$ has 0 as an eigenvalue if $\ell$ is odd, in which case, the multiplicity of $0$ as an eigenvalue is given by $m_{\frac{\ell+1}{2}}$. Therefore, all other eigenvalues are non-zero, and their multiplicities are the nullity of $\bA - \lambda \bI$, which is bounded by $Z(\gloop)$ since the diagonal entries of $\bA - \lambda \bI$ are all nonzero. Similarly, the multiplicity of 0 is the nullity of $\bA$ which is bounded above by $Z_-(G)$. This gives items \ref{noncenterskew} and \ref{centerskew}.

For item \ref{psdskew}, note that for any matrix $\bA \in \calskew$, $\ii(-A + \lambda_1 I)$ an $\ii(A - \lambda_\ell I)$ are positive semi-definitive Hermitian matrices. It follows from \cite{zf_psd} that $Z_+(G)$ upper bounds $m_1$ and $m_\ell$.

For item \ref{mirrortwo}, since the two corresponding eigenvalues come in conjugate pairs, we can consider the matrix $(\bA - \lambda_1 \bI)(\bA - \overline{\lambda_1} \bI) =
\bA^2 - \lambda_1^2 \bI$ where the quantity $\lambda_1^2$ is necessarily real and negative. The underlying (simple) graph is necessarily an edge configuration of $\Gamma(G,\{2\})$.
\end{proof}

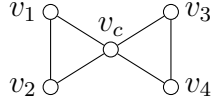
\begin{figure}[h]
    \centering
    \begin{tikzpicture}
    \node[label={above:$v_c$}] (0) at (0,0) {};
    \node[label={right:$v_3$}] (1) at (0.8,0.5) {};
    \node[label={right:$v_4$}] (2) at (0.8,-0.5) {};    
    \node[label={left:$v_1$}] (3) at (-0.8,0.5) {};
    \node[label={left:$v_2$}] (4) at (-0.8,-0.5) {}; 
    \draw (0) -- (1) -- (2) -- (0) -- (3) -- (4) -- (0);
    \end{tikzpicture}
    \caption{Bow-tie graph.}
    \label{fig:bowtie}
\end{figure}

\begin{lemma} \label{lem:bowtie}
Let $G$ be a simple graph. Suppose the induced subgraph $G[W]$ is a bow-tie (as shown in Figure~\ref{fig:bowtie}) for some $B = \{v_c, v_1, v_2, v_3, v_4\} \subseteq V(G)$ such that every path paths connecting the pairs $\{v_1,v_3\}$ and $\{v_2,v_4\}$ of length $3$ are through vertices in $S$. Then, for any $A\in\calskew$, either $\{v_1,v_3\}$ or $\{v_2,v_4\}$ is an edge of the underlying (simple) graph of $A^3-\lambda^2 A$.
\end{lemma}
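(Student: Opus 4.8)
The plan is to reduce the statement to the nonvanishing of a single off-diagonal entry of $A^3$, and then to settle that nonvanishing with a signature normalization followed by a sign argument. Writing $p(x)=x^3-\lambda^2 x$, we have $A^3-\lambda^2 A=p(A)$, and since $p$ has no constant and no quadratic term, the $(u,v)$-entry of $p(A)$ is exactly $(A^3)_{uv}-\lambda^2 A_{uv}$. For each of the cross pairs $\{v_1,v_3\}$ and $\{v_2,v_4\}$ (and also the complementary pairs $\{v_1,v_4\}$, $\{v_2,v_3\}$), the two vertices are non-adjacent in the induced bow-tie, so $A_{uv}=0$ and the entry collapses to $(A^3)_{uv}$. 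Thus it suffices to exhibit one such pair whose $A^3$-entry is nonzero.

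First I would normalize. Conjugating by a diagonal sign matrix $D=\operatorname{diag}(\pm 1)$ sends $A\in\calskew$ to $DAD\in\calskew$, preserving skew-symmetry, the underlying graph, and the zero/nonzero pattern of every power of $A$, hence of $A^3-\lambda^2 A$. I would choose $D$ (as in Example~\ref{ex:wheel}) so that all four spoke weights $A_{v_c v_i}$ become positive, leaving only the two rim weights $A_{v_1 v_2}$ and $A_{v_3 v_4}$ with free signs. Next I would compute the relevant entries by enumerating length-$3$ walks inside the bow-tie: the hypothesis that every length-$3$ path between the designated pairs stays in $B$, together with the fact that a skew-symmetric matrix has zero diagonal (no loops, no lazy steps), forces every length-$3$ walk between two non-adjacent outer vertices to be an honest path through $B$, since any repeated vertex would require one of the forbidden edges $v_1 v_3$ or $v_2 v_4$. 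A direct count then gives exactly two contributing walks for each cross pair, so each entry is a signed sum of two monomials in the six edge weights.

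The crux is ruling out cancellation, and a dichotomy on the sign of the rim product $A_{v_1 v_2}A_{v_3 v_4}$ does this. With the spokes positive, if the two rim weights have opposite signs then the two monomials composing $(A^3)_{v_1 v_3}$, and likewise those composing $(A^3)_{v_2 v_4}$, carry the same sign and cannot cancel, so both $\{v_1,v_3\}$ and $\{v_2,v_4\}$ are edges; if instead the rim weights share a sign, the identical argument applies to $(A^3)_{v_1 v_4}$ and $(A^3)_{v_2 v_3}$. The main obstacle I anticipate is exactly this bookkeeping: one must track the skew signs carefully to see which family of pairs is forced, and to reconcile the conclusion with the free labeling of the rim vertices $v_3,v_4$ (interchanging them swaps the two families, which is why a fixed statement naming only $\{v_1,v_3\}$ and $\{v_2,v_4\}$ should be read up to this relabeling).

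As a more robust, label-free finish I would instead show directly that the four cross-pair entries cannot all vanish. Setting the vanishing of all four equal to zero and eliminating the rim weights leads to a relation equating the square of one real spoke-product to the negative of another's square, which is impossible over the reals; hence at least one cross pair is always an edge of the underlying graph of $A^3-\lambda^2 A$. I would present the sign dichotomy as the primary route, since it engages the stated pairs most transparently, and keep the contradiction argument in reserve as a clean way to guarantee that \emph{some} cross pair is forced regardless of the sign configuration.
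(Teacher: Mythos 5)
Your proposal is correct, and its skeleton --- reduce to a nonzero off-diagonal entry of $A^3$ (since $\lambda^2 A$ contributes nothing at non-adjacent pairs), normalize by a signature similarity $DAD$, enumerate the two length-$3$ walks contributing to each cross-pair entry, and rule out cancellation by a sign dichotomy --- is the same as the paper's. The differences are that you normalize the four spokes (the paper instead normalizes the path $v_1v_2v_cv_3v_4$, i.e.\ $a_{12},a_{2c},a_{c3},a_{34}>0$), and, more importantly, that you run the dichotomy over all four non-adjacent outer pairs and explicitly flag that \emph{which} family of pairs is forced depends on a product of the two free signs, so the named conclusion only holds up to interchanging $v_3$ and $v_4$.

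That extra care is not cosmetic: it catches a sign slip in the paper's own proof. After normalizing $a_{34}>0$, skew-symmetry gives $a_{43}=-a_{34}<0$, but the paper treats $a_{43}$ as positive when it deduces $a_{1c}a_{c4}<0$ from $(A^3)_{13}=0$. Done correctly, $(A^3)_{13}=0$ and $(A^3)_{24}=0$ force the \emph{same} condition $a_{1c}a_{c4}>0$, so there is no contradiction between them, and they can in fact vanish simultaneously: taking every bow-tie weight equal to $1$ in the orientation $a_{12}=a_{1c}=a_{2c}=a_{c3}=a_{c4}=a_{34}=1$ gives $(A^3)_{13}=(A^3)_{24}=0$ while $(A^3)_{14}$ and $(A^3)_{23}$ are nonzero. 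So the lemma as literally stated fails for this matrix, whereas your label-free conclusion --- at least one of $\{v_1,v_3\}$, $\{v_2,v_4\}$, $\{v_1,v_4\}$, $\{v_2,v_3\}$ is an edge of the underlying graph of $A^3-\lambda^2 A$ --- is exactly what is true, and it is all the subsequent application uses (it only needs one nonzero off-diagonal entry to force $Z(H)<5$). One caveat to make explicit in a final write-up: your dichotomy (and the reserve argument equating a square to a negative square) computes $(A^3)_{14}$ and $(A^3)_{23}$ as two-term sums, so the path hypothesis must be read as covering all four outer pairs rather than only the two named ones; that is the reading you implicitly adopt, it is surely what is intended, and it is vacuous in the application where $G$ is the bow-tie itself.
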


\begin{proof}
For convenience, we label $v_1,\ldots,v_4$ and $v_c$ as $1,\ldots,4$ and $c$.  
Let $A = \begin{bmatrix}a_{ij}\end{bmatrix}$.  We may replace $A$ by $DAD$ for some signature matrix $D$ and assume that $a_{12}$, $a_{2c}$, $a_{c3}$, and $a_{34}$ are positive.  Since the $1,3$-entry of $A^3$ is 
\[a_{12}a_{2c}a_{c3} + a_{1c}a_{c4}a_{43},\]
we know that $a_{1c}a_{c4}<0$ if $(A^3)_{13} = 0$.  Meanwhile, the $2,4$-entry of $A^3$ is 
\[a_{2c}a_{c3}a_{34} + a_{21}a_{1c}a_{c4},\]
so $a_{1c}a_{c4}>0$ if $(A^3)_{24} = 0$.  Therefore, at least one of $(A^3)_{13}$ and $(A^3)_{24}$ is nonzero.  Since $\lambda^2 A$ contributes nothing to the $1,3$-entry nor the $2,4$-entry, $A^3 - \lambda^2 A$ has at least one nonzero off-diagonal entry. 
\end{proof}

\begin{example}
Consider the bow-tie graph in Figure~\ref{fig:bowtie}. If either of the lists $(2,1,2)$ or $(1,3,1)$ are possible for some $A \in \calskew$ its minimal polynomial is $x(x-\lambda)(x+\lambda) = x^3 - \lambda^2 x$, and so $A^3 - \lambda^2 A$ has nullity $5$ and is equal to $O$. However, Lemma~\ref{lem:bowtie} says that any underlying graph $H$ of $A^3 - \lambda^2 A$  must have an edge, in which case, $Z(H) < 5$, a contradiction. Hence, neither of the the multiplicity lists $(2,1,2)$ nor $(1,3,1)$ are possible.
\end{example}

Recall that the graph $\Gamma(G,\{1,3,5,\ldots,|S|\})$ is the multigraph formed on the vertex set of $G$ and the number of edges between $i$ and $j$ is the number of odd-length walks between them of at most length $|S|$.

\begin{lemma} \label{lem.oddwalk}
Let $A\in\calskew$ and $\lambda_1,\ldots,\lambda_\ell$ its eigenvalues with $\im(\lambda_1)<\cdots<\im(\lambda_\ell)$.  Suppose that $\ell$ is odd and let $c = \frac{\ell+1}{2}$.  Then for any set $S\subseteq \{1,\ldots,\ell\}$ such that $c\in S$ and ${\ell-i+1} \in S$ if and only if $i \in S$, the polynomial
% \[p(\bA) = \bA (\bA - \lambda_{s_1} \bI) (\bA + \lambda_{s_1} \bI) \cdots (\bA - \lambda_{s_{|S|}} \bI) (\bA + \lambda_{s_{|S|}} \bI)\]
\[p(A) = \prod_{i\in S}(A - \lambda_i I)\]
is skew-symmetric, and $p(A)$ is a matrix in $\mathcal{S}(H)$ for some edge configuration $H$ of $\Gamma(G,\{1,3,5,\ldots,|S|\}$.
\end{lemma}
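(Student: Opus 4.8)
The plan is to exploit the conjugate-pairing structure of the eigenvalues to factor $p$ as an \emph{odd} polynomial with real coefficients, from which both conclusions follow quickly. Since $A$ is real skew-symmetric, its eigenvalues are purely imaginary and, as recorded in Lemma~\ref{lem.listskew}, satisfy $\im(\lambda_{\ell-i+1}) = -\im(\lambda_i)$; being purely imaginary, this forces $\lambda_{\ell-i+1} = -\lambda_i$, while the central eigenvalue is $\lambda_c = 0$. Because $S$ is closed under $i \mapsto \ell-i+1$ and contains $c$, the multiset of roots $\{\lambda_i : i \in S\}$ is exactly $\{0\}$ together with pairs $\{\lambda_i, -\lambda_i\}$. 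Hence $p(x) = \prod_{i\in S}(x-\lambda_i) = x \prod_{\text{pairs}} (x^2 - \lambda_i^2)$. Since each $\lambda_i^2$ is a negative real number, $p$ has real coefficients and is an odd function; explicitly $p(x) = \sum_{k} c_k x^{2k+1}$ with $c_k \in \mathbb{R}$ and top degree equal to $|S|$ (which is odd, as $S$ is symmetric about $c$).

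First I would establish the skew-symmetry. Because $p$ has real coefficients we have $p(A)\trans = p(A\trans)$, and since $A\trans = -A$ with $p$ odd, this gives $p(A)\trans = p(-A) = -p(A)$. Thus $p(A)$ is skew-symmetric, which in particular forces its diagonal to vanish.

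Next I would identify the underlying graph. Writing $p(A) = \sum_k c_k A^{2k+1}$, every power appearing is odd and at most $|S|$. For $u\neq v$ the entry $(p(A))_{uv}$ is a real combination of the entries $(A^{2k+1})_{uv}$, each of which is a signed sum over walks from $u$ to $v$ of length exactly $2k+1$. Consequently, if $(p(A))_{uv}\neq 0$ then some walk of odd length at most $|S|$ joins $u$ and $v$, so there is an edge between $u$ and $v$ in $\Gamma(G,\{1,3,5,\ldots,|S|\})$; conversely, if no such odd walk of length at most $|S|$ exists, then $(p(A))_{uv} = 0$. Therefore the off-diagonal support of $p(A)$ selects, for each pair of vertices, at most one of the available edges of $\Gamma(G,\{1,3,5,\ldots,|S|\})$, which is precisely what is required for the underlying graph $H$ of $p(A)$ to be an edge configuration of $\Gamma(G,\{1,3,5,\ldots,|S|\})$. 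Since $p(A)$ is skew-symmetric with this exact pattern, it lies in $\calS(H)$ (read with the skew-symmetric convention of this section).

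The routine steps above hide one genuine subtlety, which I expect to be the only point needing care: cancellation. The entry $(p(A))_{uv}$ mixes walk-counts of several distinct odd lengths, so a priori these contributions could cancel. This is harmless for the claim, however, because cancellation can only turn a potentially nonzero entry into a zero one — it can never create a nonzero entry where $\Gamma(G,\{1,3,5,\ldots,|S|\})$ offers no edge — and an edge configuration is explicitly permitted to omit edges. Hence, no matter how the signs conspire, the support of $p(A)$ stays inside the allowed edge set, and the conclusion holds.
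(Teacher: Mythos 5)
Your proof is correct and takes essentially the same route as the paper's: both use the pairing $\lambda_{\ell-i+1}=-\lambda_i$ with $\lambda_c=0$ to write $p(A)=A\prod_{i\in S,\,i<c}\left(A^2-\lambda_i^2 I\right)$, deduce skew-symmetry from the resulting odd, real-coefficient structure, and conclude that a nonzero off-diagonal entry of $p(A)$ requires an odd walk in $G$ of length at most $|S|$. The one detail both treatments leave implicit is that entries corresponding to singleton edges of $\Gamma(G,\{1,3,\ldots,|S|\})$ must remain nonzero (which holds since such an entry is a single walk product times a strictly positive coefficient of $p$), so your cancellation remark matches the paper's level of rigor rather than falling short of it.
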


\begin{proof}
Since $\lambda_j = \lambda_{\ell - j + 1}$, 
\[p(\bA) = \prod_{i\in S}(A - \lambda_i I) 
= A\prod_{\substack{i\in S \\ i < c}}(A^2 - \lambda_i^2 I).\]
Since 
\[p(A)\trans = A\trans\prod_{\substack{i\in S \\ i < c}}((A^2)\trans - \lambda_i^2 I) = -A\prod_{\substack{i\in S \\ i < c}}((-A)^2 - \lambda_i^2 I) = -p(A),\]
the matrix $p(A)$ is a skew-symmetric matrix.
Further, if the $(a,b)$-entry of the matrix $p(\bA)$ is nonzero, then there must be a walk from $a$ to $b$ of odd-length in $G$ as at least one of the odd powers of $\bA$ must have a nonzero $(a,b)$ entry.
\end{proof}

\begin{lemma}\label{lem.oddskew}
Let $G$ be a graph and $A\in\calskew$.  Let $\lambda_1,\ldots,\lambda_\ell$ be the distinct eigenvalues of $A$ with $\im(\lambda_1)<\cdots<\im(\lambda_\ell)$.  Suppose that $\ell$ is odd and let $c = \frac{\ell+1}{2}$.  Then for any set $S\subseteq \{1,\ldots,\ell\}$ such that $c\in S$ and ${\ell-i+1} \in S$ if and only if $i \in S$, 
\[\sum_{j \in S}   m_j\leq Z_-(\Gamma(G,\{1,3,5,\ldots,|S|\}) .\]
\end{lemma}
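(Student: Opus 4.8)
The plan is to mirror the proof of Theorem~\ref{thm:main}, replacing the symmetric machinery by its skew-symmetric analogue supplied by Lemma~\ref{lem.oddwalk}. First I would set $p(x)=\prod_{i\in S}(x-\lambda_i)$ and invoke Lemma~\ref{lem.oddwalk}: because $c\in S$ and $S$ is symmetric under $i\mapsto \ell-i+1$, the matrix $p(A)$ is skew-symmetric and lies in $\mathcal{S}(H)$ for some edge configuration $H$ of $\Gamma(G,\{1,3,5,\ldots,|S|\})$. In particular $p(A)$ has zero diagonal, so $p(A)\in\mathcal{S}_-(H)$, the class of skew-symmetric matrices whose underlying graph is $H$.

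Next I would identify the nullity of $p(A)$ with the target sum. Since $A$ is real skew-symmetric it is normal, hence diagonalizable over $\mathbb{C}$, and $\mathbb{C}^n$ decomposes as the direct sum of the eigenspaces $E_{\lambda_1},\ldots,E_{\lambda_\ell}$. For an eigenvector $v\in E_{\lambda_j}$ we have $p(A)v=p(\lambda_j)v=\left(\prod_{i\in S}(\lambda_j-\lambda_i)\right)v$, which vanishes precisely when $j\in S$, the $\lambda_i$ being distinct. Hence $\ker p(A)=\bigoplus_{j\in S}E_{\lambda_j}$ and $\nul(p(A))=\sum_{j\in S}m_j$.

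Then I would run the two forcing inequalities. Since $p(A)\in\mathcal{S}_-(H)$, the skew-forcing bound on the maximum skew-symmetric nullity $M_-(H)$ (the same bound already used for the center multiplicity in Lemma~\ref{lem.listskew}) gives $\nul(p(A))\le M_-(H)\le Z_-(H)$. Finally, writing $\Gamma:=\Gamma(G,\{1,3,5,\ldots,|S|\})$, I would establish the monotonicity $Z_-(H)\le Z_-(\Gamma)$, the skew analogue of the multigraph remark $Z(H)\le Z(G)$: using $Z_-=Z(\cdot^{\,0})$, any force carried out on $\Gamma^{\,0}$ uses a singleton edge $i\sim_! j$ with $j$ the unique uncolored neighbor of $i$; that singleton edge survives in $H$, and the neighbors of $i$ in $H$ form a subset of those in $\Gamma$, so the same force is legal in $H^{\,0}$. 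Thus every skew-forcing set of $\Gamma$ is one of $H$, whence $Z_-(H)\le Z_-(\Gamma)$. Chaining the four steps yields $\sum_{j\in S}m_j\le Z_-(\Gamma(G,\{1,3,5,\ldots,|S|\}))$.

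The applications of Lemma~\ref{lem.oddwalk} and the nullity count are routine. I expect the only genuinely delicate point to be the monotonicity $Z_-(H)\le Z_-(\Gamma)$: one must verify it in the correct direction and take care that the skew color-change rule (a vertex may force without being colored) interacts properly with the restriction to singleton edges when passing from the multigraph $\Gamma$ to its edge configuration $H$. This is exactly the skew counterpart of the remarks preceding $\Zcheck$ and $\Zpcheck$, and I would phrase it in that style.
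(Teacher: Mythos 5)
Your proposal is correct and follows essentially the same route as the paper's proof: define $p(A)=\prod_{i\in S}(A-\lambda_i I)$, invoke Lemma~\ref{lem.oddwalk} to place $p(A)$ in $\calS(H)$ for an edge configuration $H$ of $\Gamma(G,\{1,3,5,\ldots,|S|\})$, identify $\nul(p(A))=\sum_{j\in S}m_j$, and bound this by the skew-forcing number. The only difference is that you spell out the steps the paper leaves implicit (the nullity computation via diagonalizability and the monotonicity $Z_-(H)\le Z_-(\Gamma)$ for edge configurations), and both are argued correctly.
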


\begin{proof}
For any $S$ with the given properties, the matrix 
\[p(A) = \prod_{i\in S}(A - \lambda_i I)\]
has nullity $\sum_{j \in S}   m_j$.  By Lemma~\ref{lem.oddwalk}, $p(A)$ is a matrix in $\mathcal{S}(H)$ for some edge configuration of $\Gamma(G,\{1,3,5,\ldots,|S|\}$.  Therefore, 
\[ \sum_{j \in S}   m_j \leq \nul(p(\bA)) \le Z_-(\Gamma(G,\{1,3,5,\ldots,|S|\}),\]
finishing the proof.
\end{proof}

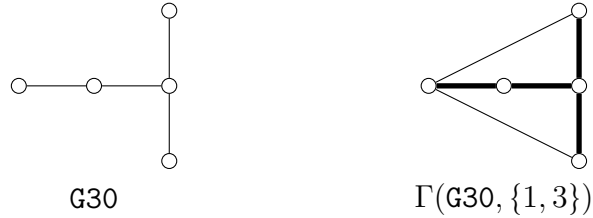
\begin{figure}[h]
    \centering
    \begin{tikzpicture}
    \node (0) at (0,0) {};
    \node (1) at (1,0) {};
    \node (2) at (2,0) {};
    \node (3) at (2,1) {};
    \node (4) at (2,-1) {};
    \draw (0) -- (1) -- (2);
    \draw (3) -- (2) -- (4);
    \node[rectangle,draw=none] at (1,-1.5) {\texttt{G30}};
    \draw[opacity=0] (1,-1.8) -- ++(0.1,0);
    \end{tikzpicture}
    \hfil
    \begin{tikzpicture}
    \node (0) at (0,0) {};
    \node (1) at (1,0) {};
    \node (2) at (2,0) {};
    \node (3) at (2,1) {};
    \node (4) at (2,-1) {};
    \draw[line width=2pt] (0) to (1);
    \draw[line width=2pt] (1) to (2);
    \draw[line width=2pt] (2) to (3);
    \draw[line width=2pt] (2) to (4);
    \draw (3) to (0);
    \draw (0) to (4);
    \node[rectangle,draw=none] at (1,-1.5) {$\Gamma(\mathtt{G30},\{1,3\})$};
    \draw[opacity=0] (1,-1.8) -- ++(0.1,0);
    \end{tikzpicture}
  \caption{The graph \texttt{G30} and the multigraph $\Gamma(\mathtt{G30}, \{1, 3\})$, where the thick edges denote multiedges.}
    \label{fig.examskewgraph1}
\end{figure}

\begin{example}
For an example of an application of Lemma \ref{lem.oddskew}, consider the tree $T$ in Figure~\ref{fig.examskewgraph1}, which is \texttt{G30} in \emph{An Atlas of Graphs} \cite{atlas}. By Lemma~\ref{lem.listskew}, we have that the multiplicities of the nonzero eigenvalues is bounded by $Z(\tloop) = 2$; however, $Z_-(T) = 1$, so the multiplicity of $0$ is bounded by $1$. As a result, Lemma~\ref{lem.listskew}, $(1,3,1)$ is not possible. Leaving two possible skew eigenvalue multiplicity lists: $(1,1,1,1,1)$ and $(2,1,2)$. 

However, we now discount $(2,1,2)$ using Lemma~\ref{lem.oddskew}. Observe that there are two pairs of vertices that are exactly distance 3 in $T$. In particular, it is not possible to realize the empty graph in $\Gamma(G, \{1,3\})$, and therefore, $Z_-(\Gamma(G, \{1,3\})<5$. Hence, by Lemma \ref{lem.oddskew} the list $(2,1,2)$ is not possible.
\end{example}

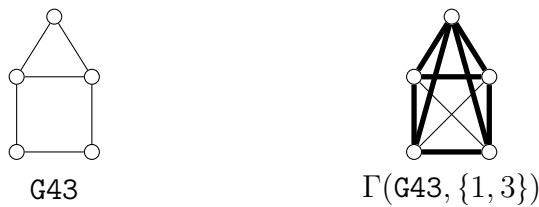
\begin{figure}[h]
    \centering
    \begin{tikzpicture}
    \node (0) at (0,0) {};
    \node (1) at (1,0) {};
    \node (2) at (0,1) {};
    \node (3) at (1,1) {};
    \node (4) at (0.5,1.8) {};
    \draw (0) -- (1) -- (3) -- (2) -- (0);
    \draw (2) -- (4) -- (3);
    \node[rectangle,draw=none] at (0.5,-0.5) {\texttt{G43}};
    \draw[opacity=0] (0.5,-0.8) -- ++(0.1,0);
    \end{tikzpicture}
    \hfil    
    \begin{tikzpicture}
    \node (0) at (0,0) {};
    \node (1) at (1,0) {};
    \node (2) at (0,1) {};
    \node (3) at (1,1) {};
    \node (4) at (0.5,1.8) {};
    \draw (0) -- (3);
    \draw (2) -- (1);
    \draw[line width=2pt] (0) to (1);
    \draw[line width=2pt] (0) to (2);
    \draw[line width=2pt] (0) to (4);
    \draw[line width=2pt] (1) to (3);
    \draw[line width=2pt] (1) to (4);
    \draw[line width=2pt] (2) to (3);
    \draw[line width=2pt] (2) to (4);
    \draw[line width=2pt] (3) to (4);
    \node[rectangle,draw=none] at (0.5,-0.5) {$\Gamma(\mathtt{G43},\{1,3\})$};
    \draw[opacity=0] (0.5,-0.8) -- ++(0.1,0);
    \end{tikzpicture}
    \caption{The graph \texttt{G30} and the multigraph $\Gamma(\mathtt{G43}, \{1, 3\})$, where the thick edges denote multiedges.}
    \label{fig.examskewgraph2}
\end{figure}

\begin{example}
Consider the graph $\mathtt{G45}$ in Figure~\ref{fig.examskewgraph2}.  We have $Z(\mathtt{G45}) = 2$ and $Z_-(\Gamma(\mathtt{G45},\{1,3\})) = 4$. By Lemma $\ref{lem.oddskew}$, the sum of three eigenvalue multiplicity (including the one for $0$) is at most 4. Therefore, $(2,1,2)$ and $(1,3,1)$ are not possible, and only $(1,1,1,1,1)$ is possible.
\end{example}

\subsection{Skew \texorpdfstring{IEP-$G$}{IEPG} on graphs with five vertices}

Just as with the IEP-$G$ on six vertices, we can apply our techniques to get a head start on the IEP-$G$ for skew-symmetric matrices on five vertices. We apply Lemma~\ref{lem.listskew}, Lemma~\ref{lem:bowtie}, and Lemma~\ref{lem.oddskew} to determine all possible multiplicity lists.

This method is able to determine all but three realizable multiplicity lists. Two of which are for the tree on five vertices. The complete table of realizable skew multiplicity lists can be found in \ref{tab:fivelists} and corresponding matrices are in  \ref{sec:skewrealize}.

\begin{theorem}[\cite{matchingskew}] 
\label{thm:matchingskew}
Let $G$ be a graph and let $m'(G)$ be the matching number of $G$. Then, the maximum rank over all matrices in $\calskew$ is exactly $2m'(G)$
\end{theorem}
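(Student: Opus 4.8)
The plan is to prove the two inequalities $\rank(A) \le 2m'(G)$ for every $A\in\calskew$ and $\rank(A) = 2m'(G)$ for a suitably generic $A\in\calskew$ separately, using the Pfaffian expansion as the bridge between linear algebra and matchings. The key structural fact I would invoke first is standard: for any skew-symmetric matrix $A$, the rank equals the largest order of a nonsingular principal submatrix, and since every odd-order skew-symmetric matrix is singular, this order is necessarily even. Write $\rank(A) = 2k$ and let $R\subseteq V(G)$ with $|R| = 2k$ be such that the principal submatrix $A[R]$ is nonsingular.

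For the upper bound, I would expand the Pfaffian $\operatorname{Pf}(A[R])$, whose square is $\det(A[R])\neq 0$. The Pfaffian is a signed sum over the perfect matchings of the complete graph on $R$, where the term indexed by a matching $\{\{i_1,j_1\},\ldots,\{i_k,j_k\}\}$ is $\pm a_{i_1 j_1}\cdots a_{i_k j_k}$. Because $a_{ij}=0$ whenever $\{i,j\}\notin E(G)$, any nonvanishing term corresponds to a perfect matching using only edges of $G[R]$. Hence $\operatorname{Pf}(A[R])\neq 0$ forces $G[R]$ to contain a perfect matching of size $k$, so $m'(G)\ge k$ and $\rank(A) = 2k \le 2m'(G)$.

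For the lower bound I would reverse the argument via a genericity dimension count. Let $M$ be a maximum matching with $|M| = m'(G)=k$, and let $R$ be the $2k$ vertices it covers. Regard the entries $a_{ij}$ for $\{i,j\}\in E(G)$ as free variables; then $\operatorname{Pf}(A[R])$ is a polynomial in these variables. The monomial $\prod_{\{i,j\}\in M} a_{ij}$ arising from $M$ occurs with coefficient $\pm 1$, and since distinct perfect matchings produce distinct monomials there is no cancellation, so the Pfaffian is a nonzero polynomial. The conditions ``$a_{ij}\neq 0$ for each edge'' and ``$\operatorname{Pf}(A[R])\neq 0$'' are each nonempty Zariski-open constraints on the space of edge weights, so their intersection is nonempty; choosing weights there yields $A\in\calskew$ with $A[R]$ nonsingular, hence $\rank(A)\ge 2k = 2m'(G)$. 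Combining the two bounds shows the maximum rank is exactly $2m'(G)$.

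The main obstacle I anticipate is the careful bookkeeping in the Pfaffian argument: one must verify that the Pfaffian expansion is genuinely indexed by perfect matchings (not arbitrary permutations) and, for the lower bound, that different matchings yield distinct monomials so that the monomial coming from $M$ cannot be cancelled. Once this combinatorial dictionary is pinned down, both inequalities follow cleanly, and the genericity step is routine since a nonzero polynomial cannot vanish on a Zariski-dense subset of the edge-weight space where all edges remain nonzero.
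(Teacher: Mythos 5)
The paper offers no proof of this theorem at all --- it is imported verbatim from \cite{matchingskew} --- so there is no internal argument to compare yours against; what can be said is that your proof is correct and is essentially the classical Pfaffian argument (going back to Tutte and Lov\'asz) for this fact. Both halves are sound. For the upper bound, the reduction to a nonsingular principal submatrix is legitimate because skew-symmetric (like symmetric) matrices have the principal-rank property --- the rank equals the largest order of a nonsingular principal submatrix --- but note this is a genuine prerequisite that fails for general matrices and deserves an explicit citation or a one-line proof (if the rows indexed by $S$ span the row space, then by skew-symmetry the columns indexed by $S$ span the column space, so $A[S,S]$ has rank $|S|$). The Pfaffian bookkeeping you flagged as the main obstacle works out exactly as you hope: $\operatorname{Pf}(A[R])$ is a signed sum over perfect matchings of $R$, any nonvanishing term forces all of its edges to lie in $G[R]$, and in the lower bound distinct perfect matchings contribute distinct squarefree monomials, so the monomial of your chosen maximum matching cannot cancel and the Pfaffian is a nonzero polynomial. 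The genericity step is also valid over $\mathbb{R}$, since a finite intersection of nonempty Zariski-open subsets of affine space is nonempty (and Euclidean-dense); if you prefer to avoid that language, a more elementary finish is to weight the matching edges by $1$ and all other edges by $\varepsilon$: at $\varepsilon=0$ the submatrix $A[R]$ is, up to simultaneous row and column permutation, a direct sum of blocks $\begin{pmatrix} 0 & 1 \\ -1 & 0 \end{pmatrix}$, hence nonsingular, and by continuity it remains nonsingular for all small $\varepsilon\neq 0$, yielding a matrix in $\calskew$ of rank exactly $2m'(G)$.
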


In particular, for a star (i.e., \texttt{G29}), $m'(T) = 1$, so the maximum rank is 2. As a result, the sum of all non-central multiplicities is 2, so $(1,1,1,1,1)$ and $(2,1,2)$ are not possible, and any multiplicity list must be $(1,3,1)$.

The other exceptional care is $K_{2,3} + e$ (see \ref{fig:k23e}, \texttt{G46}.

\begin{proposition} \label{prop:g46impossible}
For {\rm\texttt{G46}} the skew multiplicity list $(2,1,2)$ is not feasible. 
\end{proposition}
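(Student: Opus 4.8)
The plan is to argue by contradiction directly from the minimal polynomial, because the general skew-symmetric tools do not decide this case: the relevant configurations are all combinatorially realizable (for instance, the empty graph is an edge configuration of $\Gamma(\mathtt{G46},\{1,3\})$ and $K_5$ is one of $\Gamma(\mathtt{G46},\{2\})$), and \texttt{G46} has no induced bow-tie, so Lemma~\ref{lem:bowtie} does not apply. The main obstacle is therefore conceptual rather than computational: the obstruction to $(2,1,2)$ must come from a sign cancellation among the edge weights, not merely from which walks exist, so I would extract it by inspecting specific entries of a polynomial in $\bA$ and converting the resulting constraints into a rank bound.

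Concretely, I would suppose $\bA\in\calskew$ realizes $(2,1,2)$ and record two facts. First, since a real skew-symmetric matrix is normal, hence diagonalizable, and $(2,1,2)$ has exactly three distinct eigenvalues $\ii\mu,0,-\ii\mu$ with $\mu>0$, its minimal polynomial is $x(x-\ii\mu)(x+\ii\mu)=x^3+\mu^2x$, so $\bA^3+\mu^2\bA=O$. Second, the multiplicity of the eigenvalue $0$ is $1$, so $\nul(\bA)=1$ and $\rank(\bA)=4$.

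The key step is a short entrywise computation. Labeling $\bA$ by $X=\{x_1,x_2\}$ and $Y=\{y_1,y_2,y_3\}$ as in Figure~\ref{fig:k23e}, I would set $a=\bA_{x_1x_2}\neq 0$, $b_i=\bA_{x_1y_i}\neq 0$, $c_i=\bA_{x_2y_i}\neq 0$, and use that $Y$ is independent, so $\bA_{y_iy_j}=0$ for $i\neq j$. Each length-three walk from $y_i$ to $y_j$ must traverse $y_i\to x_1\to x_2\to y_j$ or $y_i\to x_2\to x_1\to y_j$, giving $(\bA^3)_{y_iy_j}=a(b_jc_i-b_ic_j)$. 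Reading off the $(y_i,y_j)$ entry of $\bA^3+\mu^2\bA=O$ and using $a\neq 0$ then forces $b_jc_i=b_ic_j$ for every pair $i\neq j$, that is, the vectors $(b_i,c_i)\in\mathbb{R}^2$ are pairwise parallel.

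Finally I would convert this into the promised contradiction. The $y_i$-column of $\bA$ is $(b_i,c_i,0,0,0)\trans$, so pairwise parallelism of the $(b_i,c_i)$ makes all three $Y$-columns scalar multiples of a single vector. Hence the column space of $\bA$ is spanned by the two $X$-columns and one further direction, giving $\rank(\bA)\le 3$; since a skew-symmetric matrix has even rank, in fact $\rank(\bA)\le 2$, contradicting $\rank(\bA)=4$. The only point needing care is the walk enumeration, namely verifying that no length-three $y_i$--$y_j$ walk can route through $Y$ in its middle step, which is exactly where independence of $Y$ is used; after that, the rank collapse is automatic.
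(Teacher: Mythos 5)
Your proof is correct, and it takes a genuinely different route from the paper's. The paper also argues by contradiction from the spectral identity (it normalizes $\mu=1$ so that $A(A^2+I)=O$), but it never inspects the $Y\times Y$ entries of $A^3$: instead it writes $A$ in block form with respect to $X\cup Y$, uses $\nul(A)=1$ to conclude that the $Y\times X$ block has full column rank, hence every kernel vector of $A$ vanishes on the $X$-coordinates; since the columns of $A^2+I$ lie in $\ker A$ and $A^2$ is symmetric, $A^2$ must then be block diagonal with $-I_2$ in the $X$-block, forcing $(A^2)_{x_1y_1}=0$ --- which is impossible because $x_1\to x_2\to y_1$ is the \emph{unique} length-two walk from $x_1$ to $y_1$, so that entry is a single nonzero product. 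You instead spend the two hypotheses in the opposite order: the minimal polynomial first, extracting $b_jc_i=b_ic_j$ from the $(y_i,y_j)$ entries of $A^3+\mu^2A=O$ over the independent set $Y$, and the nullity-one condition last, as the rank contradiction $\rank(A)\le 3<4$. Your route has the advantage of being a direct continuation of the sign-cancellation technique of Lemma~\ref{lem:bowtie} (odd-power entries over an independent set), so it fits the paper's toolkit more uniformly and avoids the block/kernel analysis entirely; the paper's route instead localizes the failure at the singleton edge of $\Gamma(\mathtt{G46},\{2\})$, in the spirit of its unique-shortest-walk arguments. Your preliminary observations --- that $\Gamma(\mathtt{G46},\{1,3\})$ admits the empty configuration, that the $\Gamma(\mathtt{G46},\{2\})$ bound only gives $m_1+m_3\le 4$, and that \texttt{G46} contains no induced bow-tie --- are accurate and correctly explain why an ad hoc argument is needed for this graph.
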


\begin{proof}
Let $G$ be the graph \texttt{G46}.  Suppose $A$ is a skew-symmetric matrix in $\calskew$ with its ordered multiplicity list $(2,1,2)$.  By replacing $A$ with $\frac{1}{\im(\lambda)}A$ if necessary, we may assume the spectrum of $A$ is $\{-\ii^{(2)}, 0, \ii^{(2)}\}$.  Let 
\[A = \begin{bmatrix} A_{11} & A_{12} \\ A_{21} & O \end{bmatrix},\]
where $A_{12} = -A_{21}\trans$.  

If $\rank(A_{21}) = 1$, then $\rank(A)\leq 3$, violating the fact that $0$ only has multiplicity $1$.  Hence we assume $\rank(A_{12}) = 2$.  Suppose $\bx =\begin{bmatrix} x_1 & x_2 & x_3 & x_4 & x_5\end{bmatrix}\trans$ is a vector such that $A\bx = \bzero$.  Then $x_1 = x_2 = 0$ since  
\[\begin{bmatrix} A_{12} & O \end{bmatrix}\bx = \bzero\]
and $A_{12}$ has full column-rank.  Thus, the first two entries of any vector in the kernel of $A$ is zero.  

By the spectrum of $A$, 
\[O = A(A + \ii I)(A - \ii I) = A(A^2 + I),\]
so the columns of $A$ are vectors in the kernel of $A$.  By the observation on vectors in the kernel of $A$ and the fact that $A^2 + I$ is symmetric, $A^2$ has the form 
\[\begin{bmatrix} -I_2 & O_{2,3} \\
 O_{3,2} & ? \end{bmatrix}.\]
 This means $-\ba_1\trans\ba_3 = 0$, where $\ba_j$ is the $j$-th column of $A$.  However, this is impossible since there is only one index, namely $2$, where both $\ba_1$ and $\ba_3$ are nonzero.
 \end{proof}

All remaining skew multiplicity lists can be found in the appendix with their realizations.

\section{Conclusion and Future Considerations}

The techniques developed and used in this article seem very promising, and we believe the results within here may just be the tip of the iceberg. We briefly pose possible directions for future considerations.

In Section~\ref{sec:power} it is mentioned that the number of distinct eigenvalues, $q(G)$ has been proven by Kim and Shader to be as large as $\frac98 |V(G)|$,  \cite{kim2009smith}. However, it has been speculated that $q(G)$ may be superlinear if not exponential in $|V(G)|$. We would hope that the method of zero forcing on powers of graphs, would shine brighter line on this problem.

In Section \ref{subsec:six}, it appears as though $\Zcheck^{(r)}(G)$ and $Z(\Gamma(G,2))$ are equal for small graphs. Indeed, for the classical zero forcing parameter $Z(G) = \Zhat(G) = M(G)$ for all graphs up to 7 vertices. However, it is not clear if this would hold for larger graphs. It would be interesting to find an example where equality does not hold.

Previous work on zero forcing and eigenvalue multiplicities was considered in \cite{rigid} using a newly defined variation of zero forcing: rigid linkage forcing. We ask: {\it Is there a concrete relationship between relation rigid linkage forcing and the zero forcing numbers of power of graphs?}

\bibliographystyle{plain}
\bibliography{Zpower-bib}

\appendix

\section{Ordered Multiplicity Lists for Skew-Symmetric Matrices on Graphs with 5 vertices} \label{sec:skewrealize}

Here, we provide all skew symmetric matrices that realize all possible multiplicity lists. With the exception of \texttt{G29} (the star graph on 5 vertices), (1,1,1,1,1) is possible by choosing a near-arbitrary matrix in $\calskew$. Hence, all cases of $(1,1,1,1,1)$ are omitted.

\begin{table}
\begin{center}
\begin{tabular}{|l|l|l|} \hline
Graph & Skew-Multiplicity lists\\ \hline
\texttt{G29}& (1,3,1) , \st{(1,1,1,1,1)}\\
\texttt{G30}& (1,1,1,1,1)\\
\texttt{G31}& (1,1,1,1,1)\\
\texttt{G34}& (1,1,1,1,1)\\
\texttt{G35}& (1,1,1,1,1)\\
\texttt{G36}& (1,1,1,1,1)\\
\texttt{G37}& (1,1,1,1,1)\\ % double check
\texttt{G38}& (1,1,1,1,1)\\
\texttt{G40}& (1,1,1,1,1)\\
\texttt{G41}& (1,1,1,1,1)\\
\texttt{G42}& (1,1,1,1,1)\\ 
\texttt{G43}& (1,1,1,1,1)\\
\texttt{G44}& (1,3,1) , (2,1,2) , (1,1,1,1,1)\\
\texttt{G45}& (2,1,2) , (1,1,1,1,1)\\
\texttt{G46}& (1,3,1) , \st{(2,1,2)} , (1,1,1,1,1)\\
\texttt{G47}& (2,1,2) , (1,1,1,1,1)\\
\texttt{G48}& (2,1,2) , (1,1,1,1,1)\\
\texttt{G49}& (2,1,2) , (1,1,1,1,1)\\
\texttt{G50}& (1,3,1) , (2,1,2) , (1,1,1,1,1)\\
\texttt{G51}& (1,3,1) , (2,1,2) , (1,1,1,1,1)\\
\texttt{G52 }& (1,3,1) , (2,1,2) , (1,1,1,1,1) \\ \hline
\end{tabular}
\end{center}
\caption{Table of all possible skew multiplicity lists for connected graphs on 5 vertices. \st{Strikethrough} lists are not feasible but require auxiliary results to our methods.}
\label{tab:fivelists}
\end{table}

\begin{itemize}
\item[]  \texttt{G42}, (2,1,2): not possible by Theorem~\ref{thm:matchingskew}.

\item[] \texttt{G44}, (1,3,1):
\[ \left(
\begin{array}{ccccc}
 0 & 0 & -1 & -1 & -1 \\
 0 & 0 & -1 & -1 & -1 \\
 1 & 1 & 0 & 0 & 0 \\
 1 & 1 & 0 & 0 & 0 \\
 1 & 1 & 0 & 0 & 0 \\
\end{array}
\right) \]

\item[] \texttt{G44}, (2,1,2):

\[ \left(
\begin{array}{ccccc}
 0 & 0 & 1 & 1 & 1 \\
 0 & 0 & \frac{1}{\sqrt{2}} & \sqrt{2} & -\sqrt{2} \\
 -1 & -\frac{1}{\sqrt{2}} & 0 & 0 & 0 \\
 -1 & -\sqrt{2} & 0 & 0 & 0 \\
 -1 & \sqrt{2} & 0 & 0 & 0 \\
\end{array}
\right) \]

\item[] \texttt{G45}: (2,1,2):

\[ \left(
\begin{array}{ccccc}
 0 & 1 & 2 & 1 & 0 \\
 -1 & 0 & 2 & -1 & 0 \\
 -2 & -2 & 0 & \frac{1}{2} & 0 \\
 -1 & 1 & -\frac{1}{2} & 0 & -\frac{3 \sqrt{3}}{2} \\
 0 & 0 & 0 & \frac{3 \sqrt{3}}{2} & 0 \\
\end{array}
\right)
\] 
\item[] \texttt{G46} (2,1,2): is not possible by Proposition \ref{prop:g46impossible}.

\noindent and \texttt{G46} (1,3,1):

\[ \left(
\begin{array}{ccccc}
 0 & 0 & 0 & 1 & 1 \\
 0 & 0 & 0 & 1 & 1 \\
 0 & 0 & 0 & 1 & 1 \\
 -1 & -1 & -1 & 0 & 1 \\
 -1 & -1 & -1 & -1 & 0 \\
\end{array}
\right) \]

\item[] \texttt{G47} (2,1,2):

\[ \left(
\begin{array}{ccccc}
 0 & 1 & 0 & 0 & 1 \\
 -1 & 0 & -1 & -\frac{1}{\sqrt{3}} & -\frac{1}{\sqrt{3}} \\
 0 & 1 & 0 & -1 & 0 \\
 0 & \frac{1}{\sqrt{3}} & 1 & 0 & -\frac{2}{\sqrt{3}} \\
 -1 & \frac{1}{\sqrt{3}} & 0 & \frac{2}{\sqrt{3}} & 0 \\
\end{array}
\right)\]

\item[] \texttt{G47} (2,1,2):

\[ \left(
\begin{array}{ccccc}
 0 & 1 & 0 & 0 & -\sqrt{2} \\
 -1 & 0 & \frac{3}{2} & 0 & \frac{1}{2} \\
 0 & -\frac{3}{2} & 0 & 1 & -\frac{1}{\sqrt{2}} \\
 0 & 0 & -1 & 0 & 1 \\
 \sqrt{2} & -\frac{1}{2} & \frac{1}{\sqrt{2}} & -1 & 0 \\
\end{array}
\right) \]

\item[] \texttt{G48} (2,1,2):

\[ \left(
\begin{array}{ccccc}
 0 & 0 & 1 & 1 & 1 \\
 0 & 0 & \frac{1}{2} & -2 & \frac{1}{2} \\
 -1 & -\frac{1}{2} & 0 & 0 & -\sqrt{\frac{5}{2}} \\
 -1 & 2 & 0 & 0 & 0 \\
 -1 & -\frac{1}{2} & \sqrt{\frac{5}{2}} & 0 & 0 \\
\end{array}
\right)
\] 

\item[] \texttt{G49} (2,1,2):

\[ \left(
\begin{array}{ccccc}
 0 & 0 & -\frac{3 \sqrt{3}}{2} & 1 & 1 \\
 0 & 0 & 0 & 1 & -2 \\
 \frac{3 \sqrt{3}}{2} & 0 & 0 & 1 & -\frac{1}{2} \\
 -1 & -1 & -1 & 0 & -\sqrt{3} \\
 -1 & 2 & \frac{1}{2} & \sqrt{3} & 0 \\
\end{array}
\right) \] 

\item[] \texttt{G50} (2,1,2):

\[ \left(
\begin{array}{ccccc}
 0 & 1 & 0 & 1 & 1 \\
 -1 & 0 & 1 & 0 & 1 \\
 0 & -1 & 0 & -1 & 1 \\
 -1 & 0 & 1 & 0 & -1 \\
 -1 & -1 & -1 & 1 & 0 \\
\end{array}
\right) \]

\item[] \texttt{G50} (1,3,1): \[  \left(
\begin{array}{ccccc}
 0 & 1 & 0 & 1 & 1 \\
 -1 & 0 & 1 & 0 & 1 \\
 0 & -1 & 0 & -1 & -1 \\
 -1 & 0 & 1 & 0 & 1 \\
 -1 & -1 & 1 & -1 & 0 \\
\end{array}
\right) \]

\item[] \texttt{G51} (2,1,2):

\[ \left(
\begin{array}{ccccc}
 0 & 1 & 0 & 1 & 1 \\
 -1 & 0 & 1 & 1 & -1 \\
 0 & -1 & 0 & -\frac{1}{3} & -\frac{1}{3} \\
 -1 & -1 & \frac{1}{3} & 0 & \frac{4}{3} \\
 -1 & 1 & \frac{1}{3} & -\frac{4}{3} & 0 \\
\end{array}
\right) \]

\noindent and \texttt{G51} (1,3,1):

\[ \left(
\begin{array}{ccccc}
 0 & 0 & 1 & 1 & -1 \\
 0 & 0 & 1 & 1 & -1 \\
 -1 & -1 & 0 & 1 & 1 \\
 -1 & -1 & -1 & 0 & 2 \\
 1 & 1 & -1 & -2 & 0 \\
\end{array}
\right) \]

\item[] \texttt{G52} (2,1,2):

\[
\left(
\begin{array}{ccccc}
 0 & a & b & c & 1 \\
 -a & 0 & 1 & 1 & 1 \\
 -b & -1 & 0 & 1 & 1 \\
 -c & -1 & -1 & 0 & 1 \\
 -1 & -1 & -1 & -1 & 0 \\
\end{array}
\right)
\]

where $a\approx -0.249038$ the negative root to $x^4-4x-1$, $b \approx 1.35219$ is the positive root to $x^4 +2x^2 -7$, and $c\approx-1.66325$ is the negative root to $x^4+4x-1$.

\noindent and \texttt{G52} (1,3,1):

\[ \left(
\begin{array}{ccccc}
 0 & 1 & 1 & \frac{1}{2} & -1 \\
 -1 & 0 & 1 & 1 & 1 \\
 -1 & -1 & 0 & \frac{1}{2} & 2 \\
 -\frac{1}{2} & -1 & -\frac{1}{2} & 0 & \frac{3}{2} \\
 1 & -1 & -2 & -\frac{3}{2} & 0 \\
\end{array}
\right) \]
\end{itemize}

\end{document}